\newtheorem{theorem}{Theorem}[section]
\newtheorem{lemma}[theorem]{Lemma}
\newtheorem{proposition}[theorem]{Proposition}
\newtheorem{definition}{Definition}[section]
\newtheorem{remark}{Remark}[section]
\newtheorem{notation}{Notation}
\def\eq#1{(\ref{#1})}
\def\nn{\nonumber}
\def\({\left(\begin{array}{cccccc}}
\def\){\end{array}\right)}
\def\eq#1{(\ref{#1})}
\def\nn{\nonumber}
\def\({\left(\begin{array}{cccccc}}
\def\){\end{array}\right)}
\def\bes{\begin{eqnarray}}
\def\ees{\end{eqnarray}}
\newcommand{\beq}{\begin{equation}}
\newcommand{\eeq}{\end{equation}}
\newcommand{\bea}{\begin{eqnarray}}
\newcommand{\eea}{\end{eqnarray}}
\newcommand{\beann}{\begin{eqnarray*}}
\newcommand{\eeann}{\end{eqnarray*}}
\newcommand{\eps}{\ensuremath{\varepsilon}}
\newcommand{\RR}{\mathbb{R}}
\DeclareMathOperator{\sgn}{sgn}
\DeclareMathOperator{\grad}{grad}
\DeclareMathOperator{\dv}{div}
\numberwithin{equation}{section}
\begin{document}

\title{Amplitude blowup in compressible Euler flows without shock formation}

\begin{abstract}
	Recent works have demonstrated that continuous self-similar radial Euler flows can
	drive primary (non-differentiated) flow variables to infinity at the center of motion. 
	Among the variables that blow up at collapse is the pressure, and it is unsurprising that 
	this type of behavior can generate an outgoing shock wave. 
	
	In this work we prove that there is an alternative scenario in which an incoming, 
	continuous 3-d flow suffers blowup, including in pressure,
	and yet remains continuous beyond collapse. We verify that this behavior is possible
	even in cases where the fluid is everywhere moving toward the center of motion at time of collapse.
	The results underscore the subtlety of shock formation in multi-dimensional flow.

\bigskip
	\noindent
{\bf Key words.} Compressible fluid flow, multi-d Euler system, similarity solutions, radial symmetry, unbounded solutions

\noindent
{\bf AMS subject classifications.} 35L45, 35L67, 76N10, 35Q31
\end{abstract}

\author{Helge Kristian Jenssen }\address{H.~K.~Jenssen, Department of
Mathematics, Penn State University,
University Park, State College, PA 16802, USA ({\tt
jenssen@math.psu.edu}).}

\date{\today}
\maketitle

\tableofcontents

\section{Introduction}\label{intro}
The non-isentropic Euler system describes the time evolution of 
a compressible fluid in the absence of viscosity and heat conduction:
\begin{align}
	\rho_t+\dv_{\bf x}(\rho \bf u)&=0 \label{mass_m_d_full_eul}\\
	(\rho{\bf  u})_t+\dv_{\bf x}[\rho {\bf  u}\otimes{\bf  u}]+\grad_{\bf  x} p&=0
	\label{mom_m_d_full_eul}\\
	(\rho E)_t+\dv_{\bf x}[(\rho E+p){\bf  u}]&=0.\label{energy_m_d_full_eul}
\end{align}
The independent variables are time $t$ and position ${\bf  x}\in\RR^n$ ($n=2,3$), and the 
dependent variables are density $\rho$, fluid velocity ${\bf  u}$, 
and specific internal energy $e$; the total energy density is $E=e+\textstyle\frac{1}{2}|{\bf  u}|^2$. 
We assume the fluid is an ideal gas with adiabatic index $\gamma>1$, so that the pressure 
$p$ is given by
\beq\label{pressure1}
	p(\rho,e)=(\gamma-1)\rho e,
\eeq
and also polytropic, i.e., the specific internal energy is proportional to 
the absolute temperature $\theta$ of the gas.
The sound speed $c$ is then given by
\beq\label{sound_speed}
	c=\sqrt{\textstyle\frac{\gamma p}{\rho}}=\sqrt{\gamma(\gamma-1)e}\propto\sqrt\theta.
\eeq
The unknowns $\rho$, $\bf u$, $p$, $c$, $\theta$ are referred to as primary (i.e., undifferentiated) 
flow variables.

In what follows we specialize to radial flows, i.e., solutions to 
\eq{mass_m_d_full_eul}-\eq{energy_m_d_full_eul} 
where the flow variables depend on position only through $r=|{\bf x}|$, 
and the velocity field is purely radial, i.e., 
\[{\bf u}(t,{\bf x})=u(t,r)\textstyle\frac{\bf x}{r}\]
Choosing $\rho$, $u$, $c$ as dependent variables, the Euler system 
\eq{mass_m_d_full_eul}-\eq{energy_m_d_full_eul} reduces to
\begin{align}
	\rho_t+u\rho_r+\rho(u_r+\textstyle\frac{(n-1)u}{r}) &= 0\label{m_eul}\\
	u_t+ uu_r +\textstyle\frac{1}{\gamma\rho}(\rho c^2)_r&= 0\label{mom_eul}\\
	c_t+uc_r+{\textstyle\frac{\gamma-1}{2}}c(u_r+\textstyle\frac{(n-1)u}{r})&=0\label{ener_eul}
\end{align}
where $r>0$, $\rho=\rho(t,r)$, $u=u(t,r)$, and $c=c(t,r)$. 
In continuous Euler flow, the specific entropy $S$, specified via Gibbs' relation 
$de=\theta dS-pd(\frac{1}{\rho})$, is transported along particle paths. For continuous 
radial flows we therefore have
\beq\label{spec_entr}
	S_t+uS_r=0.
\eeq

Pioneering works by Guderley \cite{gud} and Landau-Stanyukovich \cite{stan} in the 1940s initiated 
the study of {\em self-similar} radial Euler flows. Self-similarity provides a radical simplification,
reducing \eq{m_eul}-\eq{ener_eul} to a system of three non-linear ODEs.
A key observation is that the sub-system of ODEs for the similarity variables $V$ and $C$
corresponding to $u$ and $c$, yields a single, autonomous ODE relating $V$ and $C$ 
(cf.\ \eq{CV_ode} below). The upshot is that one can effectively analyze various types of
self-similar Euler flows by studying a planar phase portrait. An important motivation 
for studying this type of solutions in detail is that they exhibit different types of singular behaviors. 
The present work concerns one aspect of this.

While the study of self-similar Euler solutions has generated a 
considerable literature by now (see, e.g.\ references in \cite{jj}), it is only lately that 
rigorous results have been obtained. Indeed, the existence of the
converging-expanding shock flows originally studied by Guderley and Landau-Stanyukovich, 
has been established in a strict mathematical sense only very recently \cite{jls1}. 

Another scenario, describing the collapse of a spherical cavity, was analyzed
by Hunter \cite{hun_60}; see also \cite{bk}. Building on earlier works, Lazarus \cite{laz} provided a detailed,
and partly numerical, analysis of the Guderley and Hunter solutions.

A different type of focusing solutions - the setting of the present work - 
is provided by {\em continuous} flows in which a converging wave 
collapses and blows up at the center of motion. 
One motivation for considering this latter type of solutions is to
clarify the blowup mechanism. Specifically, that blowup is a pure focusing 
effect that can occur even in the presence of an
everywhere positive pressure field (in contrast to the flows considered 
by Guderley, Landau-Stanyukovich, and Hunter.) 
A first, rigorous construction of continuous blowup was done in 
\cite{jt2} for the simplified isothermal model: 
A continuous (but not $C^1$) wave converges toward the origin, collapses
(implodes), and reflects off an expanding shock wave.
A similar construction in \cite{jt3} (depending on numerically drawn phase portraits) 
indicated the same type of behavior for the isentropic model.

A far more challenging task was accomplished in \cite{mrrs1} where {\em smooth} ($C^\infty$)
self-similar blowup solutions were constructed (up to collapse) for the first time; see also \cite{bcg}. 
In turn, these were used to generate nearby blowup solutions of the compressible
Navier-Stokes system in \cite{mrrs2} - a fundamental result in fluid dynamics. Recently, 
\cite{ccsv} shows how non-radial perturbations 
of the smooth imploding Euler profiles from \cite{mrrs1} provide examples of vorticity 
blowup in compressible flow.

In a different direction, the work \cite{jj} considered self-similar Euler flows 
exhibiting another type of blowup behavior. These solutions remain locally bounded 
near the center of motion while several of the flow variables suffer gradient blowup
at time of collapse. Somewhat surprisingly,  \cite{jj} provides numerical 
evidence that such a scenario does not necessarily 
lead to shock formation in the ensuing flow, at least for sufficiently large 
values of the adiabatic index ($\gamma\gtrapprox10$). 
The main objective of the present work is to prove that this
phenomenon, i.e., absence of shock formation, can occur even in 
the more singular case of amplitude blowup (for any $\gamma>1$).

The challenging general problem of analyzing multi-d shock formation and propagation 
has been treated in a number of recent works, cf. Section 1.3.1.\ in \cite{ccsv}
for a review.
The present work illustrates one subtlety of shock formation, or rather lack thereof,
by focusing (pun intended) attention on non-generic, converging-diverging
symmetric flows.

\subsection{Main result and outline}
It is well-known that the radial Euler system \eq{m_eul}-\eq{mom_eul}-\eq{ener_eul} 
admits self-similar solutions, see \cites{laz,gud,cf,sed,stan,rj}.
We follow the setup in \cites{laz,cf} and posit 
\beq\label{sim_vars}
	x=\textstyle\frac{t}{r^\lambda},\qquad \rho(t,r)=r^\kappa R(x),\qquad 
	u(t,r)=-\frac{r^{1-\lambda}}{\lambda}\frac{V(x)}{x}, \qquad 
	c(t,r)=-\frac{r^{1-\lambda}}{\lambda}\frac{C(x)}{x},
\eeq
where the similarity parameters $\kappa$ and $\lambda$ are a priori free.
Our main findings are as follows:
\begin{theorem}\label{thm}
	Consider the 3-dimensional compressible Euler system 
	\eq{mass_m_d_full_eul}-\eq{energy_m_d_full_eul} 
	for an ideal gas \eq{pressure1} with adiabatic index $\gamma>1$, and 
	let the similarity parameter $\kappa$ have the ``isentropic'' value 
	$\bar \kappa=-\textstyle\frac{2(\lambda-1)}{\gamma-1}$. 
	
	Then, for any $\lambda>1$ sufficiently close to $1$ (depending on $\gamma$), the Euler system
	\eq{mass_m_d_full_eul}-\eq{energy_m_d_full_eul} admits radially symmetric, 
	self-similar solutions of the form \eq{sim_vars} with the following properties:
	\begin{itemize}
		\item[(i)] They are defined on all of $\RR_t\times \RR^3_{\bf x}$ and continuous
		except at the single point $(t,{\bf x})=(0,0)$;
		\item[(ii)] At time $t=0$  the density, velocity, pressure, sound speed,
		and temperature all tend to infinity as the origin ${\bf x}=0$ is approached;
		\item[(iii)] The solutions contain locally finite amounts of mass, momentum, and energy
		at all times, and describe globally isentropic, shock-free  flows;
		\item[(iv)] There are solutions with the properties {\em (i)-(iii)} where in addition the fluid
		is everywhere flowing toward the center of motion at time $t=0$. Similarly, there are other solutions 
		satisfying (i)-(iii) in which the fluid is everywhere flowing away from the center at time $t=0$. 
	\end{itemize}
\end{theorem}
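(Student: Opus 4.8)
The plan is to construct the desired solutions by analyzing the reduced ODE system obtained by substituting the self-similar ansatz \eq{sim_vars} into \eq{m_eul}-\eq{ener_eul}, and then exploiting the special structure forced by the isentropic choice $\kappa=\bar\kappa$. With this value of $\kappa$ the entropy similarity profile is constant, so the full non-isentropic system collapses to the isentropic one, and the $(V,C)$ sub-dynamics decouples into a single autonomous planar ODE $\frac{dC}{dV}=F(V,C)$ (the equation labeled \eq{CV_ode} referenced in the text). First I would set up this phase plane carefully: identify the relevant critical points, the sonic/parabolic line where the flow transitions between sub- and supersonic similarity speed, and in particular the point(s) through which a trajectory must pass to yield a globally defined flow on all of $\RR_t\times\RR^3_{\bf x}$. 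The key is that for $\lambda=1$ the system degenerates to an explicitly integrable situation (or a trivial/known one), so for $\lambda>1$ close to $1$ one can perturb off this degenerate picture.

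Next I would carry out a perturbative/continuation argument in the parameter $\lambda$ near $\lambda=1$. At $\lambda=1$ one should be able to write down explicitly (or identify from the known literature, e.g.\ \cite{laz,cf}) a trajectory in the $(V,C)$-plane that corresponds to a continuous global flow; the task is to show that this trajectory persists, with the required qualitative features, for $\lambda$ slightly larger than $1$. This requires: (a) checking that the relevant critical point remains of the right type (node vs.\ saddle vs.\ spiral) under the perturbation — an eigenvalue computation of the linearization — so that a trajectory enters/leaves it along the correct direction; (b) verifying the trajectory crosses the sonic line smoothly (analyticity of the flow across the sonic point is what gives genuine regularity rather than a weak-solution patch), which typically forces a specific choice among the two similarity exponents or a matching condition; and (c) tracking the trajectory globally from $x=-\infty$ (spatial infinity / early times) to $x=0^-$ and beyond, to confirm it does not blow up prematurely and that the recovered $\rho,u,c$ via \eq{sim_vars} are finite and continuous away from $(0,0)$.

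Then I would translate the phase-plane trajectory back into statements about the physical flow to verify (i)-(iv). Property (i) follows from global existence of the trajectory plus the scaling in \eq{sim_vars} (the factor $r^{1-\lambda}$ with $\lambda>1$ is what produces a singularity only at $r=0$, $t=0$). Property (ii) — blowup of all primary variables at $t=0$, ${\bf x}\to0$ — comes from examining the $r\to0$ asymptotics at fixed $t=0$, i.e.\ $x\to\pm\infty$ behavior of $R,V,C$ combined with the prefactors $r^\kappa$ and $r^{1-\lambda}/x$; with $\bar\kappa=-\frac{2(\lambda-1)}{\gamma-1}<0$ and $1-\lambda<0$ one reads off that $\rho,c\to\infty$, hence $p\propto\rho c^2\to\infty$ and $\theta\propto c^2\to\infty$. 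Property (iii) requires integrating the local densities of mass ($\rho r^{n-1}$), momentum, and energy near $r=0$ and near $r=\infty$ and checking convergence of these integrals — a computation controlled entirely by the exponents $\kappa$, $\lambda$, $\gamma$ — together with the observation that globally constant entropy and smooth crossing of the sonic line preclude shocks (Lax-type entropy conditions are vacuous when the flow is continuous and isentropic). Property (iv) is a sign condition on $u$ at $t=0$: since $u(t,r) = -\frac{r^{1-\lambda}}{\lambda}\frac{V(x)}{x}$, the sign of $u$ at $t=0$ is governed by the sign of $\lim_{x\to\pm\infty} V(x)/x$ along the trajectory, and I expect the phase portrait to contain (for suitable $\lambda$) both a trajectory on which this limit has one sign everywhere and another on which it has the opposite sign — reflecting ingoing versus outgoing waves.

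The main obstacle I anticipate is step (b)–(c): controlling the trajectory \emph{globally} and, above all, proving the \emph{smooth} (analytic) passage through the sonic line uniformly for $\lambda$ in a one-sided neighborhood of $1$. Near the sonic line the ODE $\frac{dC}{dV}=F(V,C)$ has a singularity (denominator vanishing), so a generic trajectory picks up only limited regularity there; obtaining a $C^\infty$ (or analytic) flow demands that the trajectory hit the sonic line at a special point where numerator and denominator vanish simultaneously, and that the two branches match to infinite order — this is exactly the kind of delicate sonic-point analysis that made \cite{mrrs1} difficult, though here the isentropic reduction and the closeness to the integrable case $\lambda=1$ should make it tractable via a careful implicit-function / normal-form argument at the sonic point, plus a compactness/monotonicity argument to keep the trajectory in a bounded region of the plane for all relevant $x$.
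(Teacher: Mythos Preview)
Your proposal has two genuine gaps. First, you have the similarity variable backwards at collapse: since $x=t/r^\lambda$, the limit $t\to0$ with $r>0$ fixed gives $x\to0$, not $|x|\to\infty$. Thus the blowup in (ii) and the sign of $u(0,r)$ in (iv) are governed by the behavior of $(V(x),C(x))$ near the \emph{origin} $P_1=(0,0)$ in the $(V,C)$-plane (specifically by the finite limits $\nu=\lim_{x\to0}V(x)/x$ and $\omega=\lim_{x\to0}C(x)/x$, so that $u(0,r)=-\tfrac{\nu}{\lambda}r^{1-\lambda}$), while $|x|\to\infty$ corresponds to $r\to0$ at fixed $t\neq0$ and is what makes the flow bounded there. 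Your analysis of (ii) and (iv) via ``$\lim_{x\to\pm\infty}V(x)/x$'' would therefore not work. Second, you misidentify the main obstacle. The theorem claims only \emph{continuity}, not $C^\infty$; the sonic-line crossing is not a delicate infinite-order matching \`a la \cite{mrrs1} but happens automatically at the triple points $P_8$, $P_9$ where $F=G=D=0$ simultaneously. Once these are shown to be nodes (an eigenvalue check, as you say), \emph{any} trajectory reaching them crosses $L_\pm$ continuously.

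The actual core difficulty, which your outline misses, is to show that a suitable trajectory connects $P_8$ to $P_9$ through $P_1$ in the first place. The paper does \emph{not} perturb off an explicit $\lambda=1$ solution (indeed at $\lambda=1$ the critical points degenerate: $V_*=0$, $\bar\kappa=0$). Instead it singles out the unique trajectory leaving $P_1$ \emph{vertically} and traps it between two explicit parabolic barriers $\Pi_1=\{V=-\beta_1C^2\}$ and $\Pi_2=\{V=-\beta_2C^2\}$ passing through $P_5$ and $P_9$ respectively; the required inequalities ($\beta_1>\beta>\beta_2$ and the correct crossing signs on $\Pi_1,\Pi_2$) are checked by Taylor expansion in $\mu=\lambda-1$, and it is precisely here that $n=3$ is needed (the inequality $\beta>\beta_2$ fails for $n=2$). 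The trajectory is then forced into a trapping ``eye'' region and must reach the node $P_9$. Part (iv) follows by perturbing this vertical trajectory to nearby $P_8P_9$-connections that pass through $P_1$ with finite nonzero slope; the sign of that slope determines the sign of $u(0,\cdot)$.
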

The rest of the paper is organized as follows. Section \ref{setup} provides the setup 
and records the similarity ODEs. These involve 
the space dimension $n$ and the adiabatic constant $\gamma$, as well as two the
similarity parameters $\kappa$ and $\lambda$. (Throughout we mostly follow the notation
in \cite{laz}.) We also
impose three constraints (C1)-(C3). The first requires locally finite amounts of the 
conserved quantities (cf.\ (iii) above); (C2) prescribes that blowup should occur at a single 
point in space-time (chosen as the origin); and (C3) concerns the types of the relevant 
critical points in the plane of self-similar variables. We also record some standard 
observations about the similarity ODEs and their critical points, as well as 
the existence of a (well-known) exact integral. Finally, it is recorded that 
(C1)-(C3) implies a particular value of one of the similarity 
parameters ($\kappa=\bar\kappa$), and it is verified that this in turn implies 
that the flow is globally isentropic.

Section \ref{crit_pts} records 
the critical points of the similarity ODEs and analyzes their presence in the case
of isentropic flow. The implications of constraints (C1)-(C3) are 
analyzed in Sections \ref{lam_kap_constrs} and \ref{presence_type}. 
Section \ref{lam_kap_constrs} argues for the aforementioned ``isentropic'' 
value of $\kappa=\bar\kappa$ (this argument was also presented in \cites{jj,jt4}), and
Proposition \ref{constraints_C1_C_2_and _2nd_C3} 
summarizes the relevant implications of this choice.

Section \ref{presence_type} deals with the arguments for
how the second similarity parameter $\lambda$ must be further restricted in terms 
of $n$ and $\gamma$ in order to satisfy constraint (C3). 
This takes the form of a series of increasingly restrictive upper bounds on $\lambda$.
Although we do not evaluate analytically all of these constraints,
we show that they are all met whenever $\lambda>1$ is sufficiently close to $1$
(for $n$ and $\gamma$ fixed).
This will suffice for our needs.
Section \ref{further_props} provides further properties of the critical points;
see Proposition \ref{locn_of_crit_points} for a summary. Up to this point, most 
of the analysis is standard.

The main result, i.e.\ existence of radial self-similar Euler flows 
that suffer amplitude blowup at a single space-time point without
generating a shock wave, is proved in Section \ref{global_flow}.
The is accomplished by exploiting one particular trajectory of
the similarity ODEs, viz.\ the one that passes {\em vertically} through the
origin in the $(V,C)$-plane. An argument based on barriers demonstrates 
that for $n=3$ and any $\gamma>1$, 
this trajectory joins the two nodal points $P_8$ and $P_9$ via the origin $P_1$, 
whenever $\lambda>1$
is sufficiently close to $1$. In turn, $P_8$ and $P_9$ are connected by trajectories
to two critical points at infinity ($P_{+\infty}$ and $P_{-\infty}$, respectively).
Joining the various trajectories, we obtain a global solution of the similarity ODEs, 
and this  yields the sought-for globally defined, shock-free 3-d Euler flow as 
described in Theorem \ref{thm}.

The analysis will show that the particular trajectory under consideration
approaches the nodes $P_8$ and $P_9$ along their primary direction. 
As a consequence there are nearby trajectories
that, while still connecting $P_8$ to $P_9$, pass {\em non-vertically} 
through the origin. These ``perturbed'' trajectories will have either positive
or negative slopes at the origin in the $(V,C)$-plane. As a consequence, there are 
3-d Euler flows which suffer amplitude blowup at $(t,r)=(0,0)$, 
while remaining continuous everywhere else, 
and with the fluid at time $t=0$ everywhere moving either toward or away from
the center of motion (cf., part (iv) of Theorem \ref{thm}).

The fact that blowup (including of density) can occur also when all particles 
are moving away from the origin might seem counterintuitive. 
It is perhaps more surprising that a shock is not necessarily 
generated even when the pressure blows up at the center of motion and all fluid 
particles move toward the origin at time of collapse.
These issues are briefly discussed in Section \ref{remarks_1}.

Finally, in Section \ref{remarks_2}, we consider the issue of non-uniqueness (our original 
motivation). Namely, with blowup solutions of the type discussed above, one is really 
pushing the Euler system to (or beyond) its limits as a physical model. 
It is reasonable to ask if such solutions could be used to give examples
of non-uniqueness. With solutions as described by Theorem \ref{thm},
where in particular the pressure field becomes unbounded at a point, 
it would be reasonable to expect that an expanding shock should form. So, could it be
that, upon freezing time at $t=0$, the data of the constructed blowup solutions in Theorem \ref{thm} 
also admit a propagation for $t>0$ which is discontinuous, and thus distinct from the
solution described in Theorem \ref{thm}? Section \ref{remarks_2} outlines 
an analysis, based on the Rankine-Hugoniot relations, which indicates 
(but does not conclusively prove) that the answer is negative.

\begin{remark}
	Concerning vacuum formation (vanishing of the density field) we have that:
	\begin{itemize}
		\item The solutions described in Theorem \ref{thm} do not exhibit
		vacuum formation; in particular, the density remains strictly 
		positive at the center of motion for all times.
		On the other hand, vacuum is approached asymptotically as $r\to\infty$ at all times,
		and also along the center of motion as $t\to\pm\infty$; see Remarks 
		\ref{far_field_asymp} and \ref{no_vacuum}.
		\item This is in contrast to Guderley shock solutions, i.e., radial, self-similar
		Euler flows in which an incoming shock propagates into a quiescent fluid near $r=0$ 
		for $t<0$, \cites{gud,laz}. For such solutions, a one-point
		vacuum is present at the center of motion $r=0$ for all times 
		following collapse; see \cite{jt1} (observation (O2), Section IV.C)
		and also the numerical plots in  \cite{laz} (Figures 8.27–8.28). 
		A one-point vacuum also occurs at and after collapse
		in the solutions considered in \cite{jj}.
	\end{itemize}
\end{remark}

\section{Similarity variables and similarity ODEs}\label{setup}
The overall goal is to demonstrate existence of
solutions to \eq{m_eul}-\eq{mom_eul}-\eq{ener_eul}, 
with certain properties, for all times $t\in(-\infty,\infty)$ and all $r>0$. This will be 
accomplished by building suitable solutions to the system of similarity ODEs
\eq{V_sim2}-\eq{C_sim2} obtained by substituting the ansatz \eq{sim_vars} into 
\eq{m_eul}-\eq{mom_eul}-\eq{ener_eul}. 
As we seek globally defined Euler flows, we need to 
build solutions of the similarity ODEs \eq{V_sim2}-\eq{C_sim2} that are defined for 
all $x\in(-\infty,\infty)$.

We shall impose several constraints on the solutions of interest to us,
and this will severely restrict the possible values that $\kappa$ and 
$\lambda$ can take.
These constraints are:
\begin{enumerate}
	\item[(C1)] The total amounts of mass, momentum, and energy
	in any fixed ball about the origin in physical space should remain finite 
	at all times.
	\item[(C2)] The solutions are to suffer amplitude blowup 
	at the center of motion at time of 
	collapse $t=0$, and at no other place or time.
	In particular, $\rho(t,r)$, $u(t,r)$, $c(t,r)$ (and 
	hence also $\theta(t,r)$ and $p(t,r)$, cf.\ \eq{pressure1}-\eq{sound_speed})
	should remain bounded as $r\downarrow0$ whenever $t\neq0$.
	\item[(C3)] Two of the critical points of the similarity ODEs 
	\eq{V_sim2}-\eq{C_sim2} (points $P_8$ and $P_9$ in what follows) 
	should be present as nodes, while its two critical points at infinity 
	($P_{\pm\infty}$) should be saddles that are approached as $|x|\to\infty.$
\end{enumerate}
The precise implications of (C1)-(C3) will be detailed in later sections.
For now we note that (C1) is a minimal requirement for physicality, 
while (C2) is imposed to focus attention on highly singular solutions 
that suffer amplitude blowup of primary flow variables. 
All solutions we consider will be such that both $\frac{V(x)}{x}$ and 
$\frac{C(x)}{x}$ tend to finite limits as $x\to 0$. By sending $t\to0$ with 
$r>0$ fixed in \eq{sim_vars}, we will therefore  have 
\beq\label{uc}
	u(0,r), c(0,r)\propto r^{1-\lambda}.
\eeq
In particular, as (C2) requires blowup of $u(0,r), c(0,r)$ as $r\downarrow0$, 
we shall only consider $\lambda>1$. On the other hand, we shall exploit the 
fact that blowup occurs whenever $\lambda>1$: our main result on 
continuous blowup will be established for $\lambda$ sufficiently close to $1$
(depending on $\gamma$).

Constraint (C3) is imposed in order to argue that there
are unique trajectories of the reduced similarity ODE (\eq{CV_ode} below)
connecting the critical points $P_8$ and $P_9$ to its critical points $P_{\pm\infty}$ 
at infinity. It will also guarantee, for $\lambda\gtrsim1$, that there are infinitely many trajectories of the 
similarity ODEs connecting $P_8$ and $P_9$ via the origin in the $(V,C)$-plane.

We note that for Guderley solutions, the similarity parameter $\kappa$ must necessarily vanish 
(see p.\ 318 in \cite{laz}). In contrast, for the continuous flows considered in this work, 
(C1)-(C3) will imply that $\kappa$ must take a 
non-zero value which is given in terms of $\lambda$ and $\gamma$ (cf.\ \ref{isentr_kappa}). 
This turns out to imply that the flows we construct are in fact isentropic:
the specific entropy is globally constant in space and time, cf.\ Section \ref{lam_kap_constrs}.

\begin{remark}
	Theorem \ref{thm} is formulated for 3-dimensional flows with spherical 
	symmetry. However, most of the analysis applies also to the case of cylindrical
	symmetry $n=2$. As will be clear from what follows, 
	it is only one part of the final barrier argument
	in Section \ref{prop_A} that requires $n=3$. It would be of interest to know if
	the extra dimension is really required for the blowup behavior described 
	in Theorem \ref{thm}.
\end{remark}

\subsection{Similarity ODEs and reduced similarity ODE}\label{ss_odes}
The self-similar ansatz \eq{sim_vars} 
reduces \eq{m_eul}-\eq{mom_eul}-\eq{ener_eul}
to a system of three {\em similarity ODEs} for $R(x)$, $V(x)$, $C(x)$.
A calculation shows that these are given by ($'\equiv\frac{d}{dx}$)
\begin{align}
	(1+V)R'+RV'&=\textstyle\frac{\kappa+n}{\lambda x}RV \label{R_sim1}\\
	C^2R'+\gamma R(1+V)V'+2RCC'&=
	\textstyle\frac{1}{\lambda x}[\gamma(\lambda+V)V+(\kappa+2)C^2]R \label{V_sim1}\\
	\textstyle\frac{\gamma-1}{2}CV'+(1+V)C'&=\textstyle\frac{1}{\lambda x}
	[\lambda+(1+\textstyle\frac{n(\gamma-1)}{2})V]C, \label{C_sim1}
\end{align}
where we observe that \eq{C_sim1} does not involve $R$.
An essential observation, due Guderley \cite{gud}, is that $R$ can be 
eliminated from \eq{V_sim1} by using \eq{R_sim1}. We thus obtain two  
ODEs for only $V$ and $C$. Solving these for $V'$ and $C'$ yields
\begin{align}
	V'&=-\textstyle\frac{1}{\lambda x}\frac{G(V,C)}{D(V,C)}\label{V_sim2}\\
	C'&=-\textstyle\frac{1}{\lambda x}\frac{F(V,C)}{D(V,C)},\label{C_sim2}
\end{align}
with 
\begin{align}
	D(V,C)&=(1+V)^2-C^2\label{D}\\
	G(V,C)&=nC^2(V-V_*)-V(1+V)(\lambda+V)\label{G}\\
	F(V,C)&=C\big[C^2\big(1+\textstyle\frac{\alpha}{1+V}\big)
	-k_1(1+V)^2+k_2(1+V)-k_3\big],\label{F}
\end{align}
where
\beq\label{V_*}
	V_*=\textstyle\frac{\kappa-2(\lambda-1)}{n\gamma},
\eeq
\beq\label{alpha}
	\alpha=\textstyle\frac{1}{\gamma}[(\lambda-1)+\frac{\kappa}{2}(\gamma-1)],
\eeq
and
\beq\label{ks}
	k_1=1+{\textstyle\frac{(n-1)(\gamma-1)}{2}},\qquad 
	k_2={\textstyle\frac{(n-1)(\gamma-1)+(\gamma-3)(\lambda-1)}{2}},\qquad
	k_3=\textstyle\frac{(\gamma-1)(\lambda-1)}{2}.
\eeq
We note that 
\beq\label{sum_k}
	k_1-k_2+k_3=\lambda.
\eeq
Finally, \eq{V_sim2}-\eq{C_sim2} give the single {\em reduced similarity ODE}
\beq\label{CV_ode}
	\frac{dC}{dV}=\frac{F(V,C)}{G(V,C)}
\eeq
which relates $V$ and $C$ along self-similar Euler flows.

\subsection{Strategy and preliminary observations}\label{strat_prelim}
We start by noting that, although the phase plane analysis of \eq{CV_ode}
is central to the analysis, what is really required for building
Euler flows of the form \eq{sim_vars},  are suitable solutions 
of \eq{V_sim2}-\eq{C_sim2}. And the latter system is not equivalent
to the single ODE \eq{CV_ode}: In contrast to \eq{CV_ode}, the system 
\eq{V_sim2}-\eq{C_sim2} degenerates along the {\em critical (sonic) lines}
\beq\label{crit_lines}
	L_\pm:=\{C=\pm(1+V)\},
\eeq
across which the numerator $D$ in \eq{V_sim2}-\eq{C_sim2} changes sign.
A direct calculation verifies that
\beq\label{non_obvious_reln}
	F(V,\pm(1+V))\equiv \mp\textstyle\frac{(\gamma-1)}{2}G(V,\pm(1+V)),
\eeq
i.e., $F$ and $G$ are proportional along $L_\pm$.
Therefore, if a trajectory $\Gamma$ of the reduced similarity ODE \eq{CV_ode} 
crosses one of the critical lines 
$L_\pm$ at a point $P$ where one (and hence both) of $F$ and $G$ 
is non-zero, then the flow of \eq{V_sim2}-\eq{C_sim2} along $\Gamma$ is directed in 
opposite directions on either side of the critical line at $P$. The upshot is that 
such a trajectory 
$\Gamma$ cannot be used to generate a physically meaningful Euler flow. 

This implies a drastic reduction of the set of relevant trajectories: Any continuous crossing 
of one of the critical lines $L_\pm$ by a trajectory of  \eq{CV_ode} must occur at 
a ``triple point'' where $F=G=D=0$. As we shall see, the existence of triple points off the $V$-axis
amounts to the statement that the aforementioned critical points $P_8$ and $P_9$ 
are present.

\subsubsection{Strategy and an exact integral}\label{strat_exact_int}
We can now be somewhat more precise about the general strategy for building 
continuous radial self-similar Euler flows. Assuming we have identified a suitable solution
$C(V)$ of the reduced similarity ODE \eq{CV_ode} (in particular, crossing $L_\pm$ 
at triple points), we solve \eq{V_sim2}
with $C=C(V)$ to obtain $V=\hat V(x)$. Assuming further that this yields a global 
solution (i.e, $\hat V(x)$ is defined for all $x\in(-\infty,\infty)$), we obtain 
a globally defined $\hat C(x):=C(\hat V(x))$. In turn, $\hat V(x)$ and $\hat C(x)$
define, via \eq{sim_vars}, the flow variables $u(t,r)$ and $c(t,r)$ in an Euler flow.

To determine the density field $\rho(t,r)$, we may solve 
\eq{R_sim1} (with $V=\hat V(x)$) for $R=\hat R(x)$.
However, the similarity ODEs \eq{R_sim1}-\eq{C_sim1} admit an 
exact ``adiabatic'' integral (see Eqn.\ (2.7) in \cite{laz}, or pp.\ 319-320 in \cite{rj}).
This may be deduced as follows. For an ideal polytropic gas, the specific entropy
$S$ is a function of $\rho^{1-\gamma}\theta\propto \rho^{1-\gamma}c^2$. 
For a self-similar flow \eq{sim_vars} it follows that
\beq\label{S_sigma}
	S=\text{function of }r^{-2\gamma\alpha}\sigma(x)\qquad \text{where}\quad
	\sigma(x):=R(x)^{1-\gamma}(\textstyle\frac{C(x)}{x})^2,
\eeq
and $\alpha$ is given by \eq{alpha}.
Equation \eq{spec_entr} therefore becomes
\[(1+V)\sigma'+2\gamma\alpha\textstyle\frac{V}{\lambda x}\sigma=0.\]
Multiplying through by $R$, and substituting for $\frac{RV}{\lambda x}$ from 
the right-hand side of \eq{R_sim1}, results in
\[R(1+V)\sigma'+\textstyle\frac{2\gamma\alpha}{\kappa+n}[R(1+V)]'\sigma=0.\]
It follows that
\beq\label{adiab_int}
	[R|1+V|]^qR(x)^{1-\gamma}(\textstyle\frac{C(x)}{x})^2\equiv const. >0,
\eeq  
where 
\beq\label{q}
	q=\textstyle\frac{2\gamma\alpha}{\kappa+n}\equiv
	\frac{1}{\kappa+n}[\kappa(\gamma-1)+2(\lambda-1)].
\eeq 
For later use, we record the following special case:
When $\kappa$ takes the value
\beq\label{isentr_kappa}
	\bar\kappa=\bar\kappa(\gamma,\lambda):=-\textstyle\frac{2(\lambda-1)}{\gamma-1},
\eeq
the constants $\alpha$ and $q$ vanish, and \eq{adiab_int} reduces to
\beq\label{CR}
	(\textstyle\frac{C(x)}{x})^2R(x)^{1-\gamma}\equiv const. >0.
\eeq
According to \eq{S_sigma}, this means that the specific entropy $S$ in this case
takes a constant value throughout any continuous 
part of the flow. Thus, when we construct a globally continuous self-similar Euler 
flow \eq{sim_vars} with $\kappa=\bar\kappa$, we necessarily obtain an isentropic flow
(i.e., $S$ takes on a constant value globally in space-time). 
We therefore refer to $\bar\kappa$ as the {\em isentropic} $\kappa$-value. We
shall see in Section \ref{lam_kap_constrs} that this value is in fact a 
consequence of the constraints (C1)-(C3) above.

\subsubsection{Preliminaries about critical points}\label{prelim_crit_points}
To build our self-similar flows we need to analyze the critical points of \eq{CV_ode} 
in some detail. These are the common zeros of the functions $F$ and $G$, 
and their number and locations depend on $n$, $\gamma$, $\lambda$, 
$\kappa$. It turns out that some of these are necessarily located
on the critical lines (cf.\ \eq{crit_lines}); they thus provide triple 
points where trajectories of \eq{CV_ode} can cross $L_\pm$.

The zero-levels of $F$ and $G$ are denoted
\[\mathcal F=\{(V,C)\,|\,F(V,C)=0\}\qquad\mathcal G=\{(V,C)\,|\,G(V,C)=0\},\]
and we note that $V=V_*$ (cf.\ \eq{V_*}) is a vertical asymptote for $\mathcal G$.
Finally, we record the symmetries
\beq\label{symms}
	G(V,-C)=G(V,C)\qquad\text{and}\qquad F(V,-C)=-F(V,C).
\eeq
In particular, trajectories of \eq{CV_ode} in the $(V,C)$-plane are located 
symmetrically about the $V$-axis, and critical points off the $V$-axis 
come in symmetric pairs.

\section{Critical points of the reduced similarity ODE}\label{crit_pts}
A complete recording of the critical points of \eq{CV_ode},
for all values of $(n,\gamma,\lambda,\kappa)$,  was recently 
given in \cite{jj}, and we shall only review the relevant findings.
We assume $n=2$ ($m=1$) or $n=3$ ($m=2$) and $\gamma>1$. Until 
further notice, $\lambda$ and $\kappa$ are considered as free parameters.

There are up to nine critical points of \eq{CV_ode}, and these come in 
two groups (notation as in \cites{jj,laz}):
$P_1$-$P_3$ located along the $V$-axis, and three pairs $P_4$-$P_5$, 
$P_6$-$P_7$, and $P_8$-$P_9$, with each pair of points located off and,
according to \eq{symms}, symmetrically about the $V$-axis.
There are two further critical points at infinity, viz.
\beq\label{p+-}
	P_{\pm\infty}:=(V_*,\pm\infty).
\eeq
The points $P_{\pm\infty}$,  analyzed in Section \ref{P_infs} below, 
are central to the construction of continuous Euler 
flows: These are the states that the relevant solutions $(V(x),C(x))$ of 
\eq{V_sim2}-\eq{C_sim2} tend to as $x\to\mp\infty$, respectively.

\subsection{Critical points on the $V$-axis}\label{P1_P3}
From \eq{G}-\eq{F} we have $F(V,0)\equiv0$ and $G(V,0)=V(1+V)(\lambda+V)$.
Therefore, there are in general three critical points located on the $V$-axis, viz.\
\[P_1=(0,0)\qquad P_2=(-1,0)\qquad P_3=(-\lambda,0),\] 
of which only $P_1$ is relevant in the present work. 
The linearization of 
\eq{CV_ode} at $P_1$ is $\frac{dC}{dV}=\frac C V$. Thus, $P_1$ is a 
star point: Whenever $\mathcal L$ is a straight half-line from 
$P_1$, there is a unique trajectory $(V,C(V))$ of \eq{CV_ode} that 
approaches $P_1$ tangent to $\mathcal L$ (cf.\ Theorem 3.6, pp.\ 218-219 in \cite{hart}).

We need to analyze how the corresponding solutions $(V(x),C(x))$ of the system
\eq{V_sim2}-\eq{C_sim2} behave as $P_1$ is approached. 
Assume $\mathcal L$ has slope $\ell$, so that $C(V)\approx \ell V$ for $V\approx 0$
along the solution in question. Substitution into \eq{V_sim2}-\eq{C_sim2}, and 
use of \eq{sum_k}, show that
\[\frac{dV}{dx}\approx \frac V x \qquad\text{and}\qquad \frac{dC}{dx}\approx \frac C x
\qquad\text{as $P_1$ is approached.}\]
It follows that any solution to \eq{V_sim2}-\eq{C_sim2} which 
approaches $P_1$ must do so as $x\to 0$. Furthermore, the limits
\beq\label{well_bhvd}
	\nu=\lim_{x\to0}{\textstyle\frac{V(x)}{x}}\qquad\text{and}\qquad
	\omega=\lim_{x\to0}\textstyle\frac{C(x)}{x}\qquad\text{exist as finite numbers, 
	and $\ell=\textstyle\frac{\omega}{\nu}$.}
\eeq
	With these notations we can specify precisely the velocity and sound speed
	at time of collapse in the corresponding Euler flow (cf.\ \eq{uc}). Sending $t\to0$
	with $r>0$ fixed, \eq{sim_vars} and \eq{well_bhvd} give
	\beq\label{uc_at_collapse}
		u(0,r)= -\textstyle\frac{\nu}{\lambda}r^{1-\lambda}
		\qquad\text{and}\qquad
		c(0,r)=-\textstyle\frac{\omega}{\lambda} r^{1-\lambda}.
	\eeq
We note that $\omega=0$ gives $\ell=0$, which corresponds to the two flat
trajectories $C(V)\equiv 0$, for $V\gtrless0$, of \eq{CV_ode}. 
Also, the limiting case $\nu=0$ gives
$\ell=\pm\infty$, which corresponds to the two trajectories of \eq{CV_ode} that 
reach $P_1$ vertically. 
The latter trajectories will be of particular interest to us.

\begin{remark}\label{far_field_asymp}
	The behavior of $C(x)$ as $x\to0$ yields information about the 
	asymptotic behavior of the sound speed and density in the far field as $r\to\infty$. 
	In particular, with $\kappa=\bar\kappa$ (isentropic flow) we obtain from 
	\eq{sim_vars}${}_4$ and \eq{well_bhvd}${}_2$ that
	\beq\label{far_field}
		\rho(t,r)\propto c(t,r)^\frac{2}{\gamma-1}\sim (\textstyle\frac{|\omega|}{\lambda})^\frac{2}{\gamma-1}r^{\bar\kappa}
		\qquad\text{as $r\to\infty$.}
	\eeq
	Since $\bar\kappa<0$ for $\lambda>1$, this shows that the density decays to zero 
	in the far field for the solutions described in Theorem \ref{thm}.
\end{remark}

We next analyze how 
solutions to \eq{V_sim2}-\eq{C_sim2} pass through $P_1$.
Recall that the sound speed $c$ is non-negative by definition
(cf.\ \eq{sound_speed}), and that we only consider positive $\lambda$-values
(in fact, $\lambda>1$). It therefore follows from \eq{sim_vars}${}_4$
that a solution $(V(x),C(x))$ of \eq{V_sim2}-\eq{C_sim2} which passes through the
origin must do so by moving from the upper half-plane $\{C>0\}$ to the lower half-plane
$\{C<0\}$ as $x$ increases from negative to positive values.
In particular, this shows that $\omega\leq0$. In fact, since $\omega=0$
corresponds to the flat trajectories $C(V)\equiv 0$ ($V\gtrless0$), we have 
$\omega<0$ for any trajectory considered in the following.

On the other hand, the sign of $\nu$ depends on whether the origin $P_1$
is approached from within the 1st or 2nd quadrant in the $(V,C)$-plane. For later reference
we note that, according to \eq{uc_at_collapse}${}_1$, the sign of $\nu$ determines 
the direction of the flow at time of collapse $t=0$:
\begin{itemize}
	\item Case 1: If $\ell=\frac{\omega}{\nu}>0$, i.e., the solution 
	approaches $P_1$ from within the 1st quadrant, then $\nu<0$ and $u(0,r)>0$ 
	for all $r>0$. Thus $\ell>0$ corresponds to an everywhere diverging flow at 
	time of collapse.
	\item Case 2:  If $\ell=\frac{\omega}{\nu}<0$, i.e., the solution 
	approaches $P_1$ from within the 2nd quadrant, then $\nu>0$ and $u(0,r)<0$ 
	for all $r>0$. Thus $\ell<0$ corresponds to an everywhere converging flow at 
	time of collapse.
\end{itemize}
These observations will be relevant for  part (iv) of Theorem \ref{thm}; 
see Section \ref{part_iv}.
\begin{remark}
	As we shall see below, the solutions $(V(x),C(x))$ we construct
	connect the critical point $P_8$ to the origin $P_1$.
	Since $P_8$ will be located in the 2nd quadrant (see Proposition
	\ref{locn_of_crit_points}), we obtain that if Case 1 occurs, then the solution 
	trajectory must cross the $C$-axis on its way from $P_8$ to $P_1$.
	Thus, $V(x)$ must vanish for some $x=\bar x<0$, which means that the
	corresponding flow exhibits stagnation, i.e., vanishing velocity, along the path
	$\bar r(t)=(\frac{t}{\bar x})^\frac{1}{\lambda}$ prior to collapse ($t<0$).
	
	Similarly, in Case 2, the flows we consider will necessarily exhibit stagnation 
	after 	collapse.
\end{remark}

The issue of blowup with a diverging or converging flow at time of collapse
is addressed further in Section \ref{remarks_1}.

\subsection{Critical points off the $V$-axis}\label{P4_P9}
The critical points $P_i=(V_i,C_i)$, $i=4,\dots,9$, are identified by 
first solving for $C^2$ in terms of $V$ from $G(V,C)=0$, i.e.,
\beq\label{g}
	C^2=g(V):=\textstyle\frac{V(1+V)(\lambda+V)}{n(V-V_*)},
\eeq
and then using the result in $F(V,C)=0$. 
This gives a cubic equation for $V$ which has one real root
$V_4=V_5$ for any choice of $n,\gamma,\lambda,\kappa$. The remaining 
$V$-quadratic has real roots $V_6=V_7$ and $V_8=V_9$ only in certain 
cases, and these give the critical points $P_6$-$P_9$ (when present).

Referring to \cite{jj} for details, we only record the results.
For labeling we follow Lazarus \cite{laz} and let $P_4$, $P_6$, $P_8$ be 
the critical points located in the upper half-plane $\{C>0\}$, while the 
points $P_5$, $P_7$, $P_9$, respectively, are located symmetrically about 
the $V$-axis in the lower half-plane.

The points $P_4=(V_4,C_4)$ and $P_5=(V_4,-C_4)$ are given by
\beq\label{V_4_C_4}
	V_4=-\textstyle\frac{2\lambda}{2+n(\gamma-1)},\qquad
	C_4=\sqrt{g(V_4)}.
\eeq
Note that $g(V_4)$ may be negative, in which case $P_4$ and $P_5$ 
are not present. However, we shall see that they are necessarily present whenever 
the constraints (C1)-(C3) are met (see Lemma \ref{pres_locn}).

It is convenient to set 
\[V_6=V_7\equiv V_-\qquad\text{and} \qquad V_8=V_9\equiv V_+.\] 
A calculation shows that these are given by
\begin{align}
	&V_\pm=\textstyle\frac {1}{2 m \gamma} 
	\Big[(\gamma-2)\mu +\kappa-m\gamma\pm\sqrt{(\gamma-2)^2\mu^2
	-2[\gamma m(\gamma+2)-\kappa(\gamma-2)]\mu+(\gamma m+\kappa)^2}\Big],\label{V_pm}
\end{align}
where $m=n-1$, and $\mu=\lambda-1$.
\begin{notation}\label{mmueps}
	To shorten some of the expressions in what follows, we will freely use either
	$n$ or $m$, $\lambda$ or $\mu$, and $\gamma$ or $\eps$, always assuming
	\[m=n-1,\qquad \mu=\lambda-1,\qquad \eps=\gamma-1.\]
	Note that our standing assumptions imply $m\geq1$, $\mu>0$, and $\eps>0$.
\end{notation}
It follows from \eq{g} and \eq{V_pm} that the critical points $P_6$-$P_9$ are present if and 
only if, first, the radicand in \eq{V_pm} is positive, and second, $g(V_\pm)>0$. 
If so, we get  the critical points 
\beq\label{P_6-P_9}
	P_6=(V_-,C_-), \qquad P_7=(V_-,-C_-), \qquad P_8=(V_+,C_+), \qquad 
	\text{and}\qquad P_9=(V_+,-C_+),
\eeq
where 
\[C_\pm=\sqrt{g(V_\pm)}.\] 
Note that $P_6$ ($P_7$) coalesces with
$P_8$ ($P_9$, respectively) whenever $V_+=V_-$. This occurs for certain values
of the parameters $n$, $\gamma$, $\lambda$; see (A)-(B) below.

A direct calculation reveals that 
\beq\label{on_L_+-}
	C_\pm^2=(1+V_\pm)^2,
\eeq
so that $P_6$-$P_9$, when present, are located along the critical lines $L_\pm$.

\subsection{$P_6$-$P_9$ in isentropic case}
We proceed to analyze the presence of $P_6$-$P_9$ in the special case 
when $\kappa$ takes the isentropic value $\bar\kappa$ in \eq{isentr_kappa} (as will be seen 
to be dictated by the constraints (C1)-(C3)):
\[\bar\kappa=-\textstyle\frac{2(\lambda-1)}{\gamma-1}\equiv-\textstyle\frac{2\mu}{\eps}.\]
In this case \eq{V_pm} gives
\beq\label{isntr_V_pm}
	V_\pm=\textstyle\frac{1}{2}(\mathfrak a_n\pm\sqrt{\mathfrak Q_n}),
\eeq
where 
\beq\label{aQ}
	\mathfrak a_n=\mathfrak a_n(\gamma,\lambda)=\textstyle\frac{(\eps-2)}{m\eps}\mu-1\qquad\text{and}\qquad
	\mathfrak Q_n=\mathfrak Q_n(\gamma,\lambda)=\left(\textstyle\frac{\eps-2}{m\eps}\right)^2\!\!\mu^2
	-2\textstyle\frac{(\eps+2)}{m\eps}\mu+1.
\eeq
A necessary condition for $P_6$-$P_9$ to be present is that $\mathfrak Q_n\geq 0$ in \eq{isntr_V_pm},
and there are two cases:
\begin{enumerate}
	\item[(A)] If $\gamma\neq3$, then $\mathfrak Q_n$ is quadratic in $\mu$, and a calculation
	shows that $\mathfrak Q_n\geq0$ if and only if
	$\lambda\leq \hat\lambda_n(\gamma)$ or $\lambda\geq\check\lambda_n(\gamma)$, where 
	\beq\label{lambda_max}
		\hat\lambda_n(\gamma):=1+\textstyle\frac{(n-1)(\gamma-1)}{(\gamma+1)+\sqrt{8(\gamma-1)}}
	\eeq
	and 
	\beq\label{lambda_min}
		\check\lambda_n(\gamma)=1+\textstyle\frac{(n-1)(\gamma-1)}{(\gamma+1)-\sqrt{8(\gamma-1)}}.
	\eeq
	In this case, $V_+=V_-$ if and only if either $\lambda=\hat\lambda_n(\gamma)$ or $\lambda=\check\lambda_n(\gamma)$.
	\item[(B)] If $\gamma=3$, then $\mathfrak Q_n$ is linear in $\mu$, and a calculation shows 
	that $\mathfrak Q_n\geq 0$ if and only if $\lambda\leq\hat\lambda_n(3)\equiv1+ \frac{n-1}{4}$. 
	Also, $V_+=V_-$ if and only if $\lambda=\hat\lambda_n(3)$.
\end{enumerate}
We shall later provide sufficient conditions for the presence of $P_6$-$P_9$; see Section \ref{P_8_9_locns}.

\subsection{Critical points at infinity}\label{P_infs}
The critical points at infinity for \eq{CV_ode}, viz.\ $P_{\pm\infty}=(V_*,\pm\infty)$,
will serve as end states at $x=\mp\infty$, respectively, of the sought-for solutions 
$(V(x),C(x))$ to \eq{V_sim2}-\eq{C_sim2}. According to constraint (C3), we shall 
want there to be a unique $P_9P_{-\infty}$-trajectory of \eq{CV_ode}.
(From \eq{symms} it follows that there is then a unique $P_{+\infty}P_8$-trajectory 
as well, viz.\ the reflection of the $P_9P_{-\infty}$-trajectory about the $V$-axis.) At the same time,
we want there to be infinitely many $P_8P_9$-trajectories that pass through $P_1=(0,0)$.
We therefore require that $P_8$ and $P_9$ are nodes and that $P_{\pm\infty}$ are saddles
for \eq{CV_ode} (cf. constraint (C3)).

The critical points at infinity are most easily analyzed by switching to the 
variables $w:=V-V_*$ and $z:=C^{-2}$ (thus treating  $P_{\pm\infty}$ simultaneously).
Linearizing the resulting equation for $\frac{dz}{dw}$ about $(w,z)=(0,0)$ yields
\beq\label{wz_ode}
	\frac{dz}{dw}=\frac{Az}{Bz-nw},
\eeq
with 
\beq\label{ab}
	A=2(1+\textstyle\frac{\alpha}{1+V_*}),\qquad B=V_*(1+V_*)(\lambda+V_*).
\eeq
It follows that $P_{\pm\infty}$ are saddle points if and only if $A>0$. This is clearly  
satisfied when $\kappa=\bar\kappa$ (since then $\alpha=0$ and $A=2$). 
For the remainder of this section $A>0$ is assumed.

As indicated above, we need to argue that there is a unique $P_9P_{-\infty}$-trajectory.
We postpone this argument till Section \ref{9-infty_connecn} when the relative locations 
of $\mathcal F$, $\mathcal G$, and trajectories of \eq{CV_ode} near $P_9$ have been
determined. We therefore take the existence of the $P_9P_{-\infty}$- and $P_{+\infty}P_8$-solutions to 
\eq{V_sim2}-\eq{C_sim2} as given for now, and use \eq{wz_ode} to obtain their behavior 
near $P_{\pm\infty}$. 

The  solutions $(w(x),z(x))$ of \eq{wz_ode} correspond to solutions of 
\eq{V_sim2}-\eq{C_sim2} tending to $P_{\pm\infty}$ 
approach $(0,0)$, and they do so tangent to the stable subspace of \eq{wz_ode} at $(0,0)$.
According to \eq{wz_ode}, this subspace is $\{z=\frac{n+A}{B}w\}$, and it follows that
$C(x)^2(V(x)-V_*)\sim \frac{B}{n+A}$ as $x\to\pm\infty$ for the solutions in question.
Using this information in \eq{V_sim2}, and simplifying the expressions as
$V(x)\to V_*$, shows that to leading order, $V'(x)\sim -\frac{A}{\lambda x}(V(x)-V_*)$.
It follows that the solutions of \eq{V_sim2}-\eq{C_sim2} which approach 
the saddle points $P_{\pm\infty}$, do so to leading order as follows:
\beq\label{VC_asymp}
	|V(x)- V_*|\sim |x|^{-\frac{A}{\lambda}}
	\qquad\text{and}\qquad C(x)\sim\mp|x|^{\frac{A}{2\lambda}}
	\qquad\text{as $x\to\pm\infty$.}
\eeq
\begin{remark}\label{no_vacuum}
	The asymptotic behavior of $C(x)$ as $|x|\to\infty$ yields information about the 
	values of sound speed and density along the center of motion. In particular,
	with $\kappa=\bar\kappa$ (isentropic flow) we have $A=2$, so that to leading order
	\beq\label{VC_asymp2}
		|C(x)|\sim x^\frac{1}{\lambda}\qquad\text{as $x\to+\infty$.}
	\eeq
	As $\rho\propto c^\frac{2}{\gamma-1}$ in isentropic flow, \eq{VC_asymp2} therefore 
	gives, for a fixed time $t>0$, that
	\[\rho(t,r)\propto c(t,r)^\frac{2}{\gamma-1}
	=\left(\textstyle\frac{r^{1-\lambda}}{\lambda}\frac{|C(x)|}{x}\right)^\frac{2}{\gamma-1}
	\sim k t^\frac{\bar\kappa}{\lambda}\qquad\text{as $r\downarrow 0$,}  \]
	where $k$ is a strictly positive constant.
	This shows that no vacuum occurs along the center of motion at positive 
	times in the solutions described by Theorem \ref{thm} (except asymptotically 
	as $t\to\infty$). The same applies to negative times. 
	
	The only other way vacuum could occur would be by having 
	$\lim_{x\to0}\frac{C(x)}{x}\equiv \omega=0$ (cf.\ \eq{well_bhvd}). 
	However, as argued above in Section \ref{P1_P3}, $\omega<0$ for solutions 
	under consideration.
\end{remark}

\section{Restricting $\lambda$ and $\kappa$ in terms of $n$ and $\gamma$}
\label{lam_kap_constrs}
In this section and the next we provide analytic formulations and consequences of
the three constraints (C1)-(C3), and see how they delimit the possible 
values of the similarity parameters $\kappa$ and $\lambda$.
Recall the standing assumptions that $n=2$ or $3$, $\gamma>1$, and $\lambda>1$.

Constraint (C1) requires that the integrals 
\beq\label{C1}
	\int_0^{\bar r}\rho(t,r)r^m\, dr,\quad
	\int_0^{\bar r}\rho(t,r)|u(t,r)|r^m\, dr,\quad
	\int_0^{\bar r}\rho(t,r)\left(e(t,r)+\textstyle\frac{1}{2}|u(t,r)|^2\right)r^m\, dr
\eeq
are finite at all times $t$ and for all finite $\bar r>0$.
As we shall see below, the second part of constraint (C2) will imply that the 
solutions we construct are bounded near $r=0$ at all times $t\neq0$. For our present purposes
it therefore suffices to impose (C1) at time of collapse $t=0$. By using \eq{uc_at_collapse} 
together with $\rho(0,r)=R(0)r^\kappa$, we get that \eq{C1} holds at time $t=0$  if 
and only if
\begin{itemize}
	\item[(I)] $\kappa+n>0$
	\item[(II)] $\lambda<1+\kappa+n$
	\item[(III)] $\lambda<1+\textstyle\frac{\kappa+n}{2}$.
\end{itemize}
Clearly, (II) is a consequence of (I) and (III).
For later use we note that (III) and $\gamma>1$ give
\beq\label{1+V*}
	0<\textstyle\frac{n+\kappa-2(\lambda-1)}{n\gamma}
	<\textstyle\frac{n\gamma+\kappa-2(\lambda-1)}{n\gamma}\equiv 1+V_*,
\eeq
where $V_*$ is given in \eq{V_*}.

We proceed to address the second part of constraint (C2). For this we 
assume, in accordance with constraint (C3), that we have selected a
solution $(V(x),C(x))$ of \eq{V_sim2}-\eq{C_sim2} which approaches 
$P_{\pm\infty}$ as $x\to\mp\infty$. In the following argument we fix a
time $t\neq 0$. We then have, by \eq{sim_vars}${}_1$, $|x|\propto r^{-\lambda}$
so that $r\downarrow0$ corresponds to $|x|\to\infty$. 

Now, (C2) requires that the primary flow variables $\rho$, $u$, $c$ remain 
bounded as the center of motion $r=0$ is approached. 
Consider first the speed
\beq\label{u_at_t}
	u(t,r)=-\textstyle\frac{r^{1-\lambda}}{\lambda}\frac{V(x)}{x}=-\frac{r}{\lambda t}V(x)
	\propto V(x)r. 
\eeq
According to \eq{VC_asymp}${}_1$ we have $V(x)\to V_*$ as $|x|\to\infty$,
and it follows that $u(t,r)\sim r$ as $r\downarrow 0$.
I.e., at any time different from the time of collapse $t=0$, the speed of the fluid 
vanishes at a linear rate as the center of motion $r=0$ is approached.
Thus, boundedness (in fact, vanishing) of the fluid velocity as $r\downarrow0$, 
does not imply any further constraints.

Next, consider the sound speed $c(t,r)$ as $r\downarrow 0$. 
From \eq{VC_asymp}${}_2$ and $|x|\propto r^{-\lambda}$ we get
\beq\label{c_at_t}
	c(t,r)=-\textstyle\frac{r}{\lambda t}C(x)
	\sim r^{1-\frac{A}{2}}\qquad\text{as $r\downarrow0$}. 
\eeq 
Thus, boundedness of $c(t,r)$ as $r\downarrow0$ requires $A\leq2$.
According to \eq{ab}${}_1$ and \eq{1+V*}, this amounts to 
\beq\label{alfa}
	\alpha\leq 0.
\eeq
Finally, consider the density field $\rho(t,r)$ as as $r\downarrow 0$. 
Using \eq{sim_vars}${}_2$, the adiabatic integral \eq{adiab_int}, \eq{1+V*}, and \eq{VC_asymp}${}_1$,
we have
\[\rho(t,r)=r^\kappa R(x)\sim r^\kappa(\textstyle\frac{x}{C(x)})^\frac{2}{1-\gamma+q},\]
where $q$ is given in \eq{q}. Using again \eq{VC_asymp}${}_2$ and $|x|\propto r^{-\lambda}$, we 
get
\[\rho(t,r)\sim r^{\kappa+\frac{A-2\lambda}{1-\gamma+q}}\qquad\text{as $r\downarrow0$}.\] 
Thus, boundedness of $\rho(t,r)$ as $r\downarrow0$ requires
\beq\label{rho_req}
	\kappa+\textstyle\frac{A-2\lambda}{1-\gamma+q}\geq0.
\eeq
We proceed to show that, under the integrability constraints (I)-(III), this last inequality 
together with \eq{alfa} yield $\alpha=0$. First, recall from \eq{q} that $q=\frac{2\gamma\alpha}{\kappa+n}$;
in particular, \eq{alfa} and (I) give $q\leq0$. As $\gamma>1$ we therefore have $1-\gamma+q<0$,
so that \eq{rho_req} may be rewritten as 
\[\kappa(1-\gamma+q)+A-2\lambda\leq0.\]
Substituting $q=\frac{2\gamma\alpha}{\kappa+n}$ and $A=2(1+\textstyle\frac{\alpha}{1+V_*})$, 
and rearranging, we obtain the equivalent inequality
\beq\label{rho_req2}
	(\textstyle\frac{\gamma\kappa}{\kappa+n}+\frac{1}{1+V_*})\alpha
	\leq (\lambda-1)+\frac{\kappa}{2}(\gamma-1)\equiv\alpha\gamma,
\eeq
cf.\ \eq{alpha}.
Now assume for contradiction that $\alpha<0$. According \eq{alpha} we then have
\beq\label{rho_req3}
	\lambda<1-\textstyle\frac{\kappa}{2}(\gamma-1),
\eeq
while \eq{rho_req2} gives
\beq\label{rho_req4}
	\textstyle\frac{1}{1+V_*}\geq \frac{\gamma n}{\kappa+n}.
\eeq
According to (I) and \eq{1+V*}, both  $\kappa+n$ and $1+V_*$ 
are strictly positive quantities. Using this in \eq{rho_req4}, and also
the expression \eq{V_*} for $V_*$, we rewrite \eq{rho_req4} as
\[\lambda\geq 1+\textstyle\frac{n}{2}(\gamma-1),\]
which together with \eq{rho_req3} gives 
\[1+\textstyle\frac{n}{2}(\gamma-1)<1-\textstyle\frac{\kappa}{2}(\gamma-1)\qquad
\text{or}\qquad (\kappa+n)(\gamma-1)<0.\]
As $\gamma>1$ we obtain $\kappa+n<0$, in contradiction to (I). Therefore,  
$\alpha$ cannot be strictly negative, and \eq{alfa} gives $\alpha=0$.

Opting to keep $\lambda$ as a parameter, it follows from the expression 
\eq{alpha} that the similarity parameter $\kappa$ 
takes the isentropic value $\bar\kappa$ in \eq{isentr_kappa}.
We have established the following:
\begin{lemma}\label{kappa_value}
	Consider a radial, self-similar solution of the form \eq{sim_vars} to the 
	Euler system \eq{m_eul}-\eq{mom_eul}-\eq{ener_eul}. 
	Assume that the solution satisfies the constraints (C1)-(C3). 
	Then the similarity parameter $\kappa$ must necessarily take the value
	\beq\label{isentr_kappa2}
		\bar\kappa=\bar\kappa(\gamma,\lambda):=-\textstyle\frac{2(\lambda-1)}{\gamma-1},
	\eeq
	and the flow is therefore globally isentropic (cf. \eq{CR}).
\end{lemma}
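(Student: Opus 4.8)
The plan is to show that constraints (C1)--(C3), imposed on a self-similar solution of the form \eq{sim_vars}, force the parameter $\alpha$ of \eq{alpha} to vanish, and then to read off $\bar\kappa$ and global isentropy. First I would extract the analytic content of (C1) at the collapse time: inserting $\rho(0,r)=R(0)r^\kappa$ together with \eq{uc_at_collapse} into the integrals \eq{C1} turns finiteness of mass, momentum, and energy into three power-counting inequalities, namely (I) $\kappa+n>0$, (II) $\lambda<1+\kappa+n$, and (III) $\lambda<1+\tfrac{\kappa+n}{2}$, of which (II) is a consequence of (I) and (III). (One notes separately that the second half of (C2) makes the solution bounded near $r=0$ at every $t\neq0$, so imposing (C1) only at $t=0$ suffices.) Inequality (III) together with $\gamma>1$ also gives $1+V_*>0$ via \eq{1+V*}, a fact used repeatedly below.

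Next I would invoke (C3) to fix attention on a solution $(V(x),C(x))$ of \eq{V_sim2}--\eq{C_sim2} tending to the saddles $P_{\pm\infty}$ as $x\to\mp\infty$, so that the asymptotics \eq{VC_asymp} are available. Fixing $t\neq0$, one has $|x|\propto r^{-\lambda}$, so $r\downarrow0$ corresponds to $|x|\to\infty$. Feeding \eq{VC_asymp} through \eq{sim_vars} yields three consequences of (C2): $u(t,r)\sim r$, imposing nothing; $c(t,r)\sim r^{1-A/2}$, so boundedness forces $A\le2$, equivalently $\alpha\le0$ by \eq{ab} and \eq{1+V*}; and, after using the adiabatic integral \eq{adiab_int} to write $R(x)\sim(x/C(x))^{2/(1-\gamma+q)}$, $\rho(t,r)\sim r^{\kappa+(A-2\lambda)/(1-\gamma+q)}$, so boundedness forces \eq{rho_req}, i.e.\ $\kappa+\tfrac{A-2\lambda}{1-\gamma+q}\ge0$.

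The heart of the argument --- and the step I expect to be the main obstacle --- is to combine $\alpha\le0$ with \eq{rho_req} so as to conclude $\alpha=0$. Here I would first note that $q=\tfrac{2\gamma\alpha}{\kappa+n}\le0$, by $\alpha\le0$ and (I), so that $1-\gamma+q<0$; this lets me clear the denominator in \eq{rho_req}, reversing the inequality, and after substituting $q$ and $A=2(1+\tfrac{\alpha}{1+V_*})$ rewrite it in the equivalent form \eq{rho_req2}. Assuming for contradiction that $\alpha<0$, the formula \eq{alpha} gives $\lambda<1-\tfrac{\kappa}{2}(\gamma-1)$, while \eq{rho_req2} (dividing through by the negative quantity $\alpha$) gives $\tfrac{1}{1+V_*}\ge\tfrac{\gamma n}{\kappa+n}$; using (I), $1+V_*>0$, and the formula \eq{V_*} for $V_*$, this rearranges to $\lambda\ge1+\tfrac{n}{2}(\gamma-1)$. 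The two displayed bounds on $\lambda$ are incompatible unless $(\kappa+n)(\gamma-1)<0$, hence $\kappa+n<0$ since $\gamma>1$, contradicting (I). Therefore $\alpha$ cannot be strictly negative and $\alpha=0$.

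Finally, keeping $\lambda$ as a free parameter and solving \eq{alpha} with $\alpha=0$ for $\kappa$ produces exactly $\kappa=\bar\kappa=-\tfrac{2(\lambda-1)}{\gamma-1}$, which is \eq{isentr_kappa2}. Global isentropy is then immediate: for $\kappa=\bar\kappa$ one has $\alpha=q=0$, so the exact integral \eq{adiab_int} collapses to \eq{CR}, meaning $\sigma(x)=R(x)^{1-\gamma}(C(x)/x)^2$ is constant along the flow; by \eq{S_sigma} with $\alpha=0$ the specific entropy $S$ is then a function of $\sigma(x)$ alone, hence globally constant in space-time. I would expect the verification of the algebraic equivalences (\eq{rho_req} $\Leftrightarrow$ \eq{rho_req2}, and the rearrangement of $\tfrac{1}{1+V_*}\ge\tfrac{\gamma n}{\kappa+n}$) to be routine but sign-sensitive, which is why the contradiction step is the delicate one.
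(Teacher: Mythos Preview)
Your proposal is correct and follows the paper's argument essentially step for step: extract (I)--(III) from (C1) at $t=0$, use the $P_{\pm\infty}$ asymptotics \eq{VC_asymp} to turn boundedness of $c$ and $\rho$ into $\alpha\le0$ and \eq{rho_req}, then run the same contradiction via \eq{rho_req2}--\eq{rho_req4} against (I) to force $\alpha=0$. The only cosmetic difference is that the paper presents the argument as running text leading up to the lemma statement rather than as a labeled proof, but the content is identical.
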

\begin{remark}
	Conversely, it is evident from the calculations above that with $n=2$ or $n=3$, 
	$\gamma>1$, and $\lambda>1$ given, 
	the choice $\kappa=\bar\kappa(\gamma,\lambda)$ guarantees that 
	the constraint (C2) is satisfied.
\end{remark}

Before proceeding we record some consequences of Lemma \ref{kappa_value}. 
First, observe that 
\eq{isentr_kappa2} is equivalent to $\alpha=0$, so that \eq{ab} gives $A>0$,
which is the condition guaranteeing that $P_{\pm\infty}$ are saddles.
In particular, the isentropic $\kappa$-value in \eq{isentr_kappa2} is consistent 
with the second part of constraint (C3).

Next, consider the inequalities (I)-(III). A calculation
shows that, with $\kappa=\bar\kappa$, they reduce to the single constraint
\beq\label{1st_lam_constr}
	\lambda<\tilde\lambda_n(\gamma):=1+\textstyle\frac{n}{2}(1-\frac{1}{\gamma}).
\eeq
Also, as detailed in Section \ref{P1_P3}, amplitude blowup in $u(t,r)$ and $c(t,r)$ 
at time $t=0$ requires $\lambda>1$. It follows that $\bar\kappa<0$; in particular, 
it follows from \eq{sim_vars}${}_2$ that also the density $\rho(t,r)$ suffers blowup
at collapse. We note that \eq{pressure1}-\eq{sound_speed} implies that the same 
applies to the temperature $\theta\propto c^2$ and pressure $p\propto \rho\theta$.

From here on the following  assumptions are in force:
\beq\label{assumpns_1}
	n=2\quad\text{or}\quad n=3, \qquad \gamma>1,
	\qquad\kappa=\bar\kappa(\gamma,\lambda),\qquad 
	1<\lambda<\tilde\lambda_n(\gamma).
\eeq
Note that, with the isentropic $\kappa$-value, we have (cf.\ \eq{V_*})
\beq\label{isntr_V_star}
	V_*=-\textstyle\frac{2(\lambda-1)}{n(\gamma-1)}=-\textstyle\frac{2\mu}{n\eps}<0.
\eeq
Summing up the analysis above we have:
\begin{proposition}\label{constraints_C1_C_2_and _2nd_C3}
	Assume \eq{assumpns_1} holds. Then the constraints (C1) and (C2),
	and also the second part of constraint (C3) concerning $P_{\pm\infty}$, are all met. 
\end{proposition}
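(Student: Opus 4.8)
The plan is to observe that, under \eq{assumpns_1}, the proposition is a synthesis of facts already assembled in this section, so the proof reduces to collecting them one constraint at a time. The single computation on which all three rely is this: since $\kappa=\bar\kappa(\gamma,\lambda)$, formula \eq{alpha} forces $\alpha=0$, and then \eq{q} gives $q=0$ while \eq{ab}${}_1$ gives $A=2$.

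First I would dispose of (C2). Fixing any $t\neq0$, \eq{sim_vars}${}_1$ turns ``$r\downarrow0$'' into ``$|x|\to\infty$'', so one can feed the saddle asymptotics \eq{VC_asymp} (with $A=2$) into \eq{u_at_t} and \eq{c_at_t}, and into the density asymptotics obtained from the adiabatic integral in its reduced isentropic form \eq{CR}. This gives $u(t,r)\sim r$, $c(t,r)\sim r^{1-A/2}=r^{0}$, and $\rho(t,r)\sim r^{\bar\kappa+(A-2\lambda)/(1-\gamma)}=r^{0}$, so $\rho$, $u$, $c$ — and hence $\theta\propto c^{2}$ and $p\propto\rho\theta$ by \eq{pressure1}-\eq{sound_speed} — all remain bounded (indeed tend to finite limits) as the center of motion is approached at any time other than collapse; genuine blowup at $(t,{\bf x})=(0,0)$ itself then follows from $\lambda>1$ (hence $\bar\kappa<0$) via \eq{uc_at_collapse} and \eq{sim_vars}${}_2$. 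This is exactly (C2), and it also carries out the bulk of (C1): by the boundedness just established, the three integrands in \eq{C1} are bounded near $r=0$ for each fixed $t\neq0$ and continuous on any $[\delta,\bar r]$, so those integrals are finite; it then remains to check finiteness only at $t=0$, where by \eq{uc_at_collapse} and $\rho(0,r)=R(0)r^{\kappa}$ it is equivalent to (I)-(III). A direct calculation shows that, with $\kappa=\bar\kappa$, (I)-(III) collapse to the single bound $\lambda<\tilde\lambda_n(\gamma)$ recorded in \eq{1st_lam_constr}, which is part of \eq{assumpns_1}; hence (C1).

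For the part of (C3) concerning the points at infinity, the linearization \eq{wz_ode}-\eq{ab} shows that $P_{\pm\infty}$ are saddles precisely when $A>0$, and $\alpha=0$ gives $A=2>0$; the asymptotics \eq{VC_asymp} record that the relevant trajectories reach $P_{\pm\infty}$ as $x\to\pm\infty$, which is the ``approached as $|x|\to\infty$'' clause. (Existence of those trajectories, and the nodal character of $P_8$ and $P_9$ demanded by the first part of (C3), are deferred to later sections and are not claimed here.)

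I expect no obstacle of substance. The one thing worth double-checking is the absence of circularity — namely that the $t\neq0$ boundedness invoked for (C1) was derived from $\alpha=0$ and the saddle asymptotics \eq{VC_asymp} alone, with no appeal to (C1) itself — together with the short, purely algebraic verification that (I)-(III) is equivalent to \eq{1st_lam_constr} when $\kappa=\bar\kappa$.
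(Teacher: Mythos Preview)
Your proposal is correct and follows essentially the same approach as the paper: the proposition is stated there as ``Summing up the analysis above,'' and your argument collects precisely those same ingredients (namely $\alpha=0\Rightarrow A=2$, the resulting asymptotics \eq{VC_asymp} for (C2), the reduction of (I)--(III) to \eq{1st_lam_constr} for (C1), and $A>0$ for the saddle condition at $P_{\pm\infty}$). One inconsequential slip: the relevant trajectories reach $P_{\pm\infty}$ as $x\to\mp\infty$ (not $\pm\infty$), but since (C3) only asks for ``$|x|\to\infty$'' this does not affect your argument.
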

As we shall see below, the $\lambda$-range in \eq{assumpns_1} will be delimited 
further by the requirement that the critical points $P_8$ and $P_9$ are present as nodes 
(i.e., the first part of constraint (C3)).

\section{Presence, location, and types of $P_4$-$P_9$}\label{presence_type}
With the assumptions \eq{assumpns_1} in force, we proceed to analyze when 
the critical points $P_8$ and $P_9$ are present, and if so, when they are nodes.

\subsection{Preliminaries}\label{prelimns}
Necessary conditions for the presence of 
$P_8$ and $P_9$ were listed in (A)-(B) in Section \ref{P4_P9}. For $\gamma\neq 3$ 
they involve the two functions $\hat\lambda_n(\gamma)$ and $\check\lambda_n(\gamma)$,
which satisfy  $\hat\lambda_n(\gamma)<\check\lambda_n(\gamma)$ 
(cf.\ \eq{lambda_max}-\eq{lambda_min}).
To avoid dealing with a number of sub-cases, we choose the lower $\lambda$-value
and require
\beq\label{lambda_hat}
	1<\lambda< \hat\lambda_n(\gamma)
	=1+\textstyle\frac{(n-1)(\gamma-1)}{(\gamma+1)+\sqrt{8(\gamma-1)}},
\eeq
for any $n$, $\gamma$ under consideration (including $\gamma=3$). 
Under this requirement we have $\mathfrak Q_n(\gamma,\lambda)>0$
for all $n$, $\gamma$, $\lambda$ under consideration. In particular, this 
implies that $P_6$, $P_7$, $P_8$, $P_9$, when present, are four distinct points.

Before proceeding we compare the constraint \eq{lambda_hat} with the standing 
assumption in \eq{1st_lam_constr}.
It is immediate to verify that for $n=2$, $\hat\lambda_2(\gamma)<\tilde\lambda_2(\gamma)$
for all $\gamma>1$. On the other hand, for $n=3$ and $\gamma>1$, 
$\hat\lambda_3(\gamma)<\tilde\lambda_3(\gamma)$ holds if and only if 
$1<\gamma<\gamma_+:=3(13+4\sqrt{10})\approx 76.9$.
Defining 
\beq\label{lambda_star_n=2}
	\lambda^*_2(\gamma):=\hat\lambda_2(\gamma) 
\eeq
and
\beq\label{lambda_star_n=3}
	\lambda^*_3(\gamma):=
	\left\{
	\begin{array}{ll}
		\hat\lambda_3(\gamma) & \text{when $n=3$ and $1<\gamma<\gamma_+$}\\
		\tilde\lambda_3(\gamma)	& \text{when $n=3$ and $\gamma\geq\gamma_+$},
	\end{array}
	\right.
\eeq
we therefore have the following:
\begin{lemma}\label{lambda_star_n_constr}
        For $n=2$ or $n=3$, $\gamma>1$, and $\kappa=\bar\kappa(\gamma,\lambda)$,
        the conditions (C1) and (C2) are met and the radicand 
        $\mathfrak Q_n(\gamma,\lambda)$ in \eq{isntr_V_pm} (determining $V_\pm$) 
        is strictly positive whenever
        \beq\label{2nd_lam_constr}
        		1<\lambda<\lambda^*_n(\gamma).
        \eeq
\end{lemma}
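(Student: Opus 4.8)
The plan is to observe that the single hypothesis $1<\lambda<\lambda^*_n(\gamma)$ packages together exactly the two conditions under which the desired conclusions have already been secured earlier in the paper: with $\kappa=\bar\kappa$, the bound $\lambda<\tilde\lambda_n(\gamma)$ gives (C1) and (C2) by Proposition~\ref{constraints_C1_C_2_and _2nd_C3} (recall that \eq{1st_lam_constr} shows (I)--(III) are equivalent to $\lambda<\tilde\lambda_n(\gamma)$ when $\kappa=\bar\kappa$), while the bound $\lambda<\hat\lambda_n(\gamma)$ gives $\mathfrak Q_n(\gamma,\lambda)>0$ by cases (A)--(B) of Section~\ref{P4_P9}. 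Thus the whole lemma reduces to the numerical identity
\[\lambda^*_n(\gamma)=\min\{\hat\lambda_n(\gamma),\,\tilde\lambda_n(\gamma)\}\qquad(n=2,3,\ \gamma>1),\]
after which both conclusions follow by quotation.

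First I would record the two needed implications precisely. If $1<\lambda<\tilde\lambda_n(\gamma)$ and $\kappa=\bar\kappa(\gamma,\lambda)$, then all of the standing assumptions \eq{assumpns_1} hold, so Proposition~\ref{constraints_C1_C_2_and _2nd_C3} applies and (C1), (C2) are satisfied. If $1<\lambda<\hat\lambda_n(\gamma)$, then $\mathfrak Q_n(\gamma,\lambda)\geq0$ by (A) for $\gamma\neq3$ and by (B) for $\gamma=3$; moreover $\mathfrak Q_n$ vanishes only at $\lambda\in\{\hat\lambda_n(\gamma),\check\lambda_n(\gamma)\}$ (equivalently, $V_+=V_-$ only there), and since $\hat\lambda_n(\gamma)<\check\lambda_n(\gamma)$, the strict inequality $\lambda<\hat\lambda_n(\gamma)$ forces $\mathfrak Q_n(\gamma,\lambda)>0$. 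So everything comes down to checking $\lambda^*_n(\gamma)\leq\min\{\hat\lambda_n(\gamma),\tilde\lambda_n(\gamma)\}$.

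Next I would verify the comparison of $\hat\lambda_n$ and $\tilde\lambda_n$ announced in the discussion preceding \eq{lambda_star_n=2}--\eq{lambda_star_n=3}. For $n=2$, write $\tilde\lambda_2(\gamma)-1=\frac{\gamma-1}{\gamma}$ and $\hat\lambda_2(\gamma)-1=\frac{\gamma-1}{(\gamma+1)+\sqrt{8(\gamma-1)}}$; since $(\gamma+1)+\sqrt{8(\gamma-1)}>\gamma$ for all $\gamma>1$, we get $\hat\lambda_2(\gamma)<\tilde\lambda_2(\gamma)$, hence $\lambda^*_2(\gamma)=\hat\lambda_2(\gamma)=\min\{\hat\lambda_2,\tilde\lambda_2\}$. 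For $n=3$, comparing $\hat\lambda_3(\gamma)-1=\frac{2(\gamma-1)}{(\gamma+1)+\sqrt{8(\gamma-1)}}$ with $\tilde\lambda_3(\gamma)-1=\frac{3(\gamma-1)}{2\gamma}$ reduces, after clearing the positive denominators and the factor $\gamma-1$, to $\gamma-3<3\sqrt{8(\gamma-1)}$. This is automatic for $1<\gamma\leq3$, and for $\gamma>3$ squaring (reversibly, since both sides are then nonnegative) turns it into $\gamma^2-78\gamma+81<0$, i.e.\ $\gamma<39+12\sqrt{10}=3(13+4\sqrt{10})=\gamma_+$ — the other root $39-12\sqrt{10}\approx1.05$ lies below $3$ and is irrelevant. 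Hence $\hat\lambda_3<\tilde\lambda_3$ for $1<\gamma<\gamma_+$ and $\hat\lambda_3\geq\tilde\lambda_3$ for $\gamma\geq\gamma_+$, which is precisely the case split in \eq{lambda_star_n=3}; in both regimes $\lambda^*_3(\gamma)=\min\{\hat\lambda_3(\gamma),\tilde\lambda_3(\gamma)\}$.

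Putting these together gives $\lambda^*_n(\gamma)=\min\{\hat\lambda_n(\gamma),\tilde\lambda_n(\gamma)\}$ for $n=2,3$, so $1<\lambda<\lambda^*_n(\gamma)$ yields simultaneously $\lambda<\tilde\lambda_n(\gamma)$ and $\lambda<\hat\lambda_n(\gamma)$, and the conclusion follows from the two implications above. I do not anticipate a genuine difficulty: the lemma is essentially bookkeeping that consolidates constraints already derived. The one point deserving a little care is the $n=3$ comparison — confirming that of the two roots of $\gamma^2-78\gamma+81$ only $\gamma_+=3(13+4\sqrt{10})$ governs the crossover on $\gamma>1$, and that the squaring step used to reach it is reversible — and this is exactly what pins down the piecewise definition \eq{lambda_star_n=3}.
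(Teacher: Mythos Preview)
Your proposal is correct and follows essentially the same approach as the paper: the lemma is presented there without a separate proof, as a direct consequence of the discussion immediately preceding it, which establishes that $\lambda^*_n(\gamma)=\min\{\hat\lambda_n(\gamma),\tilde\lambda_n(\gamma)\}$ (with the $n=3$ crossover at $\gamma_+=3(13+4\sqrt{10})$) and then invokes Proposition~\ref{constraints_C1_C_2_and _2nd_C3} and cases (A)--(B). Your write-up simply supplies the arithmetic details the paper leaves to the reader.
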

We now assume \eq{2nd_lam_constr} is met.
The following analysis will show that, for $n$ and $\gamma$ fixed, the $\lambda$-range 
needs to be restricted further in order to meet the constraint (C3).
For this we begin by deriving conditions for having the pair of points $P_4,P_5$
located to the left of the pair of points $P_8,P_9$ in the $(V,C)$-plane, i.e., 
we seek conditions guaranteeing that
\beq\label{P_4_left_of_P_8}
	V_4<V_+,
\eeq
cf.\ \eq{V_4_C_4} and \eq{P_6-P_9}.
The inequality \eq{P_4_left_of_P_8} is relevant for two reasons: 
With $n$ and $\gamma$ fixed, and under suitable conditions on $\lambda$, 
it will be used to argue that $P_4$-$P_9$ are all present, and that $P_8,P_9$ 
are nodes.

Substituting from \eq{V_4_C_4} and \eq{isntr_V_pm}-\eq{aQ} and 
rearranging show that \eq{P_4_left_of_P_8} is equivalent to
\beq\label{q_n_A_n}
	\sqrt{\mathfrak q_n(\gamma,\lambda)}>A_n(\gamma,\lambda),
\eeq
where 
\beq\label{q_n}
	\mathfrak q_n(\gamma,\lambda):=m^2\eps^2\mathfrak Q_n(\gamma,\lambda)
	=(\eps-2)^2\mu^2
	-2m\eps(\eps+2)\mu+m^2\eps^2,
\eeq
and
\[A_n(\gamma,\lambda):=\textstyle\frac{m\eps}{2+n\eps}
[n\eps-2(2\mu+1)]-(\eps-2)\mu.\]
To analyze \eq{q_n_A_n} it is convenient to consider the cases $n=2$ and $n=3$
separately. The calculations are elementary but somewhat lengthy, and we omit 
some of the details.

\subsubsection{$V_4<V_+$ for $n=2$}\label{sign_W_9_n=2}
With $n=2$, \eq{q_n_A_n} takes the form
\beq\label{W_9_n=2_ineq}
	\sqrt{\mathfrak q_2(\gamma,\lambda)}
	>A_2(\gamma,\lambda)\equiv\textstyle\frac{(\gamma+1)}{\gamma}(2-\gamma)
	[\lambda-\frac{2\gamma}{\gamma+1}].
\eeq
A direct calculation using \eq{lambda_hat} and \eq{lambda_star_n=3} verifies that 
$\lambda^*_2(\gamma)<\frac{2\gamma}{\gamma+1}$ for all $\gamma>1$.
Therefore, under the current assumption that $1<\lambda<\lambda^*_2(\gamma)$, the 
square-bracketed term on the right-hand side of the inequality in
\eq{W_9_n=2_ineq} is negative. 
In particular, for  $1<\gamma\leq 2$, $A_2(\gamma,\lambda)$ 
is non-positive, and thus \eq{q_n_A_n} holds, i.e., $V_4<V_+$ in this case.

For $\gamma>2$ the situation is more involved: $A_2(\gamma,\lambda)$ is then strictly 
positive for $1<\lambda<\lambda^*_2(\gamma)$, and we need to evaluate the inequality 
in \eq{W_9_n=2_ineq}. Squaring both sides and rearranging we get that $V_4<V_+$ now holds 
if and only if 
\beq\label{W_9_n=2_ineq_gamma_gtr_2}
	(\gamma^2-2\gamma-1)(\lambda-1)^2+2(\gamma^2-1)(\lambda-1)-(\gamma-1)^2<0.
\eeq
A calculation verifies that that \eq{W_9_n=2_ineq_gamma_gtr_2} holds
whenever $\gamma>2$ and $1<\lambda<\frac{\gamma\sqrt 2 }{\gamma+\sqrt 2 -1}$.
A further calculation shows that the upper limit in the last expression satisfies
$\frac{\gamma\sqrt 2 }{\gamma+\sqrt 2 -1}<\lambda_2^*(\gamma)$ for all
$\gamma>2$. We therefore delimit the $\lambda$-range
further by setting 
\beq\label{lambda_0_n=2}
	\lambda^\circ_2(\gamma):=
	\left\{
	\begin{array}{ll}
		\hat\lambda_2(\gamma)
		=1+\textstyle\frac{\gamma-1}{(\gamma+1)+\sqrt{8(\gamma-1)}} & 1<\gamma\leq 2\\
		\frac{\gamma\sqrt 2 }{\gamma+\sqrt 2 -1} & \gamma> 2,
	\end{array}
	\right.
\eeq
and we require from now on that
\beq\label{3rd_lam_constr_n=2}
	1<\lambda<\lambda^\circ_2(\gamma)
\eeq
when $n=2$ and $\gamma>1$.

\subsubsection{$V_4<V_+$ for $n=3$}\label{sign_W_9_n=3}
The analysis is similar to the case $n=2$. First, with $n=3$, \eq{q_n_A_n} takes the form
\beq\label{W_9_n=3_ineq}
	\sqrt{\mathfrak q_3(\gamma,\lambda)}
	>A_3(\gamma,\lambda)\equiv\textstyle\frac{\gamma+1}{3\gamma-1}(5-3\gamma)
	[\lambda-\frac{3\gamma-1}{\gamma+1}].
\eeq
A direct calculation using \eq{lambda_hat} and \eq{lambda_star_n=3} verifies that 
$\lambda^*_3(\gamma)<\frac{3\gamma-1}{\gamma+1}$ for all $\gamma>1$.
Therefore, under the current assumption that $1<\lambda<\lambda^*_3(\gamma)$, the 
square-bracketed term on the right-hand side of the inequality in
\eq{W_9_n=3_ineq} is negative. 
In particular, for  $1<\gamma\leq \frac{5}{3}$, $A_3(\gamma,\lambda)$ 
is non-positive, and thus \eq{q_n_A_n} holds, i.e., $V_4<V_+$ in this case.

If instead $\gamma>\frac{5}{3}$, then $A_3(\gamma,\lambda)$ is  strictly 
positive for $1<\lambda<\lambda^*_3(\gamma)$, and we need to evaluate the inequality 
in \eq{W_9_n=3_ineq}. Squaring both sides and rearranging we get that $V_4<V_+$ now holds 
if and only if 
\beq\label{W_9_n=3_ineq_gamma_gtr_5_over_3}
	(3\gamma^2-6\gamma-1)(\lambda-1)^2+6(\gamma^2-1)(\lambda-1)-6(\gamma-1)^2<0.
\eeq
A calculation verifies that that \eq{W_9_n=3_ineq_gamma_gtr_5_over_3} holds
whenever $\gamma>\frac{5}{3}$ and $1<\lambda<\frac{3\gamma-1 }{\sqrt 3(\gamma-1)+2}$.
A further calculation shows that the upper limit in the last expression satisfies
$\frac{3\gamma-1 }{\sqrt 3(\gamma-1)+2}<\lambda_3^*(\gamma)$ for all
$\gamma>\frac{5}{3}$. We therefore delimit the  $\lambda$-range 
further  by setting 
\beq\label{lambda_0_n=3}
	\lambda^\circ_3(\gamma):=
	\left\{
	\begin{array}{ll}
		\lambda_3^*(\gamma)
		=1+\textstyle\frac{2(\gamma-1)}{(\gamma+1)+\sqrt{8(\gamma-1)}} & 1<\gamma\leq \frac 5 3\\
		\frac{3\gamma-1 }{\sqrt 3(\gamma-1)+2} & \gamma> \frac 5 3,
	\end{array}
	\right.
\eeq
and we require from now on that
\beq\label{3rd_lam_constr_n=3}
	1<\lambda<\lambda^\circ_3(\gamma)
\eeq
when $n=3$ and $\gamma>1$.

Summarizing the analysis above, we have:
\begin{lemma}\label{V_4_less_V_+}
	Assume that $n=2$ or $n=3$, $\gamma>1$,  $\kappa=\bar\kappa(\gamma,\lambda)$, 
	and $1<\lambda<\lambda^\circ_n(\gamma)$. Then,
 	\beq\label{V_4_less_V_+_eqn}
		V_4<V_+.
	\eeq
\end{lemma}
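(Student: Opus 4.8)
The plan is to prove the single inequality \eq{V_4_less_V_+_eqn} by reducing it to the explicit algebraic condition \eq{q_n_A_n}, and then checking that condition on the admissible $\lambda$-range case by case in $n$ and $\gamma$.

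First I would substitute the closed-form expressions $V_4=-\frac{2\lambda}{2+n(\gamma-1)}$ from \eq{V_4_C_4} and $V_+=\frac12(\mathfrak a_n+\sqrt{\mathfrak Q_n})$ from \eq{isntr_V_pm}--\eq{aQ} into $V_4<V_+$. Clearing denominators and isolating the square root turns the desired inequality into $\sqrt{\mathfrak q_n(\gamma,\lambda)}>A_n(\gamma,\lambda)$, where $\mathfrak q_n=m^2\eps^2\mathfrak Q_n$ is strictly positive by Lemma \ref{lambda_star_n_constr} (this is exactly why the constraint $1<\lambda<\lambda^*_n(\gamma)$ is carried along — it keeps the radicand positive) and $A_n$ is the explicit affine-in-$\lambda$ quantity displayed after \eq{q_n}.

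Next I would split on the sign of $A_n$. Writing $A_n$ in factored form as a positive constant times $(2-\gamma)$ (for $n=2$) or $(5-3\gamma)$ (for $n=3$) times a bracket of the form $[\lambda-\text{const}]$, one checks — using the elementary bounds $\lambda^*_2(\gamma)<\frac{2\gamma}{\gamma+1}$ and $\lambda^*_3(\gamma)<\frac{3\gamma-1}{\gamma+1}$, valid for all $\gamma>1$ — that the bracket is negative on the whole admissible range. Hence for $1<\gamma\le2$ when $n=2$, and $1<\gamma\le\frac53$ when $n=3$, we get $A_n\le0$, and $\sqrt{\mathfrak q_n}>A_n$ holds trivially. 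For the remaining $\gamma$ one has $A_n>0$, so I would square both sides; the resulting inequality becomes a quadratic in $\mu=\lambda-1$, namely \eq{W_9_n=2_ineq_gamma_gtr_2} (resp.\ \eq{W_9_n=3_ineq_gamma_gtr_5_over_3}), which holds precisely for $\mu$ below an explicit root, i.e.\ for $\lambda<\frac{\gamma\sqrt2}{\gamma+\sqrt2-1}$ (resp.\ $\lambda<\frac{3\gamma-1}{\sqrt3(\gamma-1)+2}$). The argument then closes by observing that each of these thresholds is $\le\lambda^\circ_n(\gamma)$ — which is exactly how $\lambda^\circ_n(\gamma)$ is \emph{defined} in \eq{lambda_0_n=2} and \eq{lambda_0_n=3} — so the hypothesis $1<\lambda<\lambda^\circ_n(\gamma)$ is precisely what is needed.

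The only real obstacle is bookkeeping: the reduction to \eq{q_n_A_n}, the factorizations of $A_n$, the squaring step, and the comparisons of the various $\gamma$-dependent bounds are all elementary but lengthy polynomial manipulations, and one must take care that squaring is legitimate (it is, once the sign of $A_n$ has been pinned down) and that strict inequalities are not lost at the endpoints. No conceptual difficulty arises; the content of the lemma is entirely contained in the computations of Sections \ref{sign_W_9_n=2}--\ref{sign_W_9_n=3}, and the proof is essentially a summary of them.
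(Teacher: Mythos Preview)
Your proposal is correct and follows exactly the paper's approach: the lemma in the paper has no separate proof beyond the line ``Summarizing the analysis above,'' and what you describe is precisely the content of Sections~\ref{sign_W_9_n=2}--\ref{sign_W_9_n=3}. One small slip: in your closing step you write that ``each of these thresholds is $\le\lambda^\circ_n(\gamma)$,'' but the direction you actually need is $\lambda^\circ_n(\gamma)\le\text{threshold}$ (so that $\lambda<\lambda^\circ_n$ forces $\lambda<\text{threshold}$); since by the very definitions \eq{lambda_0_n=2}--\eq{lambda_0_n=3} these coincide in the relevant $\gamma$-ranges, the argument still closes.
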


\begin{remark}\label{useful_rmk}
	For later reference we record that, with $n=2$ or $n=3$, the function 
	$\gamma\mapsto\lambda^\circ_n(\gamma)$ 
	is continuous, strictly increasing, and bounded above by $\sqrt{n}<2$.
	Also, the definitions of $\lambda^\circ_n(\gamma)$, $\hat\lambda_n(\gamma)$ 
	and $\tilde\lambda_n(\gamma)$ give
	\beq\label{hat_circ1}
		1<\lambda^\circ_n(\gamma)\leq\hat\lambda_n(\gamma)
	\eeq
	and
	\beq\label{hat_circ2}
		1<\lambda^\circ_n(\gamma)\leq\tilde\lambda_n(\gamma)
	\eeq
	for all $\gamma>1$.
\end{remark}

\subsection{Presence and locations of $P_4$-$P_9$}\label{P_8_9_locns}
The following lemma gives more precise information about the presence and location 
of the critical points $P_4$-$P_9$. We note that this is the first place where 
we exploit the freedom of choosing $\lambda>1$ sufficiently close to $1$; we will 
do so also in later results.
\begin{lemma}\label{pres_locn}
	Assume that $n=2$ or $n=3$, $\gamma>1$,  $\kappa=\bar\kappa(\gamma,\lambda)$, 
	and $1<\lambda<\lambda^\circ_n(\gamma)$. Then,
 	\beq\label{ineqs_1}
		-1<V_-,\, V_4<V_+<V_*<0,
	\eeq
	where $V_\pm$, $V_4$, $V_*$ are given in \eq{isntr_V_pm},
	\eq{V_4_C_4}, \eq{isntr_V_star}, respectively.
	It follows that all of the critical points $P_4$-$P_9$ are present with
	$P_6,P_8\in L_+$ and $P_7,P_9\in L_-$. Furthermore, we have that 
	\beq\label{P_4_locn}
		V_-<V_4 \qquad\text{and}\qquad C_4<1+V_4
	\eeq 
	whenever $\lambda>1$ is sufficiently close to $1$ (depending on $\gamma$ and $n$). 
	In particular, for $\lambda\gtrapprox1$, $P_4$ ($P_5$) is located below (above) $L_+$ ($L_-$)
	in the upper (lower) half-plane.
\end{lemma}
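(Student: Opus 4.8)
The plan is to establish the chain \eq{ineqs_1} first, read off the presence and location statements as corollaries, and finally obtain the two extra inequalities in \eq{P_4_locn} by a perturbation argument near $\lambda=1$. Since $V_4<V_+$ is exactly Lemma \ref{V_4_less_V_+} and $V_*<0$ is immediate from \eq{isntr_V_star}, what remains for \eq{ineqs_1} is to prove $-1<V_-$ and $V_+<V_*$. The organizing observation is the identity, immediate from \eq{aQ},
\[ \mathfrak Q_n=\mathfrak a_n^2-\frac{8\mu}{m\eps}, \]
which shows that $V_\pm=\frac12(\mathfrak a_n\pm\sqrt{\mathfrak Q_n})$ are the two roots of the monic quadratic $p(V):=V^2-\mathfrak a_n V+\frac{2\mu}{m\eps}$, so that $V_++V_-=\mathfrak a_n$ and $V_+V_-=\frac{2\mu}{m\eps}$. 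Since $\mathfrak Q_n>0$ throughout (Lemma \ref{lambda_star_n_constr}), the roots $V_\pm$ are real, distinct, and depend continuously on $\mu=\lambda-1$; hence an assertion ``the test point $V_0$ lies strictly outside $[V_-,V_+]$, to the right (resp.\ left)'' can be proved by checking $p(V_0)>0$ for all admissible $\mu$ together with the correct sign of $V_0-V_+$ (resp.\ $V_0-V_-$) at one single value of $\mu$, the constant sign then propagating by the intermediate value theorem since $p(V_0)=0$ is never attained.

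I would take $V_0=-1$ and $V_0=V_*$. One computes $p(-1)=1+\mathfrak a_n+\frac{2\mu}{m\eps}=\frac{\mu}{m}>0$ for every $\mu>0$, and
\[ p(V_*)=\frac{2\mu}{n^2m\eps^2}\big[\mu(n\eps-2)+n\eps\big], \]
which is positive for all $\mu>0$ when $n\eps\geq2$ and, when $n\eps<2$, stays positive on the range $0<\mu<\lambda^\circ_n(\gamma)-1$ after inserting the explicit forms \eq{lambda_0_n=2}--\eq{lambda_0_n=3} of $\lambda^\circ_n$. Thus neither $-1$ nor $V_*$ ever enters $[V_-,V_+]$. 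A first-order expansion in $\mu$, using the displayed identity, gives $V_-=-1+\frac{\mu}{m}+O(\mu^2)$ and $V_+-V_*=-\frac{2\mu}{mn\eps}+O(\mu^2)$, so $-1<V_-$ and $V_+<V_*$ hold for small $\mu>0$, hence, by the no-crossing just noted, on the entire admissible range. Together with $V_*<0$ and Lemma \ref{V_4_less_V_+}, this is \eq{ineqs_1}.

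Granted \eq{ineqs_1}, the presence and location claims are bookkeeping. As $V_\pm$ are real, distinct, and $>-1$, \eq{on_L_+-} gives $g(V_\pm)=(1+V_\pm)^2>0$, so $P_6$--$P_9$ are present; and $C_\pm=1+V_\pm>0$, so $P_6,P_8\in L_+$ and $P_7,P_9\in L_-$. For $P_4,P_5$ I would verify $g(V_4)>0$ by a sign count in \eq{g}: $V_4=-\frac{2\lambda}{2+n\eps}$ satisfies $V_4<0$, $1+V_4>0$ (since $\lambda<\lambda^\circ_n(\gamma)\leq\tilde\lambda_n(\gamma)<1+\frac{n\eps}{2}$, cf.\ \eq{hat_circ2}), $\lambda+V_4>0$ (since $\frac{2}{2+n\eps}<1$), and $V_4-V_*<0$ (from $V_4<V_+<V_*$), so the numerator and the denominator of $g(V_4)$ are both negative.

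Finally, \eq{P_4_locn} follows by continuity in $\mu$ as $\mu\downarrow0$. At $\mu=0$ one has $\mathfrak a_n=-1$, $\mathfrak Q_n=1$, $V_*=0$, $V_4=-\frac{2}{2+n\eps}$, so $V_-\to-1$, $V_+\to0$, while $\frac{g(V_4)}{(1+V_4)^2}=\frac{V_4(\lambda+V_4)}{n(1+V_4)(V_4-V_*)}\to\frac{1}{n}$, i.e.\ $\frac{C_4}{1+V_4}\to\frac{1}{\sqrt n}<1$. Since also $-1<-\frac{2}{2+n\eps}$, continuity gives $V_-<V_4$ and $C_4<1+V_4$ for all $\lambda>1$ close enough to $1$ (depending on $n$ and $\gamma$); together with $C_4>0$ and the symmetry \eq{symms} this places $P_4$ below $L_+$ and $P_5$ above $L_-$. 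The step requiring the most care is verifying $p(V_*)>0$ over the \emph{whole} admissible interval: this forces one to distinguish $n\eps\geq2$ from $n\eps<2$ and, through the piecewise definition of $\lambda^\circ_n$, to do further elementary case work tied to $\gamma-2$, $\gamma-3$, $\gamma-\frac53$; each branch collapses --- after substituting the explicit $\lambda^\circ_n$ --- to a manifestly true inequality. This is the ``elementary but lengthy'' computation the text flags for the companion Lemma \ref{V_4_less_V_+} and would presumably abbreviate here as well.
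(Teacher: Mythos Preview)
Your proposal is correct. The presence/location bookkeeping and the perturbation argument for \eq{P_4_locn} essentially match the paper. Where you genuinely differ is in the proof of $-1<V_-$ and $V_+<V_*$: the paper rewrites each as an inequality of the form $\sqrt{\mathfrak Q_n}<\text{(something)}$, verifies the right-hand side is positive (using $\lambda<\hat\lambda_n(\gamma)$), and then squares to reduce to an elementary inequality in $\mu$ and $\eps$. You instead exploit the factorization $\mathfrak Q_n=\mathfrak a_n^2-\frac{8\mu}{m\eps}$ to recognize $V_\pm$ as the roots of the monic quadratic $p(V)=V^2-\mathfrak a_n V+\frac{2\mu}{m\eps}$, evaluate $p$ at the test points $-1$ and $V_*$, and combine this with a first-order expansion in $\mu$ and the intermediate value theorem to pin down on which side of $[V_-,V_+]$ each test point lies. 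Your route is more systematic (one device handles both inequalities) and avoids any square-root manipulation, at the cost of needing the auxiliary continuity/IVT step; the paper's route is more direct but repeats the ``check sign, then square'' maneuver twice. One small remark: in the case $n\eps<2$ you only need $\mu<\frac{n\eps}{2-n\eps}$, and since $n\eps<2$ forces $\gamma<2$ ($n=2$) or $\gamma<\tfrac53$ ($n=3$), you are automatically in the branch $\lambda^\circ_n=\hat\lambda_n$, so the casework you anticipate (tied to $\gamma-2$, $\gamma-3$, $\gamma-\tfrac53$) is lighter than you suggest --- indeed the paper dispatches it with the single phrase ``an easy consequence of $\lambda<\hat\lambda_n(\gamma)$''.
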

\begin{proof} 
	We consider each of the inequalities in \eq{ineqs_1} in turn, from left to right.
	\begin{itemize}
		\item $-1<V_-$: According to \eq{isntr_V_pm} this amounts to
		$\sqrt{\mathcal Q_n}<\mathfrak a_n+2$. We claim that $\mathfrak a_n+2>0$,
		i.e., $(\eps-2)\mu+m\eps>0$. This is immediate
		when $\eps\geq 2$, while for $\eps< 2$ it holds provided $\mu<\frac{m\eps}{2-\eps}$,
		which is an easy consequence of $\lambda<\hat\lambda_n(\gamma)$. According 
		to \eq{hat_circ1} and the assumptions of the lemma, we therefore have 
		$\mathfrak a_n+2>0$ for all cases under consideration. Thus,
		$\sqrt{\mathcal Q_n}<\mathfrak a_n+2$ holds if and only if 
		$\mathfrak Q_n<(\mathfrak a_n+2)^2$. Expanding the right-hand side, 
		using the expressions from \eq{aQ}, and simplifying, yield the equivalent 
		inequality $-(\eps+2)\mu<m\eps$, which is trivially satisfied since $\eps,\mu,m>0$.
		\item $-1<V_4$: According to \eq{V_4_C_4} this amounts to 
		$\lambda<1+\frac{n}{2}(\gamma-1)$. Since 
		$\tilde\lambda_n(\gamma)<1+\frac{n}{2}(\gamma-1)$, this is an immediate 
		consequence of \eq{hat_circ2} and the assumptions of the lemma.
		\item $V_-<V_+$: this is immediate from the definitions of $V_\pm$ 
		(cf.\ \eq{isntr_V_pm}). 
		\item $V_4<V_+$: This is the content of Lemma \ref{V_4_less_V_+}.
		\item $V_+<V_*$: According to \eq{isntr_V_pm}-\eq{aQ} and \eq{isntr_V_star}, 
		this amounts to 
			\beq\label{intermed2}
				mn\eps\sqrt{\mathfrak Q_n} < mn\eps-(n(\eps+2)-4)\mu.
			\eeq
    		As $n(\eps+2)-4>0$ for all cass under consideration, the 
    		right-hand side in \eq{intermed2} is positive provided 
    		$\mu<\frac{mn\eps}{n(\eps+2)-4}$. It is straightforward 
    		to verify that the latter is a consequence of $\lambda<\hat\lambda_n(\gamma)$,
    		which holds according to \eq{hat_circ1} and the assumptions in the lemma. 
    		Squaring both sides of \eq{intermed2}, and simplifying, we therefore get that \eq{intermed2}
		is equivalent to $(2-n\eps)\mu<n\eps$. The latter inequality
    		is trivially satisfied when $2-n\eps\leq 0$, while for $2-n\eps>0$ it is
    		equivalent to 
    		\[\lambda<1+\textstyle\frac{n(\gamma-1)}{2-n(\gamma-1)}.\]
    		Again, this is an easy consequence of $\lambda<\hat\lambda_n(\gamma)$.
		\item $V_*<0$: This follows directly from \eq{isntr_V_star} and the assumptions
		in the lemma.
	\end{itemize}
	Next, recall from \eq{g} that $P_4$-$P_9$ are present provided $g(V_4)>0$ and $g(V_\pm)>0$.
	The latter inequalities follow from the inequalities in \eq{ineqs_1} and the 
	expression for $g(V)$ in \eq{g}. Also, as $V_+>V_->-1$, and $P_6,P_8$ ($P_7,P_9$) are located in 
	the upper (lower) half-plane, it follows from \eq{on_L_+-} that 
	$P_6,P_8\in L_+$ ($P_7,P_9\in L_-$).
	
	Finally, consider the inequalities in \eq{P_4_locn}, with $\gamma>1$ and $n$ fixed.
	From the expressions for $V_-$ and $V_4$ 
	in \eq{isntr_V_pm} and \eq{V_4_C_4}, we get that the inequality $V_-<V_4$ 
	reduces to $-1<-\frac{2}{2+n(\gamma-1)}$, which is trivially satisfied.
	Also, according to \eq{g}-\eq{V_4_C_4} and \eq{ineqs_1},
	the inequality $C_4<1+V_4$ amounts to $V_4(\lambda+V_4)>n(V_4-V_*)(1+V_4)$.
	The limiting version at $\lambda=1$ is the inequality $V_4(1+V_4)>nV_4(1+V_4)$
	which is clearly satisfied for $V_4=V_4|_{\lambda=1}=-\frac{2}{n\eps+2}\in(-1,0)$.
	It follows by continuity that both inequalities in \eq{P_4_locn} are satisfied whenever 
	$\lambda>1$ is sufficiently close to $1$.
\end{proof}
\begin{remark}
	A more detailed calculation reveals that for both $n=2$ and $n=3$, 
	the inequality $V_-<V_4$ fails for pairs $(\gamma,\lambda)$
	with $\gamma\gtrapprox1$ and $\lambda\lessapprox\hat\lambda_n(\gamma)$.
	\end{remark}

We can now address the type of the critical points $P_8$ and $P_9$.

\subsection{Nodality of $P_8,P_9$}\label{P_8_9_nodes}
According to (C3) it remains to determine the values of $\lambda$ for which $P_8$
and $P_9$ are nodes. 
We start
by recording some terminology and results from standard ODE theory (following 
the notation in \cite{laz}). 
The linearization of \eq{CV_ode} at a critical point $P_c=(V_c,C_c)$ is 
\beq\label{linzd_ode}
	\textstyle\frac{d\tilde C}{d\tilde V}=\frac{F_V\tilde V+F_C\tilde C}{G_V\tilde V+G_C\tilde C}
\eeq
where $\tilde V=V-V_c$, $\tilde C=C-C_c$, and the partials $F_V, F_C,G_V,G_C$ are evaluated at $P_c$.
The Wronskian $W$ and discriminant $R^2$ for the ODE \eq{CV_ode} at $P_c$ are defined by
\beq\label{wronsk}
	W:=F_CG_V-F_VG_C
\eeq
and
\beq\label{R_sqrd}
	R^2:=(F_C-G_V)^2+4F_VG_C\equiv (F_C+G_V)^2-4W.
\eeq
In what follows, we write
$R$ for the positive square root of $R^2$ whenever the latter is postive.
Assuming at present that $R^2>0$, we define the quantities
\beq\label{L_12}
	L_{1,2}=\textstyle\frac{1}{2G_C}(F_C-G_V\pm R)
\eeq
and
\beq\label{E_12}
	E_{1,2}=\textstyle\frac{1}{2G_C}(F_C+G_V\pm R),
\eeq
with signs chosen so that 
\beq\label{Es}
	|E_1|<|E_2|.
\eeq
Note that the signs $\pm$ in \eq{L_12} and in \eq{E_12} agree.
We then have that integrals of \eq{linzd_ode} near $P_c$ 
approach one of the curves
\beq\label{gen_crit_point}
	(\tilde C-L_1\tilde V)^{E_1}=\text{constant}\times (\tilde C-L_2\tilde V)^{E_2}.
\eeq
Here, $L_1$ and $L_2$ are  
the {\em primary} and {\em secondary} slopes (or directions), respectively, at $P_c$.
We observe that
\beq\label{W_prod}
	W\equiv E_1E_2G_C^2.
\eeq 
Thus, under the assumption that $R^2>0$, the critical point 
$P_c$ is a node (i.e., $\sgn E_1=\sgn E_2$) when $W>0$, and it is a 
saddle (i.e., $\sgn E_1=-\sgn E_2$) when 
$W<0$. 

In the nodal case it follows from \eq{gen_crit_point} that all but two of the integrals 
of \eq{CV_ode} near $P_c$ 
approach $P_c$ with slope $L_1$. The exceptions are the two integrals  
approaching $P_c$ with slope $L_2$ (one from each side of the line $C-C_c=L_1(V-V_c)$).
Similarly, in the saddle case, exactly four integrals 
of \eq{CV_ode} near $P_c$ 
approach $P_c$, two of them with slope $L_1$ and two with slope $L_2$. 

To guarantee that $P_8$ and $P_9$ are nodes,
we require $R^2>0$ and $W>0$ at $P_8$ and $P_9$. 
Note that, according to \eq{symms}, $W_8\equiv W_9$ and $R^2_8\equiv R^2_9$.
It therefore suffices to consider one of these points, and we use $P_9$.
We begin with the sign of $W_9$; the more intricate condition $R^2>0$
is analyzed in Section \ref{R^2_9}.

\subsubsection{The Wronskian $W_9(n,\gamma,\lambda)$}\label{W_9}
An elegant argument of Lazarus (see p.\ 323 in \cite{laz}) 
shows that the Wronskian $W_9$ factors as follows,
\beq\label{W_9_eqn}
	W_9=W_9(n,\gamma,\lambda)=KC_9^2(V_9-V_4)(V_9-V_7),
\eeq
where the positive constant $K$ is given by
\beq\label{K}
	K=m(n\eps+2).
\eeq
Recalling from Section \ref{P4_P9} that $V_9=V_+>V_-=V_7$,
we get from \eq{W_9_eqn} that $W_9>0$ if and only if $V_+>V_4$,
which holds under the assumptions in Lemma \ref{pres_locn}. We therefore have:
\begin{lemma}\label{W_9_pos}
	Assume that $n=2$ or $n=3$, $\gamma>1$,  $\kappa=\bar\kappa(\gamma,\lambda)$, 
	and $1<\lambda<\lambda^\circ_n(\gamma)$. 
	Assume in addition that the discriminant $R_9^2(n,\gamma,\lambda)$ is 
	strictly positive. Then the Wronskian at $P_9$ satisfies $W_9(n,\gamma,\lambda)>0$.
\end{lemma}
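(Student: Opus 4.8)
The plan is to read off the sign of $W_9$ from the factorization \eq{W_9_eqn}, which has already been recorded, rather than attempting to expand $W_9=F_CG_V-F_VG_C$ directly from \eq{wronsk} and \eq{D}--\eq{F} at $P_9$ (an option, but a messy one that Lazarus's factorization renders unnecessary). Since $m\geq 1$ and $\eps>0$, the constant $K=m(n\eps+2)$ in \eq{K} is strictly positive, so it suffices to show that each of the three remaining factors $C_9^2$, $V_9-V_4$, and $V_9-V_7$ is strictly positive.

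First I would check $C_9^2>0$. By \eq{on_L_+-} one has $C_9^2=C_+^2=(1+V_+)^2$, so this reduces to $V_+\neq-1$; the chain of inequalities \eq{ineqs_1} in Lemma \ref{pres_locn} gives $-1<V_-<V_+$, whence $1+V_+>0$ and $C_9^2>0$. Next, $V_9-V_7=V_+-V_->0$, again by \eq{ineqs_1} (equivalently, because the radicand $\mathfrak Q_n$ in \eq{isntr_V_pm} is strictly positive under the present hypothesis $1<\lambda<\lambda^\circ_n(\gamma)$, cf.\ Lemma \ref{lambda_star_n_constr}). Finally, $V_9-V_4=V_+-V_4>0$ is precisely the conclusion \eq{V_4_less_V_+_eqn} of Lemma \ref{V_4_less_V_+} (also recorded in \eq{ineqs_1}), valid under $1<\lambda<\lambda^\circ_n(\gamma)$. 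The product of the four strictly positive quantities $K$, $C_9^2$, $V_9-V_4$, $V_9-V_7$ is positive, so $W_9(n,\gamma,\lambda)>0$.

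It is worth noting that the extra hypothesis $R_9^2(n,\gamma,\lambda)>0$ plays no role in this argument: Lazarus's factorization \eq{W_9_eqn} is valid irrespective of the sign of the discriminant. The assumption is carried along in the statement only so that $W_9>0$, combined with $R_9^2>0$ and the identity \eq{W_prod} ($W\equiv E_1E_2G_C^2$), will directly yield $\sgn E_1=\sgn E_2$, i.e.\ that $P_8$ and $P_9$ are nodes, once the sign of $R_9^2$ is settled in Section \ref{R^2_9}.

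Consequently there is no substantive obstacle remaining at this point: the real work — establishing the factorization \eq{W_9_eqn} via Lazarus's argument and, more importantly, the ordering $V_4<V_+$ in Lemmas \ref{V_4_less_V_+} and \ref{pres_locn} — has already been done. The only point that deserves a moment's care is confirming $V_+>-1$, so that $C_9^2>0$ rather than merely $\geq 0$; this is immediate from \eq{ineqs_1}.
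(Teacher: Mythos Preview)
Your proof is correct and follows essentially the same route as the paper: both invoke Lazarus's factorization \eq{W_9_eqn} and read off the sign of $W_9$ from the orderings $V_+>V_-$ and $V_+>V_4$ established in Lemma \ref{pres_locn}. Your treatment is slightly more explicit in checking $K>0$ and $C_9^2>0$ separately, and your remark that the hypothesis $R_9^2>0$ is not actually used in the argument is accurate.
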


\subsubsection{The discriminant $R^2_9(n,\gamma,\lambda)$}\label{R^2_9}
As above we assume $n=2$ or $n=3$, $\gamma>1$,  $\kappa=\bar\kappa(\gamma,\lambda)$, 
and $1<\lambda<\lambda^\circ_n(\gamma)$, and the goal is to provide sufficient conditions on 
$\lambda$ to have $R^2_9>0$.

To simplify the notation we drop the subscript `$9$' in most cases, evaluation at $P_9=(V_9,C_9)$ 
being understood. Since the functions $D(V,C)$, $F(V,C)$, $G(V,C)$ 
all vanish at $P_9$, we have
\begin{align}
	C&=-(1+V)\label{at_P_9_1}\\
	C^2&=k_1(1+V)^2-k_2(1+V)+k_3\label{at_P_9_2}\\
	C^2&=\textstyle\frac{V(1+V)(\lambda+V)}{n(V-V_*)}.\label{at_P_9_3}
\end{align}
By using these we have the following expressions for the partials of $F$ and $G$ 
at $P_9$:
\begin{align}
	F_C&=2C^2\label{F_C}\\
	F_V&=C[k_2-2k_1(1+V)]\label{F_V}\\
	G_C&=2nC(V-V_*)\equiv-2V(\lambda+V)\label{G_C}\\
	G_V&=nC^2-[(1+V)(\lambda+V)+V(\lambda+V)+V(1+V)]\equiv
	C[2V+\lambda-n(1+V_*)].\label{G_V}
\end{align}
Recalling that $R^2=(F_C+G_V)^2-4W$, and the expression
\eq{W_9_eqn} for $W$, a direct calculation shows that $R^2_9=R^2_9(n,\gamma,\lambda)>0$
holds if and only if
\begin{align*}
	&2[(n\eps^2+2m\eps-4)\mu-m\eps(n\eps-2)]\sqrt{\mathfrak q_n(\gamma,\lambda)}\nn\\
	&\qquad\qquad <[-2n\eps^3+(9n-5)\eps^2-4(n-3)\eps+4(n-5)]\mu^2\nn\\
	&\qquad\qquad\quad +2m\eps(\eps+2)(2n\eps-n+3)\mu
	-m\eps^2[2mn\eps-(n^2-2n+5)].
\end{align*}	
Thus,
\begin{itemize}
	\item for $n=2$, $R^2_9>0$ if and only if
	\begin{align}
		4(\gamma-2)[(\gamma+1)\mu-(\gamma-1)]\sqrt{\mathfrak q_2(\gamma,\lambda)}
		&<(-4\gamma^3+25\gamma^2-34\gamma+1)\mu^2\nn\\
		&\quad+2(\gamma^2-1)(4\gamma-3)\mu-(4\gamma-9)(\gamma-1)^2,
		\label{R^2_pos_n=2}
	\end{align}
	where 
	\[\mathfrak q_2(\gamma,\lambda)=(\gamma-3)^2\mu^2-2(\gamma^2-1)\mu+(\gamma-1)^2;\]
\end{itemize}
and, 
\begin{itemize}
	\item for $n=3$, $R^2_9>0$ if and only if
	\begin{align}
		(3\gamma-5)[(\gamma+1)\mu-2(\gamma-1)]\sqrt{\mathfrak q_3(\gamma,\lambda)}
		&<-(3\gamma-5)(\gamma^2-5\gamma+2)\mu^2\nn\\
		&\quad+12(\gamma-1)^2(\gamma+1)\mu-4(3\gamma-5)(\gamma-1)^2,
		\label{R^2_pos_n=3}
	\end{align}
	where 
	\[\mathfrak q_3(\gamma,\lambda)=(\gamma-3)^2\mu^2-4(\gamma^2-1)\mu+4(\gamma-1)^2.\]
\end{itemize}
While numerical plots indicate that \eq{R^2_pos_n=2} (\eq{R^2_pos_n=3}, respectively) 
holds whenever $\gamma>1$ and $1<\lambda<\lambda^\circ_2(\gamma)$ 
($1<\lambda<\lambda^\circ_3(\gamma)$, respectively), we have not been able to prove this.
However, the following weaker assertion will suffice for our needs.

\begin{lemma}\label{R^2_pos_suff} 
	Assume that $n=2$ or $n=3$, $\gamma>1$,  $\kappa=\bar\kappa(\gamma,\lambda)$, 
	and $1<\lambda<\lambda^\circ_n(\gamma)$. 
	Then $R^2_9(n,\gamma,\lambda)>0$ whenever $\lambda>1$ is sufficiently close to $1$
	(depending on $\gamma$ and $n$).
\end{lemma}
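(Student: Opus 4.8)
The plan is to push $\lambda$ down to $1$, i.e.\ $\mu:=\lambda-1\downarrow 0$, and to determine the sign of $R^2_9$ in that limit, treating $n=2$ and $n=3$ separately. Fix $n\in\{2,3\}$ and $\gamma>1$. By \eqref{R^2_pos_n=2}--\eqref{R^2_pos_n=3}, under the standing assumptions of the lemma the inequality $R^2_9(n,\gamma,\lambda)>0$ is equivalent to
\[\Phi_n(\mu):=R_n(\mu)-L_n(\mu)>0,\]
where $L_n(\mu)$ and $R_n(\mu)$ denote, respectively, the left- and right-hand sides of the relevant inequality, now viewed as functions of $\mu$ alone. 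Since $\mathfrak q_n(\gamma,1)=(n-1)^2(\gamma-1)^2>0$ by \eqref{q_n}, the radical $\sqrt{\mathfrak q_n(\gamma,\lambda)}$ depends analytically on $\mu$ near $\mu=0$, hence so does $\Phi_n$; moreover $\lambda^\circ_n(\gamma)>1$ by Remark~\ref{useful_rmk}, so we have a genuine interval $\mu\in(0,\lambda^\circ_n(\gamma)-1)$ at our disposal. (Alternatively one can work directly with $R^2_9=(F_C+G_V)^2-4W_9$ evaluated at $P_9$ via \eqref{W_9_eqn} and \eqref{F_C}--\eqref{G_V}, using that $V_\pm$, $V_*$, and $C_9^2=g(V_+)$ all extend analytically across $\mu=0$ — the apparent $0/0$ in $g(V_+)$ being removable since $V_+$ and $V_+-V_*$ vanish to the same linear order as $\mu\to0$. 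This route gives the clean identity $R^2_9\big|_{\mu=0}=(n-3)^2$, which makes the dichotomy below transparent.)

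For $n=2$ the conclusion is immediate. Putting $\mu=0$ in \eqref{R^2_pos_n=2} and using $\sqrt{\mathfrak q_2(\gamma,1)}=\gamma-1$ gives $L_2(0)=-4(\gamma-2)(\gamma-1)^2$ and $R_2(0)=-(4\gamma-9)(\gamma-1)^2$, so that
\[\Phi_2(0)=(\gamma-1)^2>0.\]
By continuity of $\Phi_2$, the strict inequality $\Phi_2(\mu)>0$ persists for all sufficiently small $\mu>0$, which is the assertion for $n=2$.

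For $n=3$ the leading term degenerates: with $\mu=0$ and $\sqrt{\mathfrak q_3(\gamma,1)}=2(\gamma-1)$ one finds $L_3(0)=R_3(0)=-4(3\gamma-5)(\gamma-1)^2$, hence $\Phi_3(0)=0$ (equivalently $R^2_9|_{\mu=0}=(n-3)^2=0$). One must therefore descend to first order. Differentiating \eqref{R^2_pos_n=3} in $\mu$ at $\mu=0$, using $\tfrac{d}{d\mu}\sqrt{\mathfrak q_3(\gamma,\lambda)}\big|_{\mu=0}=-(\gamma+1)$, yields $L_3'(0)=4(3\gamma-5)(\gamma-1)(\gamma+1)$ and $R_3'(0)=12(\gamma-1)^2(\gamma+1)$, so that
\[\Phi_3'(0)=4(\gamma-1)(\gamma+1)\big[3(\gamma-1)-(3\gamma-5)\big]=8(\gamma^2-1)>0\qquad\text{for }\gamma>1.\]
Since $\Phi_3(0)=0$ and $\Phi_3'(0)>0$, there is $\delta=\delta(\gamma)>0$ with $\Phi_3(\mu)>0$ on $(0,\delta)$; intersecting with $(0,\lambda^\circ_3(\gamma)-1)$ gives $R^2_9(3,\gamma,\lambda)>0$ for all $\lambda>1$ sufficiently close to $1$, completing the proof.

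The only real obstacle is the case $n=3$: the $\mu^0$-coefficient of $R^2_9$ there vanishes identically in $\gamma$, so a soft continuity argument is insufficient and one is forced to extract and sign the first-order coefficient — the short computation of $\Phi_3'(0)$ above. This degeneracy is exactly why the lemma is stated only for $\lambda$ near $1$ and not on the full range $(1,\lambda^\circ_n(\gamma))$, where numerics suggest $R^2_9>0$ still holds but which we do not prove.
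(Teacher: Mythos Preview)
Your proof is correct and follows essentially the same route as the paper's own argument: for $n=2$ you evaluate both sides of \eqref{R^2_pos_n=2} at $\mu=0$ and observe a strict inequality, while for $n=3$ you note that the two sides of \eqref{R^2_pos_n=3} coincide at $\mu=0$ and then compare first derivatives there. The computations match the paper's (your $\Phi_3'(0)>0$ is exactly the paper's inequality $4(3\gamma-5)(\gamma^2-1)<12(\gamma-1)(\gamma^2-1)$); your additional remark that $R^2_9\big|_{\mu=0}=(n-3)^2$ is a nice way to explain the $n=2$/$n=3$ dichotomy but is not in the paper.
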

\begin{proof}
	For $n=2$, this follows directly by observing that along $\lambda\equiv 1$, 
	\eq{R^2_pos_n=2} reduces to $-4(\gamma-2)(\gamma-1)^2<-(4\gamma-9)(\gamma-1)^2$,
	which is trivially satisfied for all $\gamma>1$, and the claim follows by continuity.
	
	On the other hand, when $n=3$, the two sides of \eq{R^2_pos_n=3} agree
	along $\lambda\equiv 1$. Letting $\mathfrak l(\gamma,\lambda)$ and 
	$\mathfrak r(\gamma,\lambda)$ denote the left-hand and right-hand sides
	in \eq{R^2_pos_n=3}, respectively, it thus suffices to verify the strict inequality 
	\[\left.\textstyle\frac{\partial \mathfrak l}{\partial \lambda}\right|_{\lambda=1}
	<\left.\textstyle\frac{\partial \mathfrak r}{\partial \lambda}\right|_{\lambda=1}.\]
	A calculation shows that the latter inequality amounts to
	$4(3\gamma-5)(\gamma^2-1)<12(\gamma-1)(\gamma^2-1)$, 
	which is trivially satisfied for all $\gamma>1$, and again the claim follows by continuity.
\end{proof}
\begin{definition}\label{relevant}
	For $n=2$ or $n=3$, $\gamma>1$, and $\kappa=\bar\kappa(\gamma,\lambda)$, 
	the pair $(\gamma,\lambda)$ is called {\em relevant} provided
	$1<\lambda<\lambda^\circ_n(\gamma)$, and 
	$R^2_9(n,\gamma,\lambda)>0$. 
\end{definition}
Note that, according to Lemma \ref{W_9_pos},
$W_9(n,\gamma,\lambda)>0$ whenever $(\gamma,\lambda)$ is relevant.
By combining Lemma \ref{R^2_pos_suff}  and Lemma \ref{W_9_pos} with 
Proposition \ref{constraints_C1_C_2_and _2nd_C3}, we obtain:
\begin{proposition}\label{rel_prop}
	Assume $n=2$ or $n=3$, $\gamma>1$, and $\kappa=\bar\kappa(\gamma,\lambda)$.
	Then the pair $(\gamma,\lambda)$ is relevant whenever $\lambda>1$ is sufficiently 
	close to $1$, and in this case the constraints (C1), (C2), (C3) are all satisfied.
	In particular, $P_8$ and $P_9$ are nodes, and $P_{\pm\infty}$ are saddles.
\end{proposition}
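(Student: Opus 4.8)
The plan is to assemble the proposition from results already established; there is no genuinely new argument here, only the bookkeeping of the chain of upper bounds on $\lambda$. First I would unwind Definition \ref{relevant}: with $\kappa=\bar\kappa(\gamma,\lambda)$, the pair $(\gamma,\lambda)$ is \emph{relevant} exactly when $1<\lambda<\lambda^\circ_n(\gamma)$ and $R^2_9(n,\gamma,\lambda)>0$. The first inequality holds for $\lambda$ in some interval $(1,1+\delta_1)$ because $\lambda^\circ_n(\gamma)>1$ strictly (Remark \ref{useful_rmk}, e.g.\ \eq{hat_circ1}); the second holds for $\lambda$ in some interval $(1,1+\delta_2)$ by Lemma \ref{R^2_pos_suff}. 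Intersecting the two intervals produces $1<\lambda<\lambda_0(\gamma,n)$ on which $(\gamma,\lambda)$ is relevant, which is the first assertion.

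Next I would verify the three constraints on that interval. For (C1), (C2), and the part of (C3) asserting that $P_{\pm\infty}$ are saddles, I would invoke Proposition \ref{constraints_C1_C_2_and _2nd_C3}: since $\lambda^\circ_n(\gamma)\leq\tilde\lambda_n(\gamma)$ by \eq{hat_circ2}, the standing hypotheses \eq{assumpns_1} hold throughout $1<\lambda<\lambda^\circ_n(\gamma)$, giving (C1) and (C2), and since $\kappa=\bar\kappa$ forces $\alpha=0$, hence $A=2>0$ in \eq{ab}, the criterion recorded just after \eq{wz_ode} makes $P_{\pm\infty}$ saddles. For the remaining part of (C3), that $P_8$ and $P_9$ are nodes, I would appeal to the standard classification of Section \ref{P_8_9_nodes}: by the identity \eq{W_prod}, a critical point with $R^2>0$ is a node exactly when its Wronskian is positive. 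Relevance supplies $R^2_9>0$; the hypotheses of Lemma \ref{W_9_pos} are precisely $1<\lambda<\lambda^\circ_n(\gamma)$ together with $R^2_9>0$ (i.e.\ relevance), so it yields $W_9>0$, whence $P_9$ is a node. By the symmetry \eq{symms} one has $W_8=W_9>0$ and $R^2_8=R^2_9>0$, so $P_8$ is a node as well. (Presence of $P_8$ and $P_9$, needed for these quantities to be defined, is part of Lemma \ref{pres_locn}.) This completes (C3), and with it the proposition.

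As for the main difficulty: analytically there is none. The one point to be careful about is that all the successively imposed thresholds --- $\tilde\lambda_n(\gamma)$, $\hat\lambda_n(\gamma)$, $\lambda^*_n(\gamma)$, $\lambda^\circ_n(\gamma)$ --- together with the two ``$\lambda$ sufficiently close to $1$'' conditions coming from Lemma \ref{pres_locn} and Lemma \ref{R^2_pos_suff}, simultaneously leave room near $\lambda=1$; they do, since each $\lambda^\circ_n(\gamma)$ exceeds $1$ and each ``close to $1$'' condition holds on an interval of the form $(1,1+\delta)$, so the minimum of these finitely many thresholds is still strictly greater than $1$ and may be taken as $\lambda_0(\gamma,n)$. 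The only subtlety worth flagging is that the node classification requires $R^2>0$ rather than merely $W>0$ (otherwise an improper node or a focus could occur), which is exactly why ``relevance'' was defined to include the condition $R^2_9>0$.
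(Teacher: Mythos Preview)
Your proposal is correct and matches the paper's approach exactly: the paper gives no separate proof for this proposition but simply states that it follows ``by combining Lemma \ref{R^2_pos_suff} and Lemma \ref{W_9_pos} with Proposition \ref{constraints_C1_C_2_and _2nd_C3},'' which is precisely the assembly you carry out. Your additional care in checking that the finitely many $\lambda$-thresholds leave a nontrivial interval above $1$ is appropriate bookkeeping; the only minor overstatement is that the ``$\lambda$ close to $1$'' clause in Lemma \ref{pres_locn} pertains to \eq{P_4_locn} and is not needed for the mere presence of $P_8,P_9$, which already follows from $1<\lambda<\lambda^\circ_n(\gamma)$ via \eq{ineqs_1}.
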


\section{Further properties of critical points}\label{further_props}
To construct continuous similarity flows we need to
determine the primary and secondary slopes at $P_8$ and $P_9$
under the assumptions of Proposition \ref{rel_prop}.
We shall also need to compare these with the slopes of the zero-levels 
$\mathcal F=\{F=0\}$ and $\mathcal G=\{G=0\}$ at $P_8$ and $P_9$. 
In addition, it will be useful to determine the types of the  
two critical points $P_4$ and $P_5$ (located off the critical lines $L_\pm$).
Recalling the symmetries in \eq{symms}, in particular the invariance 
of $W$ and $R^2$ under $(V,C)\mapsto(V,-C)$, it suffices to analyze 
the critical points in the lower half-plane.

Throughout this section it is assumed in all calculations and statements 
that $(\gamma,\lambda)$ is relevant
according to Definition \ref{relevant}; in particular, $R^2_9>0$ and $W_9>0$.

\subsection{Primary and secondary slopes at $P_9$}\label{prim_secnd_P_9}
In the following analysis we suppress the subscript `9' in most cases, 
evaluation at $P_9$ being understood.
\begin{lemma}\label{prim_sec_direcns}
	The primary and secondary slopes at $P_9$ are
	\beq\label{prim_sec_slopes}
		L_1=\textstyle\frac{1}{2G_C}(F_C-G_V- R)
		\qquad\text{and}\qquad
		L_2=\textstyle\frac{1}{2G_C}(F_C-G_V+ R),
	\eeq
	respectively.
\end{lemma}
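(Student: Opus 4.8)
The plan is to determine which of the two root pairs $(L_1,E_1)$ and $(L_2,E_2)$ constitutes the \emph{primary} pair at $P_9$, i.e.\ to identify the sign choice $\pm$ in \eq{L_12}--\eq{E_12} that corresponds to the direction along which all but two trajectories of \eq{CV_ode} approach $P_9$. By the general theory recorded above (see \eq{gen_crit_point} and \eq{Es}), the primary slope is the one paired with the exponent of \emph{larger} absolute value, $E_2$, and since $P_9$ is a node (with $R^2_9>0$ and $W_9>0$ by relevance, cf.\ Proposition \ref{rel_prop} and Lemma \ref{W_9_pos}), both exponents $E_1,E_2$ have the same sign. Thus the task reduces to deciding which of
\[
E_{\pm}=\textstyle\frac{1}{2G_C}(F_C+G_V\pm R)
\]
has the larger absolute value; equivalently, whether $E_2$ (and hence the primary slope) corresponds to the $+R$ or the $-R$ choice. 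Once that sign is pinned down, one reads off $L_1$ and $L_2$ from \eq{L_12} with the same sign convention, which is exactly the claimed formula \eq{prim_sec_slopes}.

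The first step is therefore to compute the relevant quantities $F_C+G_V$, $R$, and $G_C$ at $P_9$ and to determine their signs. The partials $F_C,F_V,G_C,G_V$ at $P_9$ are already given in \eq{F_C}--\eq{G_V}; in particular $G_C\equiv -2V(\lambda+V)$ and $F_C=2C^2>0$. Using $-1<V_9=V_+<V_*<0<\lambda$ (from Lemma \ref{pres_locn}) one checks that $\lambda+V>0$ and $V<0$, so $G_C>0$. Next one needs the sign of $F_C+G_V$; substituting \eq{F_C} and \eq{G_V} gives $F_C+G_V=C[2C+2V+\lambda-n(1+V_*)]$, and since $C=-(1+V)<0$ at $P_9\in L_-$, this has the opposite sign to $2C+2V+\lambda-n(1+V_*)$; I would simplify this bracket using $C=-(1+V)$ and \eq{isntr_V_star} and establish its sign, at least for $\lambda$ near $1$ — along $\lambda\equiv 1$ it reduces to a manageable rational expression in $\gamma$ and $V_4$-type quantities, and continuity then handles $\lambda\gtrsim 1$. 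The decisive comparison is between $|F_C+G_V+R|$ and $|F_C+G_V-R|$: since $R>0$, whichever of $F_C+G_V\pm R$ lies further from $0$ determines $E_2$. If $F_C+G_V$ and $R$ have the property that $F_C+G_V-R<0<F_C+G_V+R$ has the larger magnitude on the negative side — equivalently if $F_C+G_V<0$ — then $E_2$ takes the $-R$ sign, giving $L_1=\frac{1}{2G_C}(F_C-G_V-R)$ as in \eq{prim_sec_slopes}.

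The main obstacle I anticipate is precisely controlling the sign of $F_C+G_V$ at $P_9$ (and, relatedly, verifying $|E_1|<|E_2|$ picks out the $-R$ branch rather than merely that the node is nondegenerate): this is the one genuinely computational point, and it may not hold for all relevant $(\gamma,\lambda)$ but only for $\lambda$ sufficiently close to $1$, in which case — consistent with how Lemmas \ref{R^2_pos_suff} and \ref{pres_locn} are stated — I would prove it along $\lambda\equiv 1$, where the expressions collapse (using $V_9|_{\lambda=1}$, $V_*|_{\lambda=1}=0$, $C_9|_{\lambda=1}=-(1+V_9)$), and extend by continuity. Modulo that sign determination, the rest is bookkeeping: having fixed the sign, \eq{L_12} immediately yields the stated $L_1$ and $L_2$, completing the proof. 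I should also double-check the degenerate possibility $F_C+G_V=0$ is excluded (it would force $E_1=-E_2$, contradicting $W_9>0$), which it is under relevance.
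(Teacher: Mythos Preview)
Your proposal contains two errors that happen to cancel, so you reach the correct formula by accident.

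First, you have the convention backwards. The paper's setup in \eq{L_12}--\eq{gen_crit_point} pairs $L_1$ with $E_1$ via the \emph{same} sign choice, and declares $L_1$ the primary direction with $|E_1|<|E_2|$. From $(\tilde C-L_1\tilde V)^{E_1}=K(\tilde C-L_2\tilde V)^{E_2}$ one gets $\tilde C-L_1\tilde V\propto(\tilde C-L_2\tilde V)^{E_2/E_1}$ with $|E_2/E_1|>1$, so generic trajectories approach along $\{\tilde C=L_1\tilde V\}$. Thus the primary slope is the one paired with the \emph{smaller} exponent $E_1$, not with $E_2$ as you assert.

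Second, the sign you are aiming for is wrong. Your own expression $F_C+G_V=C[2C+2V+\lambda-n(1+V_*)]$, after substituting $C=-(1+V)$, collapses to $F_C+G_V=C[\lambda-2-n(1+V_*)]$. For any relevant $(\gamma,\lambda)$ one has $\lambda<\lambda_n^\circ(\gamma)<\sqrt n<2$ (Remark~\ref{useful_rmk}) and $1+V_*>0$ (\eq{1+V*}), so the bracket is negative; since $C_9<0$, this gives $F_C+G_V>0$, not $<0$. No $\lambda\to1$ limiting argument is needed. With $F_C+G_V>0$ and $R^2=(F_C+G_V)^2-4W<(F_C+G_V)^2$, both $F_C+G_V\pm R$ are positive, the smaller one is $F_C+G_V-R$, hence $E_1$ carries the $-R$ sign; by the (correct) pairing, so does $L_1$, which is exactly \eq{prim_sec_slopes}.

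If you had correctly computed $F_C+G_V>0$ but kept your reversed convention, you would have concluded $L_1=\frac{1}{2G_C}(F_C-G_V+R)$, the wrong formula. The proposal as written only succeeds because the misread convention and the miscomputed sign cancel; neither step is sound on its own.
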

\begin{proof}
	The primary and secondary slopes are given by \eq{L_12} and \eq{E_12}.
	Recall that the signs $\pm$ in these equations agree. We claim that 
	\beq\label{F_C+G_V_pos}
		F_C+G_V>0.
	\eeq
	Assuming this, it follows from \eq{R_sqrd}, $W>0$, and \eq{Es},
	that $E_1$ is given by choosing the `$-$' sign in \eq{E_12}, and \eq{prim_sec_slopes}
	follows.
	
	It only remains to argue for \eq{F_C+G_V_pos}. Using the expressions in \eq{F_C}, 
	\eq{G_V}, together with \eq{at_P_9_1}, we have
	\beq\label{F_C+G_V}
		F_C+G_V=2C^2+C[2V+\lambda-n(1+V_*)]=C[\lambda-2-n(1+V_*)].
	\eeq
	Since $(\gamma,\lambda)$ is assumed relevant, \eq{1+V*} gives $1+V_*>0$,
	while the first part of Remark \ref{useful_rmk} gives $\lambda<2$. 
	As $C=C_9<0$, \eq{F_C+G_V} therefore gives $F_C+G_V>0$.
\end{proof}
The next result provides the relative positions near $P_9$ of the level sets 
$\mathcal F$, $\mathcal G$ (whose slopes are -$\textstyle\frac{F_V}{F_C}$
and $-\textstyle\frac{G_V}{G_C}$, respectively)
and the straight lines through $P_9$ with slopes $L_{1,2}$.
\begin{lemma}\label{rel_locns_P9}
	At $P_9$ we have
	\beq\label{rel_locns}
		-\textstyle\frac{G_V}{G_C}<L_1<-\textstyle\frac{F_V}{F_C}
		<-1<0<L_2.
	\eeq
\end{lemma}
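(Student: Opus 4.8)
The identity $R^2=(F_C+G_V)^2-4W$ together with the already-established facts $W_9>0$, $R_9^2>0$, and (from Lemma~\ref{prim_sec_direcns}) $F_C+G_V>0$ at $P_9$ will be the workhorse. These give $0<R<F_C+G_V$, hence $L_1$ and $L_2$ have the same sign as $\tfrac{1}{2G_C}(F_C-G_V)\pm\tfrac{R}{2G_C}$ with both offsets small relative to the main term, while $E_1E_2G_C^2=W>0$ forces $L_1,L_2$ to satisfy $L_1L_2 = \tfrac{1}{G_C^2}[(F_C-G_V)^2-R^2]/4\cdot(\text{sign check})$; more usefully, the product $(\tilde C - L_1\tilde V)(\tilde C-L_2\tilde V)$ expansion shows $L_1L_2 = -\tfrac{F_V}{G_C}$ and $L_1+L_2 = \tfrac{F_C-G_V}{G_C}$ (Vieta on the quadratic $G_C\ell^2-(F_C-G_V)\ell-F_V=0$ whose roots are $L_{1,2}$). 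These two relations, plus the signs of $G_C$, $F_V$, $F_C$ computed at $P_9$ via \eqref{F_C}--\eqref{G_V}, will pin down the ordering.

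\textbf{Step 1: sign inventory at $P_9$.} Using $C=C_9<0$, $V=V_9=V_+\in(-1,0)$ (Lemma~\ref{pres_locn}), and $\lambda>1$, I read off from \eqref{F_C}: $F_C=2C^2>0$; from \eqref{G_C}: $G_C=-2V(\lambda+V)$, and since $V<0$ while $\lambda+V=\lambda+V_+>0$ (because $V_+>V_*>-1$ and in fact $\lambda+V_+>\lambda-1>0$), we get $G_C>0$; from \eqref{F_V}: $F_V=C[k_2-2k_1(1+V)]$, whose sign I determine from the bracket — I expect $k_2-2k_1(1+V_+)<0$ for $\lambda\gtrsim1$ (since at $\lambda=1$, $k_2=\tfrac{(n-1)(\gamma-1)}{2}$, $k_1(1+V_+)>k_1\cdot$ something, needs checking), giving $F_V>0$; and $G_V=C[2V+\lambda-n(1+V_*)]$ with the bracket's sign following from $2V_++\lambda<2$ versus $n(1+V_*)>0$.

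\textbf{Step 2: deduce the chain.} Once the four partials' signs are fixed, $-\tfrac{F_V}{F_C}<0$ and $-\tfrac{G_V}{G_C}$ has a definite sign; $L_2>0$ follows from $F_C-G_V+R>0$ (since $F_C-G_V$ and $R$ — need $F_C-G_V>0$ or at least $|F_C-G_V|<R$, which comes from $R^2-(F_C-G_V)^2=-4F_VG_C<0$... wait, that would give $|F_C-G_V|>R$; I must instead use $R^2-(F_C+G_V)^2=-4W<0$ carefully and the explicit sign of $F_C-G_V$). The bound $-\tfrac{F_V}{F_C}<-1$ is equivalent to $F_V>F_C$, i.e.\ $C[k_2-2k_1(1+V)]>2C^2$, i.e.\ (dividing by $C<0$) $k_2-2k_1(1+V)<2C=-2(1+V)$, i.e.\ $k_2<2(k_1-1)(1+V)$; using \eqref{sum_k} ($k_1-k_2+k_3=\lambda$) and the explicit $k_i$ this should reduce to a transparent inequality valid under \eqref{assumpns_1}. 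The remaining inequality $-\tfrac{G_V}{G_C}<L_1$ is then forced by Vieta: since $L_1+L_2=\tfrac{F_C-G_V}{G_C}$ and $-\tfrac{G_V}{G_C} = \tfrac{F_C-G_V}{G_C}-\tfrac{F_C}{G_C}$, the claim $-\tfrac{G_V}{G_C}<L_1=L_1+L_2-L_2$ becomes $L_2-\tfrac{F_C}{G_C}<0$, i.e.\ $L_2<\tfrac{F_C}{G_C}$; since $\tfrac{F_C}{G_C}=-F_V/(G_C L_1 L_2)\cdot(\dots)$, hmm — cleaner: $\tfrac{F_C}{G_C}$ is a root of nothing in particular, so instead check $L_2<\tfrac{F_C}{G_C}$ directly by plugging $\ell=\tfrac{F_C}{G_C}$ into $G_C\ell^2-(F_C-G_V)\ell-F_V$ and checking its sign relative to the leading coefficient.

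\textbf{Main obstacle.} The genuinely delicate step is \emph{the sign of $F_V$ and the inequality $F_V>F_C$} (equivalently $-F_V/F_C<-1$), because these require controlling the bracket $k_2-2k_1(1+V_+)$ where $V_+$ is the messy root \eqref{isntr_V_pm}. I anticipate handling this the same way Lemma~\ref{pres_locn} handles its last two inequalities: evaluate the target inequality in the limit $\lambda\to1^+$ (where $V_+|_{\lambda=1}$ is explicit, and the $k_i$ simplify since $\mu=0$ kills $k_3$ and part of $k_2$), verify it holds strictly there, and invoke continuity to extend to $\lambda$ near $1$ — which is exactly the regime Proposition~\ref{rel_prop} already restricts us to. Alternatively, since at $P_9$ both \eqref{at_P_9_2} and \eqref{at_P_9_3} hold, I can eliminate $V_+$ algebraically: use \eqref{at_P_9_1}--\eqref{at_P_9_3} to rewrite every partial purely in terms of $V=V_9$ and the parameters, turning each inequality in \eqref{rel_locns} into a polynomial inequality in $V_9\in(-1,V_*)$ that can be checked on that interval using only the already-known location bounds \eqref{ineqs_1}.
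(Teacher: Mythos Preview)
Your overall framework is sound, and the Vieta perspective --- viewing $L_1,L_2$ as the roots of $G_C\ell^2-(F_C-G_V)\ell-F_V=0$ --- is a clean alternative to the paper's direct manipulations. In fact it streamlines two of the inequalities you treat as hard: once $F_V>0$ and $G_C>0$ are known, $L_1L_2=-F_V/G_C<0$ gives $L_1<0<L_2$ in one line; and plugging $\ell=-F_V/F_C$ into the quadratic yields $-F_VW/F_C^2<0$, so $-F_V/F_C\in(L_1,L_2)$, giving $L_1<-F_V/F_C$ immediately. The paper instead argues each inequality separately via $R$: for $-G_V/G_C<L_1$ it reduces to $R<F_C+G_V$, which follows from $(F_C+G_V)^2-R^2=4W>0$ together with $F_C+G_V>0$ (your Lemma~\ref{prim_sec_direcns} input); for $L_2>0$ it reduces to $F_C-G_V+R>0$, which holds because $R^2=(F_C-G_V)^2+4F_VG_C>(F_C-G_V)^2$. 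Note that your Step~2 sign computation here is inverted: $R^2-(F_C-G_V)^2=+4F_VG_C>0$, not $<0$, so $R>|F_C-G_V|$ and $L_2>0$ follows at once --- there is no obstacle.

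Where you misjudge the difficulty is the inequality $-F_V/F_C<-1$, i.e.\ $k_2<2(k_1-1)(1+V_+)$. You flag this as the ``main obstacle'' and plan a $\lambda\to1^+$ continuity argument, but the paper handles it for \emph{all} relevant $(\gamma,\lambda)$ by direct substitution: with $k_1-1=\tfrac{(n-1)(\gamma-1)}{2}$, $k_2=\tfrac{(n-1)(\gamma-1)+(\gamma-3)(\lambda-1)}{2}$, and $1+V_+=\tfrac12(\mathfrak a_n+2+\sqrt{\mathfrak Q_n})$ from \eqref{isntr_V_pm}--\eqref{aQ}, the inequality collapses algebraically to $m(\gamma-1)\sqrt{\mathfrak Q_n}>0$, which is trivial. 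So no perturbative argument is needed, and the lemma holds throughout the relevant regime, not just for $\lambda\gtrsim1$. Your second proposed route (eliminate $V_+$ via \eqref{at_P_9_1}--\eqref{at_P_9_3}) would also work but is more laborious than simply plugging in the explicit $V_+$.
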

\begin{proof}
	We first observe that \eq{G_C}${}_1$, Lemma \ref{pres_locn}, 
	and $C<0$ give
	\beq\label{G_C_pos}
		G_C=2nC(V-V_*)>0.
	\eeq
	Using the expression for $L_1$ in \eq{prim_sec_slopes} we therefore get that 
	the leftmost inequality in \eq{rel_locns} holds if and only if 
	$F_C+G_V>R$. We showed in the proof of Lemma \ref{prim_sec_direcns}
	that $F_C+G_V>0$, and we have $R>0$ by assumption. The  
	inequality $F_C+G_V>R$ is therefore a direct consequence of \eq{R_sqrd}${}_2$ and $W>0$.
	
	Next consider the inequality $-\frac{F_V}{F_C}<-1$, which, according to \eq{F_C}, 
	is equivalent to $F_V>F_C$. Substituting the expressions in \eq{F_C}-\eq{F_V},
	and using \eq{at_P_9_1}, we obtain the equivalent inequality $k_2<2(k_1-1)(1+V)$.
	Using the expressions for $k_1,k_2$ in \eq{ks}, together with the expression for $V_9=V_+$ given
	by \eq{isntr_V_pm}-\eq{aQ}, the last inequality reduces to 
	$m(\gamma-1)\sqrt{\mathfrak Q_n}>0$, which trivially holds.
	
	Next, using that $F_C>0$, $G_C>0$ (by \eq{F_C} and \eq{G_C_pos})
	and \eq{prim_sec_slopes}${}_1$, we have that $L_1<-\textstyle\frac{F_V}{F_C}$
	holds if and only if 
	\beq\label{2nd_ineq}
		F_C(F_C-G_V)+2F_VG_C<F_CR.
	\eeq
	This inequality is trivially satisfied if the left-hand side is non-positive.
	If not we square both sides of \eq{2nd_ineq} and use the first expression for 
	$R^2$ in \eq{R_sqrd} to simplify, and obtain the 
	equivalent inequality 
	\beq\label{intermed3}
		F_VG_C(F_CG_V-F_VG_C)\equiv F_VG_CW>0.
	\eeq
	We just established $F_V>F_C>0$ and $G_C>0$ above, while $W>0$ by assumption.
	Thus, \eq{intermed3} holds, and this verifies $L_1<-\textstyle\frac{F_V}{F_C}$ for 
	all cases under consideration.
	
	Finally, according to \eq{prim_sec_slopes}${}_2$ and \eq{G_C_pos}, the 
	inequality $L_2>0$ reduces to $F_C-G_V+R>0$ which, by \eq{R_sqrd}${}_1$,
	amounts to
	\[F_C-G_V+\sqrt{(F_C-G_V)^2+4F_VG_C}>0.\] 
	As $F_V>0$, $G_C>0$, this last inequality holds, establishing the rightmost 
	inequality in \eq{rel_locns}.
\end{proof}
The situation in the lower half-plane is given schematically in Figure \ref{Figure_1}.
Observe that Figure \ref{Figure_1} does not include certain parts of the zero-levels $\{F=0\}$ 
and $\{G=0\}$ (roughly those to the left of $P_5$; these are not relevant for what follows). 
Also, while Lemma \ref{rel_locns_P9}
shows that the behavior near $P_9$ is accurately depicted, we omit the details for verifying 
that Figure \ref{Figure_1} provides the correct global situation to the right of $P_5$. 
(This requires a straightforward analysis of the polynomial functions $F$ and $G$.) 
However, we record two properties that are useful for determining the flow direction of the original 
similarity ODEs \eq{V_sim2}-\eq{C_sim2}:
(p1): Both $\{F=0\}$ and $\{G=0\}$ tend to infinity with asymptotically constant slopes in the 
4th quadrant, with $\{G=0\}$ everywhere located above $\{F=0\}$; and (p2):
With $C<0$ fixed and $V\to+\infty$, it follows from \eq{F}, \eq{G} that $G(V,C)<0<F(V,C)$.

\begin{figure}
	\centering
	\includegraphics[width=11cm,height=11cm]{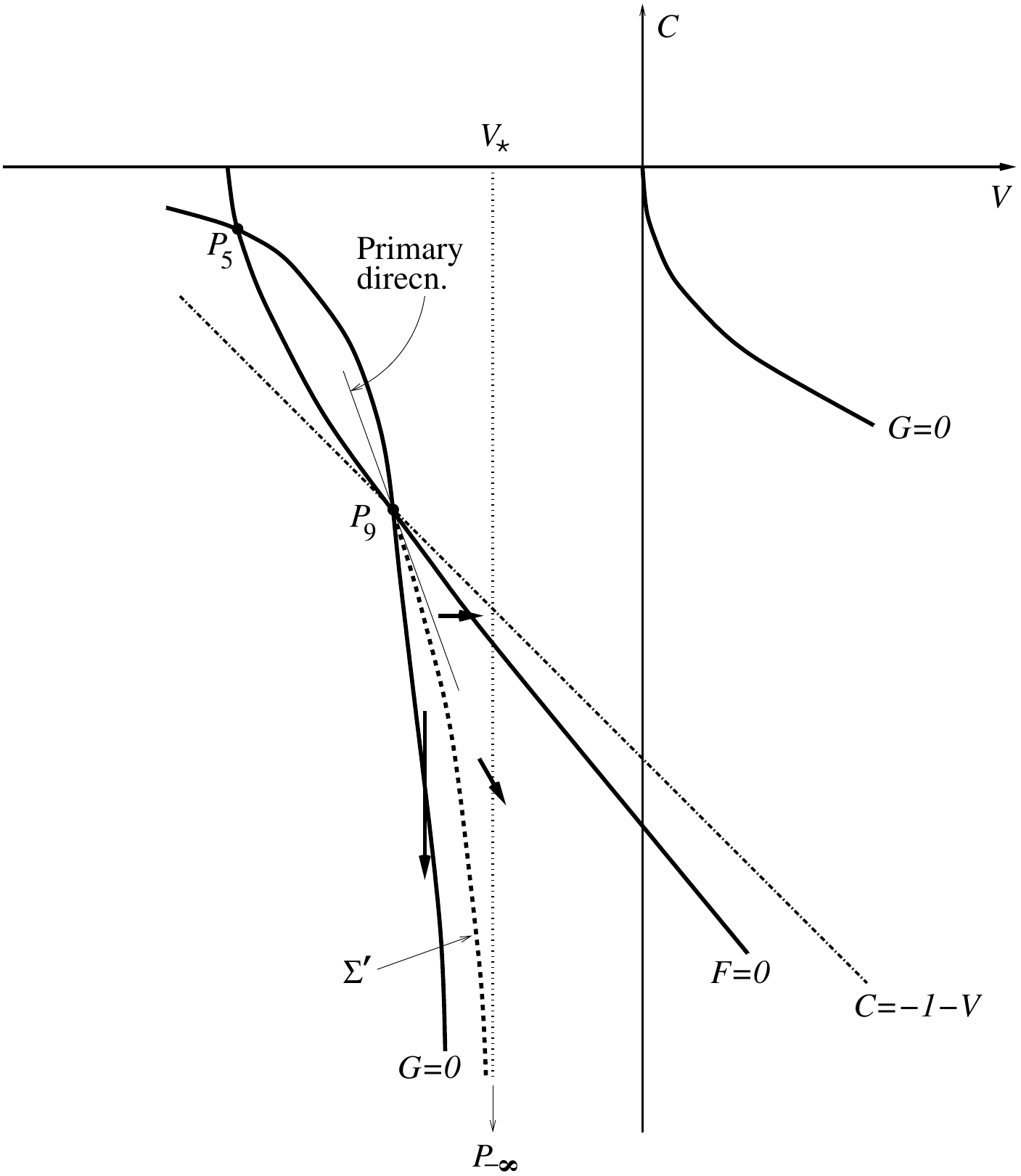}
	\caption{Schematic picture of the relative locations of the zero-level sets 
	of $F$ and $G$ (solid thick curves), the primary direction (solid thin line),
	and the critical line $L_-=\{C=-1-V\}$ (dash-dot) near $P_9$. Also shown is the 
	vertical asymptote $V=V_*$ of the zero-level of $G$ (thin dotted), as well as the 
	unique $P_9P_{-\infty}$-trajectory $\Sigma'$ (thick dotted). The three thick arrows 
	indicate the direction of flow for solutions to the original ODE system 
	\eq{V_sim2}-\eq{C_sim2} as $x>0$ increases.}\label{Figure_1}
\end{figure} 

\subsection{The node-saddle connection $P_9P_{-\infty}$}\label{9-infty_connecn}
Referring again to Figure  \ref{Figure_1}, let $\Omega$ be the unbounded open 
region in the 3rd quadrant of the $(V,C)$-plane which is bounded by the curves 
$\{V=V_*\}$, $\{G=0\}$, and $\{F=0\}$, i.e.,
\[\Omega=\{(V,C)\,|\, V<V_*,\, C<C_9,\, G(V,C)>0,\, F(V,C)<0\}.\]
It is then routine to use two properties (p1)-(p2) above
to verify that trajectories inside $\Omega$ of the original similarity ODEs \eq{V_sim2}-\eq{C_sim2},
move in a south-east manner as $x$ increases. (Recall that $x>0$ along such solutions,
cf.\ the analysis in Section \ref{P1_P3}.) Furthermore, $\Omega$ contains no critical points and,
as indicated by thick arrows in Figure  \ref{Figure_1}, 
any trajectory that meets $\partial\Omega\smallsetminus\{P_9\}$ must exit $\Omega$.
Finally, according to Proposition \ref{rel_prop}, $P_{-\infty}$ is a saddle point and $P_9$
is a node. It follows by continuity that among the infinitely many trajectories entering
$\Omega$ at $P_9$, there is a unique trajectory, denoted $\Gamma'$, which connects
$P_9$ to $P_{-\infty}$.

\subsection{The critical points $P_4$ and $P_5$}\label{P_4_P_5}
The locations of the critical points $P_4$ and $P_5$
are given by Lemma \ref{pres_locn}. To determine their types, we make use 
of the following expressions for the Wronskian at $P_4$ and $P_5$
(cf.\ \eq{W_9_eqn}; for proofs see p.\ 323 in \cite{laz}):
\beq\label{W_5}
	W_4=W_5=W_5(n,\gamma,\lambda)=KC_5^2(V_5-V_7)(V_5-V_9),
\eeq
where the positive constant $K$ is given in \eq{K}. Recall that $V_5=V_4$, 
$V_7=V_-$, and $V_9=V_+$. Therefore, by combining Lemma \ref{pres_locn}
and Lemma \ref{R^2_pos_suff}, we obtain that
$W_4=W_5<0$, so that $P_4$ and $P_5$ are saddle points whenever
$(\gamma,\lambda)$ is relevant.

Summarizing the analysis in Sections \ref{presence_type} and \ref{further_props}, 
we have:
\begin{proposition}\label{locn_of_crit_points}
Assume $n=2$ or $n=3$, $\gamma>1$, $\kappa=\bar\kappa(\gamma,\lambda)$,
and $\lambda>1$ is sufficiently close to $1$. Then:
\begin{itemize}
	\item All of the critical points $P_4$-$P_9$ are present;
	\item $P_8$ and $P_9$ are nodes for \eq{CV_ode};
	\item $P_7$ and $P_9$ are located along $L_-$ within the half-strip $\{(V,C)\,|\,-1<V<V_*,\, C<0\}$;
	\item $P_6$ and $P_8$ are located along $L_+$ within the half-strip $\{(V,C)\,|\,-1<V<V_*,\, C>0\}$;
	\item $P_4$ and $P_5$ are saddles for \eq{CV_ode};
	\item $P_5$ is located within the set $\{(V,C)\,|\,V_-<V<V_+,\, -(1+V)<C<0\}$;
	\item $P_4$ is located within the set $\{(V,C)\,|\,V_-<V<V_+,\, 0<C<1+V\}$;
	\item There is a unique trajectory $\Gamma'$ of the original similarity ODEs 
	\eq{V_sim2}-\eq{C_sim2} which joins $P_9$ to $P_{-\infty}$.
\end{itemize}
\end{proposition}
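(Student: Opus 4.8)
The plan is to assemble this proposition as a summary of the work already done in Sections \ref{presence_type} and \ref{further_props}, adding only the bookkeeping needed to reconcile the various ``$\lambda$ close to $1$'' hypotheses. First I would invoke Proposition \ref{rel_prop}: for $\lambda>1$ sufficiently close to $1$ (depending on $n,\gamma$), the pair $(\gamma,\lambda)$ is relevant, so in particular $1<\lambda<\lambda^\circ_n(\gamma)$ and $R_9^2>0$, hence $W_9>0$ by Lemma \ref{W_9_pos}. With this single hypothesis in force, Lemma \ref{pres_locn} already gives that all of $P_4$--$P_9$ are present, that $-1<V_-,V_4<V_+<V_*<0$, that $P_6,P_8\in L_+$ and $P_7,P_9\in L_-$, and (shrinking the $\lambda$-window once more if necessary) that $V_-<V_4$ and $C_4<1+V_4$. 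The nodality of $P_8,P_9$ is immediate from $R_9^2>0$, $W_9>0$ together with \eqref{W_prod}, and the symmetry $(V,C)\mapsto(V,-C)$ of \eqref{symms} transfers everything from $P_9$ to $P_8$.

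The next step is to translate the inequalities on the $V$-coordinates into the half-strip statements. Since $P_7=(V_-,-C_-)$ and $P_9=(V_+,-C_+)$ with $-1<V_-<V_+<V_*<0$, and $C_\pm>0$, both points lie in $\{-1<V<V_*,\ C<0\}$; the reflected statement gives $P_6,P_8\in\{-1<V<V_*,\ C>0\}$. That $P_7,P_9$ actually lie \emph{on} $L_-$ (and $P_6,P_8$ on $L_+$) is \eqref{on_L_+-}, already recorded in Lemma \ref{pres_locn}. For $P_4,P_5$: Lemma \ref{pres_locn} gives $V_-<V_4<V_+$, and $C_4<1+V_4$ together with $C_4=\sqrt{g(V_4)}>0$ places $P_4=(V_4,C_4)$ in $\{V_-<V<V_+,\ 0<C<1+V\}$, i.e.\ strictly below $L_+$; reflection gives $P_5$ in $\{V_-<V<V_+,\ -(1+V)<C<0\}$. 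The saddle property of $P_4,P_5$ comes from the Lazarus factorization \eqref{W_5}: $W_5=K C_5^2(V_5-V_7)(V_5-V_9)=K C_5^2(V_4-V_-)(V_4-V_+)$, where the first factor is positive, $V_4-V_->0$, and $V_4-V_+<0$, so $W_5<0$; combined with $R_5^2>0$ (which one checks holds for $\lambda\gtrsim1$, as in Lemma \ref{R^2_pos_suff}, or simply notes follows from $W_5<0$ via \eqref{R_sqrd}${}_2$ once $F_C+G_V$ is controlled) this makes $P_4,P_5$ saddles. Finally, the existence and uniqueness of the trajectory $\Gamma'$ joining $P_9$ to $P_{-\infty}$ is exactly the content of Section \ref{9-infty_connecn}, which used properties (p1)--(p2) of $\{F=0\},\{G=0\}$ together with the node/saddle dichotomy of Proposition \ref{rel_prop}; I would simply cite that argument.

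The only real obstacle is the mundane one of \emph{uniformizing the thresholds}: several distinct lemmas (Proposition \ref{rel_prop}, the two displays \eqref{P_4_locn} in Lemma \ref{pres_locn}, Lemma \ref{R^2_pos_suff}) each assert a conclusion ``for $\lambda>1$ sufficiently close to $1$,'' each with its own implicit threshold $\lambda_i(n,\gamma)$. The proof should open by setting $\lambda_0(n,\gamma):=\min_i\lambda_i(n,\gamma)>1$ and assuming $1<\lambda<\lambda_0$, after which every cited conclusion holds simultaneously and the eight bullet points follow by the citations above with no further computation. I expect no genuine difficulty beyond this organizational point; the proof is essentially a three-paragraph assembly of earlier results, and I would keep it short, deferring all inequality verifications to the lemmas where they were already carried out.
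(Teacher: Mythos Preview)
Your proposal is correct and matches the paper's approach exactly: the proposition is stated in the paper as ``Summarizing the analysis in Sections \ref{presence_type} and \ref{further_props}'' with no separate proof, and your assembly of Lemma \ref{pres_locn}, Proposition \ref{rel_prop}, Section \ref{P_4_P_5}, and Section \ref{9-infty_connecn} is precisely that summary. One minor simplification: your parenthetical worry about $R_5^2>0$ is unnecessary, since $W_5<0$ together with \eqref{R_sqrd}${}_2$ gives $R_5^2=(F_C+G_V)^2-4W_5>0$ immediately, with no need to control $F_C+G_V$.
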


\section{Blowup solutions continuous away from the point of collapse}\label{global_flow}
We proceed to argue for the statements in Theorem \ref{thm}.
The main issue is to establish that, for
$n=3$, $\gamma>1$, $\kappa=\bar\kappa(\gamma,\lambda)$, and with $\lambda>1$ 
sufficiently close to $1$, the particular trajectory of \eq{V_sim2}-\eq{C_sim2} which 
passes {\em vertically} through the origin $P_1$ 
in the $(V,C)$-plane, connects the nodes $P_8$ and $P_9$. Thanks to the symmetries 
in \eq{symms}, it suffices to show that its part in the lower half-plane, denoted $\Sigma$, 
connects $P_1$ to $P_9$.

To argue for this, we shall shall show in Section \ref{Sigma} below that, under the stated conditions on $n$, 
$\gamma$, $\kappa$, and $\lambda$, the trajectory $\Sigma$ is located {\em strictly}
between two parabolae (or, rather, parts of parabolae)
\[\Pi_1=\{V=-\beta_1 C^2,\, C_5<C<0\},\qquad 
\Pi_2=\{V=-\beta_2 C^2,\, C_9<C<0\},\] 
in the third quadrant and which satisfy
\beq\label{parabolae_props}
	\beta_1>\beta_2,\qquad P_5\in\Pi_1,\qquad\text{and}\qquad P_9\in\Pi_2.
\eeq
Let $\mathcal R$ denote the region in the 3rd quadrant which is bounded by $\Pi_1$, $\Pi_2$, 
and that part of $\{G=0\}$ which is located between $P_5$ and $P_9$; see Figure \ref{Figure_2}.
It follows from the expressions for $F$, $G$, and $D$ that trajectories of \eq{V_sim2}-\eq{C_sim2}
move in a south-west manner within $\mathcal R$ as $x>0$ increases. 

The argument in Section \ref{Sigma} will establish the following properties: 
\begin{itemize}
	\item[(A)] $\Sigma$ starts out from $P_1$ strictly between $\Pi_1$ and $\Pi_2$;
	\item[(B)] Any trajectory of \eq{V_sim2}-\eq{C_sim2} which meets $\Pi_1$ moves into $\mathcal R$; and
	\item[(C)] Any trajectory of \eq{V_sim2}-\eq{C_sim2} which meets $\Pi_2$ moves into $\mathcal R$.
\end{itemize}
Taking these properties for granted for now, it follows that $\Sigma$ does not exit $\mathcal R$ 
along $\Pi_1$ or $\Pi_2$. It therefore reaches $\{G=0\}$ (vertically) at a point located {\em strictly} 
between $P_5$ and $P_9$, and enters the ``eye-shaped'' region 
\beq\label{eye}
	\mathcal E:=\{(V,C)\,|\, V_5<V<V_9,\, G(V,C)<0<F(V,C)\}.
\eeq
\begin{figure}
	\centering
	\includegraphics[width=11cm,height=11cm]{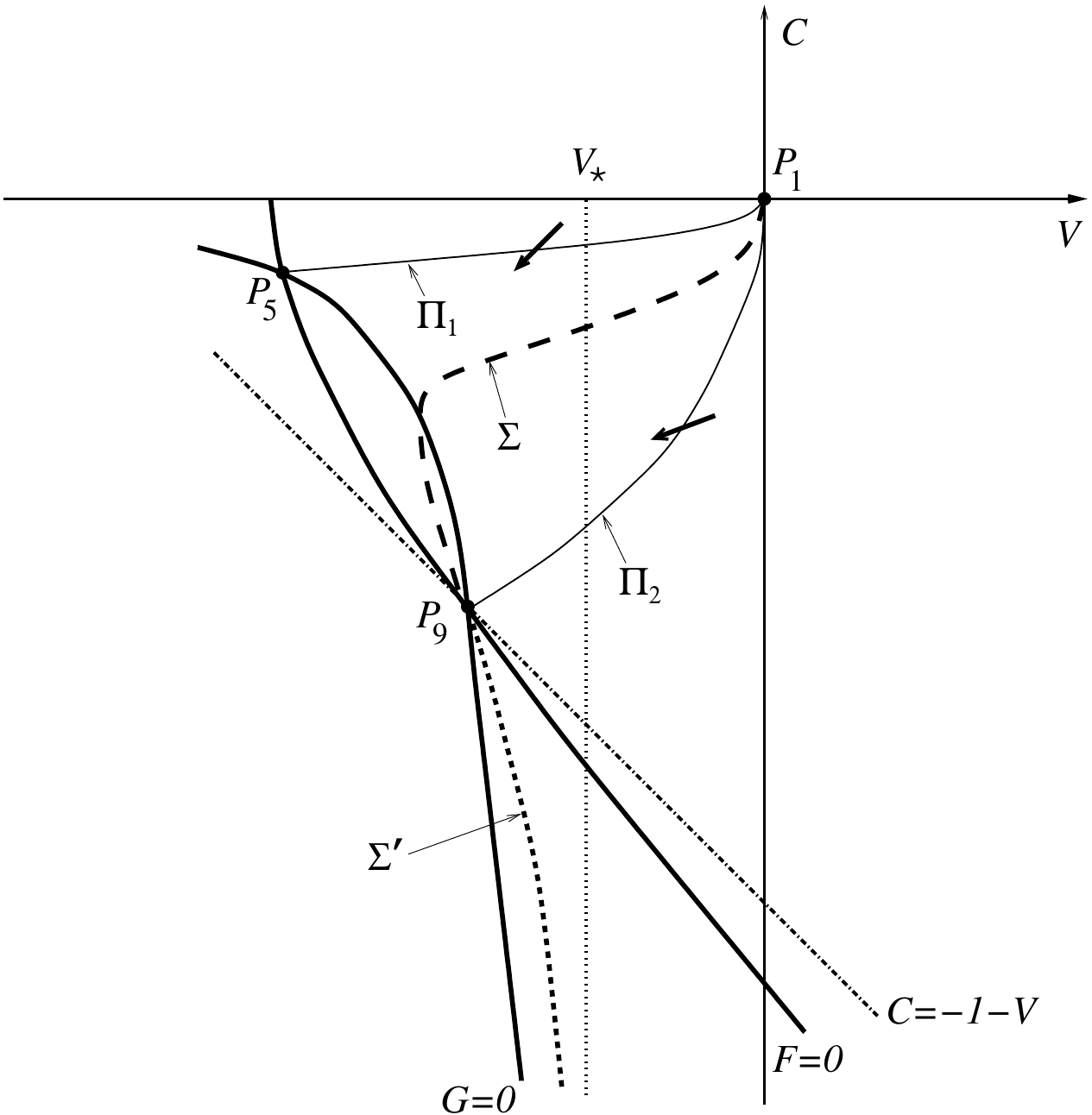}
	\caption{Schematic picture showing the $\Sigma$ trajectory (thick long dashes) which leaves the
	origin $P_1$ vertically and connects $P_1$ to $P_9$. $\Sigma$ is bounded above and below by
	the parabolic barriers $\Pi_1$ and $\Pi_2$ (thin curves), respectively, before entering the eye-shaped region
	between $P_5$ and $P_9$. The two thick arrows 
	indicate how solutions to the original ODE system 
	\eq{V_sim2}-\eq{C_sim2} cross the barriers as $x>0$ increases.}\label{Figure_2}
\end{figure} 
Recalling that $x>0$ along solutions of the similarity ODEs \eq{V_sim2}-\eq{C_sim2}
in the lower half-plane,
it is routine to verify that $\mathcal E$ is a trapping region for the system 
\eq{V_sim2}-\eq{C_sim2}. Any solution of \eq{V_sim2}-\eq{C_sim2} which enters 
$\mathcal E$ must therefore approach the node at $P_9$ as $x$ increases.
Thus, modulo the properties (A), (B), (C), we have that $\Sigma$ connects $P_1$ to $P_9$.

Having reached $P_9$, the trajectory $\Sigma$ is continued as the unique 
node-saddle trajectory $\Gamma'$ connecting $P_9$ to $P_{-\infty}$ 
(cf.\ Section \ref{9-infty_connecn}); see Figure \ref{Figure_2}.

Next, let $\Sigma_\circ$ and $\Sigma'_\circ$ denote the reflections 
of $\Sigma$ and $\Sigma'$, respectively, about the $V$-axis.
With these we define the trajectory
\beq\label{complete_traj}
	\Gamma:=\Sigma'_\circ\cup\Sigma_\circ\cup\Sigma\cup\Sigma'.
\eeq
Recall from Section \ref{P_infs} that approaching 
$P_{+\infty}$ along $\Sigma'_\circ$ corresponds to $x\to-\infty$,
and that approaching $P_{-\infty}$ along $\Sigma'$ corresponds to $x\to+\infty$.
It follows that $\Gamma$ provides a trajectory connecting $P_{+\infty}$ to $P_{-\infty}$
via $P_8$, $P_1$, and $P_9$, and that the corresponding solution $(V(x),C(x))$ of the 
similarity ODEs \eq{V_sim2}-\eq{C_sim2} is defined and {\em continuous}
for all $x\in(-\infty,\infty)$. With $\Gamma$ so constructed we obtain the velocity and 
sound speed of the corresponding Euler flow from \eq{sim_vars}${}_{3,4}$.
\begin{remark}
	We note that, strictly speaking, we have the freedom of
	choosing any positive $x$-value $x_9>0$, say, and then fixing the solution $(V(x),C(x))$ that 
	traverses $\Gamma$ by insisting that $(V(x_9),C(x_9))=P_9$.
	Choosing different values for $x_9$ will give physically distinct Euler flows; however,
	these are all trivially related via space-time scalings.
\end{remark}

To complete the construction of the sought-for solution we need to specify the 
density field. For this we choose any positive constant for the right-hand side of \eq{CR},
fixing the function $R(x)$ along the solution under consideration, and define the 
associated flow variables $(\rho(t,r),u(t,r),c(t,r))$ according to \eq{sim_vars}.
This will complete the construction of a globally defined, radial Euler flow
which, by construction is continuous everywhere except at $(t,r)=(0,0)$, 
where it suffers amplitude blowup (in $\rho$, $u$, $c$, $p$, and $\theta$).
Modulo the properties (A), (B), (C), this establishes parts (i), (ii), and (iii) of Theorem \ref{thm}.
The next section provides the details of proving (A), (B), (C), and part (iv) of Theorem \ref{thm}
is established in Section \ref{part_iv}.

\subsection{Analysis of the $\Sigma$ trajectory}\label{Sigma}
To carry out the analysis outlined above, we first calculate the leading (quadratic) 
order behavior of $\Sigma$ near the origin $P_1$. That is, we determine $\beta>0$
such that 
\[\Sigma=\{V=-\beta C^2+O(C^3)\text{ for $C\lessapprox0$}\}.\]
Until further notice it is assumed that $n=2$ or $n=3$, $\gamma>1$, $\kappa=\bar\kappa$, and $\lambda>1$.
We  consider $V$ as a function of $C$ along $\Sigma$ and use \eq{CV_ode},
in the form $\frac{dV}{dC}=\frac{G(V,C)}{F(V,C)}$, to calculate
\beq\label{2nd_der}
	\textstyle\frac{d^2V}{d^2C}=\frac{G(FG_V-GF_V)+F(FG_C-GF_C)}{F^3}.
\eeq
From \eq{G}-\eq{F}, and with $V\approx-\beta C^2$, we have to leading order along 
$\Sigma$ near $P_1$ that
\begin{align*}
	G&=nC^2(V-V_*)-V(1+V)(\lambda+V)\approx(\beta\lambda-nV_*)C^2\\
	G_V&=nC^2-(1+V)(\lambda+V)-V(\lambda+V)-V(1+V)\approx -\lambda\\
	G_C&=2nC(V-V_*)\approx -2nV_*C\\
	F&=C\big[C^2-k_1(1+V)^2+k_2(1+V)-k_3\big]\approx-\lambda C\\
	F_V&=C[k_2-2k_1(1+V)]\approx(k_2-2k_1)C\\
	F_C&=3C^2-k_1(1+V)^2+k_2(1+V)-k_3\approx -\lambda,
\end{align*}
where we have used \eq{sum_k} and the fact 
that $\alpha=0$ since $\kappa=\bar\kappa$; also, $V_*$ is given by \eq{isntr_V_star}.
Using these expressions in \eq{2nd_der} gives
\[-2\beta=\textstyle\frac{d^2V}{d^2C}\big|_{C=0}=\frac{2nV_*}{\lambda},\]
or
\beq\label{beta}
	\beta=-\textstyle\frac{nV_*}{\lambda}=\frac{2\mu}{\eps\lambda}.
\eeq
Next, the parabola
\[\Pi_1=\{V=-\beta_1 C^2,\, C_5<C<0\}\]
is defined by the requirement that it passes through $P_5$, giving
\beq\label{beta_1}
	\beta_1=-\textstyle\frac{V_5}{C_5^2}=\frac{n(V_*-V_5)}{(1+V_5)(\lambda+V_5)}
	=\frac{2(2+n\eps)}{n\lambda\eps^2},
\eeq
where we have used \eq{g} and \eq{V_4_C_4} (recall that $V_5=V_4$). Similarly, 
the parabola
\[\Pi_2=\{V=-\beta_2 C^2,\, C_9<C<0\}\]
is defined by the requirement that it passes through $P_9$, giving
\beq\label{beta_2}
	\beta_2=-\textstyle\frac{V_9}{C_9^2}=-\frac{V_+}{(1+V_+)^2},
\eeq
where we have used that $P_9\in L_-$, and where $V_9=V_+$ is given by \eq{isntr_V_pm}-\eq{aQ}.
Note that it follows from Proposition \ref{locn_of_crit_points} that $P_5$ is located 
north-west of $P_9$ within the triangle 
\beq\label{triangle}
	\mathcal T:=\{(V,C)\,|\, -1< -1-V<C<0 \}.
\eeq
in the third quadrant (cf.\ Figure \ref{Figure_2}).

As argued above, in order to show that $\Sigma$ meets $\{G=0\}$ between $P_5$ and $P_9$, 
it suffices to establish properties (A), (B), (C), which amount to the following:
\begin{itemize}
	\item[(A)] $\Sigma$ starts out from $P_1$ between $\Pi_1$ and $\Pi_2$, i.e.,
	\beq\label{betas}
		\beta_1>\beta>\beta_2;
	\eeq
	\item[(B)] Whenever a trajectory $(V(C),C)$ of \eq{CV_ode} crosses $\Pi_1$ at a point $(-\beta_1C^2,C)$,
	with $C_5<C<0$, we have
	\beq\label{dV/dC_1}
		\textstyle\frac{dV}{dC}\big|_{\Pi_1}=\frac{G(-\beta_1C^2,C)}{F(-\beta_1C^2,C)}<-2\beta_1 C;
	\eeq
	this guarantees that solutions to \eq{V_sim2}-\eq{C_sim2} cross
	$\Pi_1$ into the region $\mathcal R$ as $x$ increases; 
	\item[(C)] Whenever a trajectory $(V(C),C)$ of \eq{CV_ode} crosses $\Pi_2$ at a point $(-\beta_2C^2,C)$,
	with $C_9<C<0$, we have
	\beq\label{dV/dC_2}
		\textstyle\frac{dV}{dC}\big|_{\Pi_2}=\frac{G(-\beta_2C^2,C)}{F(-\beta_2C^2,C)}>-2\beta_2 C;
	\eeq
	this guarantees that solutions to \eq{V_sim2}-\eq{C_sim2} cross
	$\Pi_2$  into the region $\mathcal R$ as $x$ increases.
\end{itemize}
We proceed to verify that (A), (B), and (C) are all satisfied for $n=3$, $\gamma>1$,
$\kappa=\bar\kappa$, provided $\lambda>1$ is sufficiently close to $1$ (depending on $\gamma$). 
We note that it is only in verifying property (A) that we need to restrict to $n=3$.

\subsubsection{Property (A)}\label{prop_A}
Substituting from \eq{beta} and \eq{beta_1}, and rearranging, we have that $\beta_1>\beta$
amounts to $2+n\eps>\mu n\eps$, which is trivially satisfied whenever $\mu=\lambda-1\gtrapprox0$.

The analysis of the second inequality $\beta>\beta_2$ is more involved, and, as we shall see,
its validity requires $n=3$. Considering $V_*$ and $V_+$ as functions of $\mu$,
we have 
\beq\label{V_star_V_plus}
	V_*=-\textstyle\frac{2\mu}{n\eps}\qquad\text{and}\qquad 
	V_+=-\textstyle\frac{2\mu}{n\eps}+O(\mu^2),
\eeq
where we have used \eq{isntr_V_star} and \eq{isntr_V_pm}-\eq{aQ}. Using these 
to Taylor expand $\beta_2$ in \eq{beta_2} gives
\beq\label{beta_2_expn}
	\beta_2=\textstyle\frac{2}{m\eps}\mu+\textstyle\frac{2(\eps+4)}{m^2\eps^2}\mu^2+O(\mu^3).
\eeq
Also, according to \eq{beta}, we have 
\[\beta=\textstyle\frac{2}{\eps}\mu-\textstyle\frac{2}{\eps}\mu^2+O(\mu^3),\]
It follows from these expressions that the inequality $\beta>\beta_2$ fails for $n=2$ ($m=1$),
while it is satisfied for $n=3$ ($m=2$), when $\mu\gtrapprox0$ (depending on $\eps$, i.e., on $\gamma$).
Consequently, in what follows we restrict attention to $n=3$.

\subsubsection{Property (B)}\label{prop_B}
With $n=3$, $\eps=\gamma-1>0$, and $\kappa=\bar\kappa$ we have
\beq\label{beta_1_V_star_V_5_C_5_n=3}
	\beta_1=\textstyle\frac{2(3\eps+2)}{3\lambda\eps^2},\qquad
	V_*=-\textstyle\frac{2\mu}{3\eps},\qquad
	V_5=V_4=-\textstyle\frac{2\lambda}{3\eps+2},\qquad
	C_5^2=\textstyle\frac{3\lambda^2\eps^2}{(3\eps+2)^2}
\eeq
where we have used \eq{beta_1}, \eq{isntr_V_star}, \eq{V_4_C_4}, and \eq{g}.
Also, from \eq{ks}, we have
\beq\label{ks_n=3}
	k_1=\eps+1,\qquad
	k_2=\textstyle\frac{(2+\mu)\eps-2\mu}{2},\qquad\text{and}\qquad
	k_3=\textstyle\frac{\mu\eps}{2}.
\eeq
As $F>0$ along $\Pi_1$, we have that \eq{dV/dC_1} holds if and only if
\beq\label{aux_1}
	G(-\beta_1C^2,C)<-2\beta_1 CF(-\beta_1C^2,C)\qquad\text{for $C_5<C<0$.}
\eeq
Using the expressions for $F$ and $G$ from \eq{G}-\eq{F}, together with \eq{beta_1_V_star_V_5_C_5_n=3}-\eq{ks_n=3},
rearranging and dividing through by $\beta_1C^2>0$, and finally setting $Z=C^2$, we obtain the equivalent inequality
\beq\label{dV/dC_1_2}
	(2\eps+1)\beta_1^2 Z^2+\left\{1+[(\mu-2)\eps-(\mu+2)]\beta_1\right\}Z
	+(\lambda-\textstyle\frac{2\mu}{\beta_1\eps})<0\qquad\text{for $0<Z<Z_5:=C_5^2$.}
\eeq
We proceed to argue that \eq{dV/dC_1_2} is satisfied whenever $\lambda\gtrapprox1$ (i.e., $\mu\gtrapprox0$).
Denoting the left-hand side of \eq{dV/dC_1_2} by $\phi(Z)$, we have
\[\phi'(Z)=2(2\eps+1)\beta_1^2 Z+1+[(\mu-2)\eps-(\mu+2)]\beta_1.\]
Direct evaluations, using \eq{beta_1_V_star_V_5_C_5_n=3}, then give
\begin{itemize}
	\item $\phi(Z_5)=0$ for any $\lambda>1$ (note that this reflects the fact that \eq{aux_1}, and hence
	\eq{dV/dC_1_2}, is satisfied with equality for $Z=Z_5$ since $F(P_5)=G(P_5)=0$); and
	\item $\phi'(Z_5)=-\textstyle\frac{9\eps+4}{3\eps}+O(\mu)<0$ for $\lambda\gtrapprox1$.
\end{itemize}
As $\phi(Z)$ is a quadratic polynomial in $Z$ with a positive leading coefficient,
it follows that $\phi(Z)>0$ for $0<Z<Z_5$, establishing \eq{dV/dC_1_2}.
This shows that Property (B) holds whenever $\lambda>1$ is 
sufficiently close to $1$ (depending on $\gamma$).

\subsubsection{Property (C)}\label{prop_C}
We assume $n=3$, $\eps=\gamma-1>0$, and $\kappa=\bar\kappa$.
As $F>0$ along $\Pi_2$, we have that \eq{dV/dC_2} holds if and only if
\beq\label{aux_2}
	-2\beta_2 CF(-\beta_2C^2,C)<G(-\beta_2C^2,C)\qquad\text{for $C_9<C<0$.}
\eeq
Using the expressions for $F$ and $G$ from \eq{G}-\eq{F} together with \eq{ks_n=3},
rearranging and dividing through by $\beta_2C^2>0$, and finally setting $Z=C^2$, 
we obtain the equivalent inequality
\beq\label{dV/dC_2_2}
	(2\eps+1)\beta_2^2 Z^2+\left\{1+[(\mu-2)\eps-(\mu+2)]\beta_2\right\}Z
	+(\lambda-\textstyle\frac{2\mu}{\beta_2\eps})<0,\qquad\text{for $0<Z<Z_9:=C_9^2$.}
\eeq
We proceed to argue that \eq{dV/dC_2_2} is satisfied whenever $\lambda\gtrapprox1$.
Denoting the left-hand side of \eq{dV/dC_2_2} by $\psi(Z)$, 
direct evaluations, using \eq{beta_2_expn} give
\begin{itemize}
	\item $\psi(0)=-1+O(\mu)<0$ for $\lambda\gtrapprox1$;
	\item $\psi(Z_9)=0$ for any $\lambda>1$ (note that this reflects the fact that \eq{aux_2}, and hence
	\eq{dV/dC_2_2}, is satisfied with equality for $Z=Z_9$ since $F(P_9)=G(P_9)=0$).
\end{itemize}
As $\psi(Z)$ is a quadratic polynomial in $Z$ with a positive leading coefficient,
it follows that $\psi(Z)<0$ for $0<Z<Z_9$, establishing \eq{dV/dC_2_2}.
This shows that Property (C) holds whenever $\lambda>1$ is sufficiently close 
to $1$ (depending on $\gamma$).

With this we have argued for parts (i), (ii), and (iii) of Theorem \ref{thm}.

\subsection{Proof of part (iv) of Theorem \ref{thm}}\label{part_iv}
With the existence of the continuous solution trajectory $\Gamma$ in \eq{complete_traj} established, 
part (iv) of Theorem \ref{thm} is argued for as follows. 

Recall from the construction above that the trajectory $\Sigma_{89}:=\Sigma\cup\Sigma_\circ$ 
is the unique $P_8P_9$-connection that passes vertically through the origin $P_1$.
For the parameter regime described in Theorem \ref{thm}, it was established above that $\Sigma_{89}$ 
meets $\{G=0\}$ at a point strictly between $P_5$ and $P_9$ in the lower half-plane.
As $\Sigma_\circ$ is the reflection of $\Sigma$ about the $V$-axis, it follows that $\Sigma_{89}$
also meets $\{G=0\}$ at a symmetrically located point located strictly between $P_4$ and $P_8$
in the upper half-plane. It now follows by continuity that there are nearby trajectories connecting the node at 
$P_8$ to the node at $P_9$ via $P_1$. Any such ``perturbed'' $P_8P_9$ trajectory $\tilde\Gamma_{89}$
passes through $P_1$ with a (large) strictly positive or (large) strictly negative slope. We note that 
$\tilde\Gamma_{89}$ must arrive and leave $P_1$ with the same slope; see Section \ref{unphys_kink_soln} below.
With the same notation as in \eq{complete_traj} we therefore have that 
\beq\label{pertrbd_compl_traj}
	\tilde\Gamma:=\Sigma'_\circ\cup\tilde \Gamma_{89}\cup\Sigma'.
\eeq
is the trajectory of a solution $(\tilde V(x),\tilde C(x))$ to the 
similarity ODEs \eq{V_sim2}-\eq{C_sim2} which is defined and continuous
for all $x\in(-\infty,\infty)$. 

Finally, if $\tilde\Gamma_{89}$ passes through $P_1$ with a positive slope $\ell>0$, then 
we are in Case 1 described in Section \ref{P1_P3}, and the fluid is  everywhere flowing away 
from the center of motion at time $t=0$. Similarly, if $\tilde\Gamma_{89}$ passes through 
$P_1$ with a negative slope $\ell<0$, Case 2 in Section \ref{P1_P3} applies, and the fluid 
is everywhere flowing toward the center of motion at time $t=0$. 

This concludes the proof of Theorem \ref{thm}.

\section{Additional remarks}\label{remarks}
\subsection{Blowup with a converging or diverging velocity field}\label{remarks_1}
As noted earlier, since $\lambda>1$ and $\bar\kappa<0$, all the solutions constructed above
suffer blowup of density and pressure at the center of motion at time of collapse. 
Also, in Case 2 above the fluid is everywhere moving toward the origin $r=0$ at time of collapse
(cf.\ discussion in Section \ref{P1_P3}).
It is surprising that, under these circumstances, no shock wave appears in 
the flows under consideration.
We interpret this as demonstrating how compressibility, which 
is typically seen as a necessary ingredient in shock formation, can play a dual role
in preventing shocks from developing. 

Next, consider Case 1: Again the density blows up at $r=0$ at time of collapse, but this 
now occurs while all fluid particles are moving away from the origin. It might seem 
that the latter property should tend to dilute the fluid, and thus prevent blowup of the density
(recall that the density is bounded at any time strictly prior to collapse).
However, what determines the rate of change of density along particle trajectories
(i.e., $\dot\rho=\rho_t+u\rho_r=-\rho u_r$),  is the sign of the velocity gradient $u_r$. 
For the self-similar flows under consideration
\[\dot\rho =-\textstyle\frac{1}{\lambda}r^{\bar\kappa-\lambda}R(x)
\left(\lambda V'(x)-\textstyle\frac{V(x)}{x}\right).\]
In Case 1 we have $\frac{V(x)}{x}\to V'(0)<0$ as $x\to 0$, so that 
\[\dot\rho(0,r)=-\textstyle\frac{1}{\lambda}r^{\bar\kappa-\lambda}R(0)(\lambda-1)V'(0)>0.\]
This shows that all fluid particles undergo an increase in density at time $t=0$ in this case,
and that this increase is more pronounced the closer the particle is to the center of motion.

\subsection{On uniqueness}\label{remarks_2}
As indicated in Section \ref{intro}, it is reasonable to ask if the strong 
singularities displayed by the type of solutions considered above could 
yield examples of non-unique continuation. 

In this section we consider two possible scenarios for non-uniqueness.
While one is easily dispelled with, the other requires more analysis to evaluate.

\subsubsection{Passing through $P_1$ with a kink}\label{unphys_kink_soln}
Recall from Section \ref{P1_P3} that  the origin $P_1$ is a star point 
for the reduced  similarity ODE \eq{CV_ode}, with trajectories approaching with any slope 
$\ell\in[-\infty,+\infty]$. Consider the trajectory $\Gamma$ which was built in 
Section \ref{global_flow}; its $P_8P_9$-part (i.e., $\Sigma_\circ\cup\Sigma$) was 
constructed specifically to pass vertically through the origin $P_1$. Having reached
$P_1$ from $P_8$ along $\Sigma_\circ$, we could try to leave $P_1$ along a 
trajectory $\tilde\Sigma$ which is a slight perturbation of $\Sigma$ - slight enough 
that $\tilde\Sigma$ still connects $P_1$ to $P_9$. With notation as in \eq{complete_traj},
we would thus obtain a complete trajectory 
$\tilde\Gamma:=\Sigma'_\circ\cup\Sigma_\circ\cup\tilde\Sigma\cup\Sigma'$, 
which connects $P_{+\infty}$ to $P_{-\infty}$, via $P_8$ and $P_9$, and 
passes through $P_1$. This would appear to generate an Euler flow which agrees
for $t<0$ with the one corresponding to the trajectory $\Gamma$, but it would differ from 
it for $t>0$.

However, in contrast to $\Gamma$, the trajectory $\tilde\Gamma$ passes through $P_1$ with a kink, 
and this renders the corresponding Euler flow unphysical.
Indeed, since $\tilde\Gamma$ approaches $P_1$ with different slopes 
as $x\to 0\pm$, \eq{well_bhvd} shows that at least one  
of $\frac{V(x)}{x}$ or $\frac{C(x)}{x}$ must suffer a jump as $x$ passes through zero.
According to \eq{sim_vars} this gives an Euler flow which suffers an unphysical 
jump in velocity or in sound speed (and thus in density) at all locations $r>0$ at time $t=0$.

\subsubsection{Non-uniqueness via an outgoing shock?}
The second scenario is more involved. For this we consider using a solution 
of the type described by Theorem \ref{thm} up to time of collapse, but then inserting
an outgoing shock in the flow for $t>0$ and emanating from $(t,r)=(0,0)$. 

For a solution as described by Theorem \ref{thm}, 
consider the point $P(x)=(V(x),C(x))$ as it traverses the $P_1P_9$-trajectory 
$\Sigma$ for $0\leq x\leq x_9$. For each such $x$-value we calculate the point
$P_H(x)=(V_H(x),C_H(x))$ with the property that the physical states corresponding to 
$P(x)$ and $P_H(x)$ satisfy the Rankine-Hugoniot relations for an admissible 3-shock.
We refer to 
\[\Sigma_H:=\{(V_H(x),C_H(x))\,|\, 0< x< x_9\}\] 
as the Hugoniot locus of $\Sigma$. (It follows from the Rankine-Hugoniot conditions 
and admissibility that $\Sigma_H$ is a uniquely determined and differentiable curve
located below the critical line $L_-$; see \cites{laz,jls1}. We omit the details.)

If the curve $\Sigma_H$ intersects the $P_9P_{-\infty}$-trajectory 
$\Sigma'$ constructed earlier, for an $x$-value $x_s\in(0,x_9)$, then we would 
have an example of non-unique continuation for the Euler system. 
Specifically, one solution would be the earlier constructed flow described by 
Theorem \ref{thm} which moves along the trajectory $\Gamma$. 
The other solution would be constructed by using the part of $\Gamma$ corresponding to 
$x<x_s$, then jumping to the point $P_H(x_s)\in\Sigma'$, and finally moving along $\Sigma'$ as
$x$ increases from $x_s$ to $+\infty$. The two corresponding Euler 
flows would agree up to time of collapse,
but differ for $t>0$ as one contains a shock and the other does not.

To determine whether the Hugoniot locus $\Sigma_H$ can in fact intersect the trajectory 
$\Sigma'$ turns out to be somewhat subtle. What we {\em can} say is the following.
First, it follows from the Rankine-Hugoniot conditions that if $\mathcal C$ is a 
differentiable curve (a trajectory of \eq{CV_ode} or not) which tends to a point $\bar P$ on 
the critical line $L_-$, then its Hugoniot locus approaches the same point. 
A further analysis shows that if $\mathcal C$ approaches $\bar P$ with slope $\sigma$, 
then its Hugoniot locus tends to $\bar P$ with the slope 
\beq\label{sigma_H}
	\sigma_H(\sigma):=\textstyle\frac{\gamma-1}{2}+(\gamma+1)
	\textstyle\frac{\left(\sigma-\frac{\gamma-1}{2}\right)}{(\gamma-3-4\sigma)}.
\eeq
(In particular, if $\mathcal C$ is a trajectory of \eq{CV_ode} which approaches 
$L_-$ at a non-triple point, then \eq{non_obvious_reln} shows that it does so with 
slope $\sigma=\frac{\gamma-1}{2}$, and it follows from \eq{sigma_H} that $\sigma_H=\sigma$ in this case.)
Applying \eq{sigma_H} to $\mathcal C=\Sigma$, which approaches the triple point $\bar P=P_9\in L_-$
with the primary slope $L_1$ (cf.\ Lemma \ref{prim_sec_direcns}), we get that 
$\Sigma_H$ approaches $P_9$ with slope $\sigma_H(L_1)$. 
According to Lemma \ref{rel_locns_P9} we have $L_1<-1$ for the solutions under consideration,
which, according to \eq{sigma_H}, gives $\sigma_H(L_1)>-1$. This implies that, at least near $P_9$, the Hugoniot locus 
$\Sigma_H$ is located within $\{V<V_9\}$, and thus does {\em not} meet $\Sigma'$,
which is located within the strip $\{V_9<V<V_*\}$. 

This indicates that, in the isentropic setting under consideration, the 
flows described by Theorem \ref{thm} can {\em not} be ``tweaked'' to give 
examples of non-unique continuation for the compressible Euler system. However, 
we have not been able to settle this issue, e.g.\ by showing that the entire Hugoniot 
locus $\Sigma_H$ is located within $\{V<V_9\}$ (as indicated by numerical tests). 
It would be of interest to provide a conclusive answer; the recent detailed analysis in
\cite{jls1} might be of help with this.

\begin{bibdiv}
\begin{biblist}
\bib{bk}{article}{ 
   author={Brushlinskii, K. V.},
   author={Kazhdan, Ya. M.},
   title={On auto-models in the solution of certain problems of gas dynamics},
   journal={Russian Math. Surveys},
   volume={18},
   date={1963},
   pages={1--22},
}
\bib{bcg}{article}{
   author={Buckmaster, Tristan},
   author={Cao-Labora, Gonzalo},
   author={G\'omez-Serrano, Javier},
   title={Smooth self-similar imploding profiles to 3D compressible Euler},
   journal={Quart. Appl. Math.},
   volume={81},
   date={2023},
   number={3},
   pages={517--532},
   issn={0033-569X},
   review={\MR{4623212}},
   doi={10.1090/qam/1661},
}
\bib{ccsv}{article}{
   author={Chen, Jiajie},
   author={Cialdea, Giorgio},
   author={Shkoller, Steve},
   author={Vicol, Vlad},
   title={Vorticity blowup in 2D compressible Euler equations},
   journal={arXiv:2407.06455},
   date={2024},
}
\bib{cf}{book}{
   author={Courant, R.},
   author={Friedrichs, K. O.},
   title={Supersonic flow and shock waves},
   note={Reprinting of the 1948 original;
   Applied Mathematical Sciences, Vol. 21},
   publisher={Springer-Verlag},
   place={New York},
   date={1976},
   pages={xvi+464},
   review={\MR{0421279 (54 \#9284)}},
}
\bib{gud}{article}{
   author={Guderley, G.},
   title={Starke kugelige und zylindrische Verdichtungsst\"{o}sse in der N\"{a}he
   des Kugelmittelpunktes bzw. der Zylinderachse},
   language={German},
   journal={Luftfahrtforschung},
   volume={19},
   date={1942},
   pages={302--311},
   review={\MR{0008522}},
}
\bib{hart}{book}{
   author={Hartman, Philip},
   title={Ordinary differential equations},
   series={Classics in Applied Mathematics},
   volume={38},
   note={Corrected reprint of the second (1982) edition [Birkh\"auser,
   Boston, MA; MR0658490 (83e:34002)];
   With a foreword by Peter Bates},
   publisher={Society for Industrial and Applied Mathematics (SIAM),
   Philadelphia, PA},
   date={2002},
   pages={xx+612},
   isbn={0-89871-510-5},
   review={\MR{1929104}},
   doi={10.1137/1.9780898719222},
}
\bib{hun_60}{article}{
   author={Hunter, C.},
   title={On the collapse of an empty cavity in water},
   journal={J. Fluid Mech.},
   volume={8},
   date={1960},
   pages={241--263},
}
\bib{jls1}{article}{
   author={Jang, Juhi},
   author={Liu, Jiaqi},
   author={Schrecker, Matthew},
   title={Converging/diverging self-similar shock waves: from collapse to reflection},
   journal={SIAM J. Math Analysis, to appear},
   date={2024},
}
\bib{jj}{article}{
   author={Jenssen, Helge Kristian},
   author={Johnson, Alexander},
   title={Gradient Blowup Without Shock Formation in Compressible Euler Flow},
   journal={Physics of Fluids},
   volume={36},
   date={2024},
   pages={026125},
   doi={10.1063/5.0185592}
}
\bib{jt1}{article}{
   author={Jenssen, Helge Kristian},
   author={Tsikkou, Charis},
   title={On similarity flows for the compressible Euler system},
   journal={J. Math. Phys.},
   volume={59},
   date={2018},
   number={12},
   pages={121507, 25},
   issn={0022-2488},
   review={\MR{3894017}},
   doi={10.1063/1.5049093},
}
\bib{jt2}{article}{
   author={Jenssen, Helge Kristian},
   author={Tsikkou, Charis},
   title={Multi-d isothermal Euler flow: existence of unbounded radial
   similarity solutions},
   journal={Phys. D},
   volume={410},
   date={2020},
   pages={132511, 14},
   issn={0167-2789},
   review={\MR{4091348}},
   doi={10.1016/j.physd.2020.132511},
}
\bib{jt3}{article}{
   author={Jenssen, Helge Kristian},
   author={Tsikkou, Charis},
   title={Amplitude blowup in radial isentropic Euler flow},
   journal={SIAM J. Appl. Math.},
   volume={80},
   date={2020},
   number={6},
   pages={2472--2495},
   issn={0036-1399},
   review={\MR{4181105}},
   doi={10.1137/20M1340241},
}
\bib{jt4}{article}{
   author={Jenssen, Helge Kristian},
   author={Tsikkou, Charis},
   title={Radially symmetric non-isentropic Euler lows: Continuous blowup
	with positive pressure},
   journal={Phys. Fluids},
   volume={35},
   date={2023},
   pages={016117}
}
\bib{laz}{article}{
   author={Lazarus, Roger B.},
   title={Self-similar solutions for converging shocks and collapsing
   cavities},
   journal={SIAM J. Numer. Anal.},
   volume={18},
   date={1981},
   number={2},
   pages={316--371},
}
\bib{mrrs1}{article}{
   author={Merle, Frank},
   author={Rapha\"el, Pierre},
   author={Rodnianski, Igor},
   author={Szeftel, Jeremie},
   title={On the implosion of a compressible fluid I: Smooth self-similar
   inviscid profiles},
   journal={Ann. of Math. (2)},
   volume={196},
   date={2022},
   number={2},
   pages={567--778},
   issn={0003-486X},
   review={\MR{4445442}},
   doi={10.4007/annals.2022.196.2.3},
}
\bib{mrrs2}{article}{
   author={Merle, Frank},
   author={Rapha\"el, Pierre},
   author={Rodnianski, Igor},
   author={Szeftel, Jeremie},
   title={On the implosion of a compressible fluid II: Singularity
   formation},
   journal={Ann. of Math. (2)},
   volume={196},
   date={2022},
   number={2},
   pages={779--889},
   issn={0003-486X},
   review={\MR{4445443}},
   doi={10.4007/annals.2022.196.2.4},
}
\bib{rj}{book}{
   author={Ro\v zdestvenski\u i, B. L.},
   author={Janenko, N. N.},
   title={Systems of quasilinear equations and their applications to gas
   dynamics},
   series={Translations of Mathematical Monographs},
   volume={55},
   edition={Russian edition},
   publisher={American Mathematical Society, Providence, RI},
   date={1983},
   pages={xx+676},
   isbn={0-8218-4509-8},
   review={\MR{0694243}},
   doi={10.1090/mmono/055},
}
\bib{sed}{book}{
   author={Sedov, L. I.},
   title={Similarity and dimensional methods in mechanics},
   note={Translated from the Russian by V. I. Kisin},
   publisher={``Mir'', Moscow},
   date={1982},
   pages={424},
   review={\MR{693457}},
}
\bib{stan}{book}{
   author={Stanyukovich, K. P.},
   title={Unsteady motion of continuous media},
   series={Translation edited by Maurice Holt; literal translation by J.
   George Adashko},
   publisher={Pergamon Press, New York-London-Oxford-Paris},
   date={1960},
   pages={xiii+745},
   review={\MR{0114423}},
}
\end{biblist}
\end{bibdiv}

\end{document}